\definecolor{my-link}{rgb}{0.5,0.0,0.0}
\definecolor{my-blue}{rgb}{0.0,0.0,0.6}
\definecolor{my-red}{rgb}{0.5,0.0,0.0}
\definecolor{my-green}{rgb}{0.0,0.5,0.0}
\newtheorem{theorem}{\sc Theorem}[section]
\newtheorem{lemma}[theorem]{\sc Lemma}
\newtheorem{proposition}[theorem]{\sc Proposition}
\newtheorem{corollary}[theorem]{\sc Corollary}
\numberwithin{equation}{section}
\newcommand{\be}{\begin{equation}}
\newcommand{\ee}{\end{equation}}
\newcommand{\nn}{\nonumber}
\providecommand{\abs}[1]{\vert#1\vert}
\providecommand{\norm}[1]{\Vert#1\Vert}
\newcommand{\fl}[1]{\lfloor{#1}\rfloor}
\def\E{\bE}
\def\P{\bP} 
\def\wlev{\omvec} 
\def\shift{\theta}  
\def\jump{v} 
\def\flux{H}   
\def\cF{\mathcal{F}}
\def\cN{\mathcal{N}}
\def\cS{\mathcal{S}}
\def\cI{\mathcal{I}}
\def\kS{\mathfrak{S}}
\def\bE{\mathbb{E}}  \def\E{\mathbb{E}}
\def\bN{\mathbb{N}}
\def\bP{\mathbb{P}}
\def\N{\mathbb{N}}
\def\P{\mathbb{P}}
\def\bR{\mathbb{R}}   \def\R{\mathbb{R}}
\def\bZ{\mathbb{Z}} \def\Z{\mathbb{Z}}
\def\Zb{\mathbb{Z}}
  \def\Zvec{\mathbf{Z}} \def\thvec{\boldsymbol{\theta}}
\def\w{\omega}
\def\om{\omega}
\def\omvec{\bar\omega}
\def\e{\varepsilon}
\def\ind{\mathbf{1}}
\def\ddd{\displaystyle} 
\def\mE{\mathbf{E}}   
\def\mP{\mathbf{P}}
\def\m1{\mathbf{1}}
\font \mymathbb = bbold10 at 11pt
\newcommand{\one}{\mbox{\mymathbb{1}}}
\def \sqn{\sqrt{n}}
  \def\wt{\widetilde} 
\newcommand\abullet{{\scaleobj{0.6}{\bullet}}}  
\DeclareMathOperator{\Var}{Var}   \DeclareMathOperator{\Cov}{Cov}  
 \def\Vvv{{\rm\mathbb{V}ar}}  \def\Cvv{{\rm\mathbb{C}ov}} 
 \def\hfun{h}
 \def\cJ{\mathcal{J}}  \def\wt{\widetilde}
 \def\qvar{\sigma_0}
  \def\chfn{\lambda}  
 \def\bT{\mathbb{T}} 
   \def\qhom{\bar q}   
 \def\chfnhom{\bar\lambda}   
  \def\Yhom{\bar Y}   
  \def\Phom{\bar P}    \def\Ehom{\bar E}   
 \def\Ghom{\bar G}    
 \def\tauhom{\bar\tau}   
  \def\ahom{\bar a}   
  \def\Fhom{\bar F}  \def\ghom{\bar g}
\def\pss{\Gamma}  
\begin{document}

\title[Particles in a dynamic environment]
{
Independent particles 
in a dynamical \\ random environment 
}
\author[M.~Joseph]{Mathew Joseph}
\address{Mathew Joseph\\ Indian Statistical Institute\\ 
Bangalore, 8th Mile Mysore Road, RVCE  Post\\ 
Bengaluru, Karnataka 560059\\ India.}
\email{m.joseph@isibang.ac.in}
\author[F.~Rassoul-Agha]{Firas Rassoul-Agha}
\address{Firas Rassoul-Agha\\ University of Utah\\ 
Department of Mathematics\\ 155 South 1400 East\\  
Salt Lake City, UT 84112\\ USA.}
\email{firas@math.utah.edu}
\urladdr{http://www.math.utah.edu/~firas}
\thanks{F.\ Rassoul-Agha was partially supported by NSF grant DMS-1407574 and Simons Foundation grant 306576.}
\thanks{M.\ Joseph and F.\ Rassoul-Agha were partially supported by 
NSF grant DMS-0747758.} 
\author[T.~Sepp\"al\"ainen]{Timo Sepp\"al\"ainen}
\address{Timo Sepp\"al\"ainen\\ University of Wisconsin-Madison\\ 
Mathematics Department\\ Van Vleck Hall\\ 480 Lincoln Dr.\\  
Madison WI 53706-1388\\ USA.}
\email{seppalai@math.wisc.edu}
\urladdr{http://www.math.wisc.edu/~seppalai}
\thanks{T.\ Sepp\"al\"ainen was partially supported by 
NSF grants DMS-0701091,  DMS-1003651, DMS-1306777 and DMS-1602486,   by Simons Foundation grant 338287, 
 and by the Wisconsin Alumni Research
Foundation.} 
\keywords{random walk in random environment, particle current, 
limit distribution, fractional Brownian motion, EW universality}
\subjclass[2000]{60K35, 60K37} 
\let\thefootnote\relax\footnotetext{{\it Received:} December 1, 2017. {\it Revised:} September 19, 2018.}
\begin{abstract} We study the motion of independent particles in a 
dynamical random environment on the 
integer lattice.  The environment has a product distribution.
 For the multidimensional case, we characterize the class of spatially ergodic invariant measures.
These invariant distributions are mixtures of inhomogeneous Poisson
product measures that depend on the past of the environment. We also 
investigate the correlations in this measure. For dimensions one and two, we prove convergence to equilibrium from spatially ergodic initial distributions.  
In the one-dimensional situation we study fluctuations of the net current seen by an observer
traveling at a deterministic speed.   When this current is centered by its quenched mean
its   limit  distributions are the same 
as for classical independent particles. 
\end{abstract}
\maketitle

\section{Introduction and results}
This paper studies particles that move 
on the integer lattice $\bZ^d$.  Particles interact through
a common environment that specifies their transition 
probabilities in space and time. 
The environment is picked
 randomly at the outset and fixed for all time.
  Given the environment,
 particles evolve independently, governed by 
the transition probabilities specified by the 
environment. 

We have two types of results.  First we characterize those
invariant distributions for the particle process that
satisfy a spatial translation invariance. These turn out
to be mixtures of inhomogeneous Poisson  product measures that 
depend on the past of the environment.
Poisson is expected,
 in view of the classical result 
that a system of
independent random walks has  a homogeneous
Poisson invariant distribution \cite[Section VIII.5]{doob}.
For $d=1,\, 2$, we use coupling ideas from \cite{ekha-gray}
(as presented  in \cite{sepp-exclbook})
to  prove convergence 
to this equilibrium from spatially invariant initial
distributions. 

In the one-dimensional case 
we study   fluctuations of the particle
current seen by an observer moving at the characteristic
speed.   
 In the present setting the characteristic speed
is simply the mean speed $v$ of the particles.  More generally,
the characteristic
speed  is the derivative $\flux'(\rho)$ of the flux $\flux$ as a function
of particle density $\rho$.  The flux $\flux(\rho)$ is the
mean rate of flow across a fixed bond of the lattice
when the system is stationary with density $\rho$.
For independent particles $\flux(\rho)=v\rho$. 

It is expected, and supported
by known rigorous results,   that the current
fluctuations   are of order $n^{{1}/{4}}$ with Gaussian
limits if the macroscopic flux $\flux$ is linear, 
and of order $n^{{1}/{3}}$ with Tracy-Widom type
 limits  if the flux $\flux$  is 
strictly convex or concave.  In statistical physics terminology, the
former is the Edwards-Wilkinson (EW) universality class, and the latter the  
Kardar-Parisi-Zhang (KPZ) universality class. (See \cite{bara-stan} for the
physics perspective on these matters, and \cite{corw-12-rev, sepp-10-ens} for
mathematical reviews.)  
Our motivation is  to investigate the effect of a random environment
 in the EW class.
We find   that,   when the current is centered by its quenched mean and
the environment 
is averaged out,  the fluctuation picture in the 
dynamical environment is the same as that for 
classical independent random walks  \cite{kuma-08, sepp-rw}.  
Consistent with  EW universality,  the current fluctuations  have magnitude
$t^{1/4}$ and occur on a spatial scale of  $t^{1/2}$ where $t$ denotes the 
macroscopic time variable.

There is an interesting contrast with the case of static environment
investigated in  \cite{pete-sepp-10}.  In the static environment, the quenched mean
of the current has fluctuations of magnitude  $t^{1/2}$ and  converges weakly to a Brownian motion.  
Our results suggest that under a dynamic environment  the quenched mean
of the current has fluctuations of magnitude  $t^{1/4}$ and that when the particle system is stationary in time these fluctuations 
are governed by a fractional Brownian motion with Hurst parameter $1/4$.  

Other work on the motion of  independent particles in a random environment includes articles \cite{pete-10,pete-jara-17}.

We turn to a  description of the process and then the results.

\subsection{The particle process and its invariant distributions} 

The particles follow independent random walks in a common dynamical random environment (RWRE). More precisely, they move in a space-time environment 
$\om=(\om_{x,s})_{(x,s)\in \bZ^d\times\bZ}$
indexed by a discrete time variable $s$ 
and a discrete space variable $x$. 
The environment at space-time point $(x,s)\in \bZ^d\times\bZ$ 
is a vector 
$\om_{x,s}=(\om_{x,s}(z):z\in\bZ^d, \, \abs{z}\le R)$ of jump probabilities that satisfy
\be
0\le \om_{x,s}(z)\le 1 \quad\text{and}\quad   
\sum_{z\in\bZ^d\,:\,\abs{z}\le R} \om_{x,s}(z)=1. \label{step-bd}\ee
$R$ is a fixed finite constant that specifies the range of jumps. 
From a space-time point $(x,s)$ admissible jumps are to 
points $(y,s+1)$ such that $ \abs{y-x}\le R$. 
In environment $\om$  the 
   transition probabilities governing the motion of a $\bZ^d$-valued walk
$X_\abullet=(X_s)_{s\in\bZ_+}$ are 
 \be
P^\om[ X_{s+1}=y \,\vert\, X_s=x]= \pi^\om_{s,s+1}(x,y)
\equiv \w_{x,s}(y-x). \label{trans}\ee 
$P^\om$ is the quenched probability measure on the path space
of the walk $X_\abullet$. 
The environment is ``dynamical'' because
at each time $s$  the particle sees a new environment 
$\wlev_s=(\om_{x,s}:x\in\bZ^d)$. 

 $(\Omega, \kS)$ denotes the space of environments $\om$ satisfying the above
 assumptions,   
endowed with the product topology and its Borel $\sigma$-algebra $\kS$.  
The environment restricted to levels  $s\in\{m,\dotsc,n\}$
is denoted by 
\[ \wlev_{m,n}=(\wlev_s)_{m\le s\le n}
=(\om_{x,s}:m\le s\le n, x\in\bZ^d).\]  
Environments at levels generate $\sigma$-algebras
$\kS_{m,n}=\sigma\{\wlev_{m,n}\}$.
In these formulations $m=-\infty$ or $n=\infty$ are also
possible.  
 $T_{x,s}$ is the  shift on $\Omega$, that is 
$(T_{x,s}\om)_{y,t}=\om_{x+y,s+t}$. 

Let $\bP$ be a probability measure on $\Omega$ such that 
\be
\text{the probability vectors $(\om_{x,s})_{(x,s)\in \bZ^d\times\bZ}$ 
are i.i.d.~under $\bP$.}
\label{Pass}\ee

We make two nondegeneracy assumptions. The first one guarantees
that the quenched walk is not degenerate: 
\be
\bP\{ \,\exists\,z\in\bZ^d: \,0<\om_{0,0}(z)<1\,\}>0. 
\label{ass:q00}\ee
Denote the mean transition
 kernel by $p(u)=\E\pi_{s,s+1}(x,x+u)$. 
The second key assumption is that 
\be\begin{array}{l}
\text{there does not exist $x\in\bZ^d$ and an additive   subgroup}\\
\text{$\mathbb G\subsetneq\bZ^d$ such that $\sum_{z\in \mathbb G}p(x+z)=1$.}\\
\end{array}
\label{Rass}\ee
Another way to state assumption \eqref{Rass} is that the averaged walk 
has span 1, or that it is aperiodic in Spitzer's \cite{spitzer} terminology.
 
 To create a system of particles, let 
$\{X^{u,j}_\abullet: u\in\bZ^d, \,j\in\bN\}$ denote  a collection 
of random walks on $\bZ^d$ 
 such that walk $X^{u,j}_\abullet$ starts at site
$u$:  $X^{u,j}_0=u$. When the environment
$\om$ is fixed,  we use $P^\om$ to denote the joint quenched measure of
the walks $\{X^{u,j}_\abullet\}$. 
 Under   $P^\om$ these  walks move independently
on $\bZ^d$ and  each walk  
 obeys transitions  \eqref{trans}.

Further,   assume given an initial
configuration $\eta=(\eta(u))_{u\in\bZ^d}$ of occupation
variables.  Variable $\eta(u)\in\bZ_+$ specifies the number
of particles initially at site $u$.  
$P^\om_\eta$ denotes the quenched distribution of the walks
$\{X^{u,j}_\abullet: u\in\bZ^d, \,1\le j\le\eta(u)\}$.
Occupation variables for all times $s\in\bZ_+$ are then defined
by
\[
\eta_s(x)= \sum_{u\in\bZ^d} \sum_{j=1}^{\eta(u)} \ind\{X^{u,j}_s=x\},
\qquad  (x,s)\in\bZ^d\times\bZ_+. \]
When the initial configuration $\eta=\eta_0$ has
probability distribution $\nu$ we write 
$P^\om_\nu(\cdot)=\int P^\om_\eta(\cdot)\,\nu(d\eta)$ for the quenched  distribution of the process.  

When the environment is averaged over we drop the superscript 
$\om$: for any event $A$ that involves the walks and occupation
variables, and any event $B\subseteq\Omega$, 
$P_\nu(A\times B)=\int_B P^\om_\nu(A)\, \bP(d\om).$   

It will be convenient to construct 
  initial distributions $\nu=\nu^\om$ as functions of  the environment, so that the quenched process distribution is then $P^\om_{\nu^\om}(\cdot)=\int P^\om_\eta(\cdot)\,\nu^\om(d\eta)$.  But then it will always be the
case that $\nu^\om$ depends only on the {\sl past}  $\wlev_{-\infty,-1}$ 
of the environment.  Consequently the initial distribution $\nu^\om$ and
the quenched distribution of the walks $P^\om( \{X^{x,j}_\abullet\}\in\cdot\,)$ are
independent under the product measure  $\P$ on the environment.   The averaged process distribution is then  
\[ \int_\Omega P^\om_{\nu^\om}(\cdot )\, \bP(d\om)
= \int_{\Z_+^{\Z^d}} P_\eta(\cdot)\,\bar\nu(d\eta) = P_{\bar\nu}(\cdot)  \]
where  $\bar\nu(d\eta)=\int_\Omega \nu^\om(d\eta)\, \bP(d\om)$ is the averaged initial distribution.    In particular, in both quenched and  averaged sense,   the initial occupation variables $\{\eta_0(x)\}$ are
independent of the walks $\{X^{x,j}_\abullet\}$.

\medskip

The first result describes the invariant distributions
of the occupation process $\eta_t=(\eta_t(x))_{x\in\bZ^d}$. 
The starting point is an invariant distribution for 
the environment process seen by a tagged particle:
this is the  process $T_{X_n,n}\om$ where  
 $X_\abullet$ denotes a walk that starts at the origin.
A familiar martingale argument and Green function bounds (Proposition
\ref{finvprop} in Section \ref{inv} below) 
 show the existence of an $\kS_{-\infty,-1}$-measurable 
 density function $f$ on $\Omega$ such that  $\E(f)=1$, $\E(f^2)<\infty$, and 
the probability  measure $\bP_\infty(d\om)=f(\om)\,\bP(d\om)$
is invariant for the Markov chain $T_{X_n,n}\om$.

For $0\le \lambda<\infty$ let $\pss^\lambda$ 
denote the mean $\lambda$ 
Poisson distribution on $\bZ_+$. For $0\le \rho<\infty$
and  $\om\in\Omega$
define the following inhomogeneous Poisson product 
 probability distribution on particle
configurations $\eta=(\eta(x))_{x\in\bZ^d}$:
\be
\mu^{\rho,\om}(d\eta)=\bigotimes_{x\in\bZ^d} \pss^{\rho f(T_{x,0}\om)}\big(d\eta(x)\big). 
\label{def:muom}\ee
(Such a measure is called a Cox process with random intensity $\rho f(T_{x,0}\om)$.)
Define the averaged measure by 
\be
\mu^{\rho}=\int \mu^{\rho,\om}\,\P(d\om).
\label{defmuu}\ee

\begin{theorem} Let the dimension $d\ge 1$.
Consider independent particles on $\bZ^d$  in an i.i.d.~space-time environment 
{\rm (}as indicated in  {\rm\eqref{Pass}}{\rm)},
with bounded jumps, under assumptions {\rm\eqref{ass:q00}} and {\rm\eqref{Rass}}.

{\rm (a)}   For each $0\le \rho<\infty$, $\mu^{\rho}$ is the
unique invariant distribution for the process $\eta_\abullet$ 
that is also invariant and ergodic under spatial translations
and has mean occupation $\int\eta(x)\,d\mu^{\rho}=\rho$. 
Furthermore,  the tail $\sigma$-field of the state space $\bZ_+^{\bZ^d}$ is trivial under $\mu^\rho$,
and under the path measure $P_{\mu^\rho}$  the process $\eta_\abullet$ is ergodic
under  time shifts. 

{\rm (b)} Suppose $d=1$ or $d=2$. Let 
 $\nu$ be a probability distribution on 
$\bZ_+^{\bZ^d}$ that is stationary and ergodic under spatial translations and has mean occupation $\rho=\int \eta(x)\,d\nu$.  Then if $\nu$ is the
 initial distribution for the process $\eta_\abullet$,     the process converges in
distribution to the invariant distribution with density $\rho$:  
 $P_\nu\{\eta_t\in\cdot\} \Rightarrow \mu^{\rho}$ as $t\to\infty$. 
\label{invthm}\end{theorem}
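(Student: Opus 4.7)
My plan splits along the two parts. For (a) I verify invariance of $\mu^\rho$ directly from the Poisson structure, then classify spatially ergodic invariant measures with density $\rho$ via a harmonic-function argument, and deduce the ergodic properties. For (b) I implement a coupling of the Ekhaus--Gray type, in which the recurrence/mixing of the quenched walk available in $d \le 2$ supplies the cancellation of discrepancies.

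\textbf{Invariance of $\mu^\rho$ and ergodic properties.} Fix $\om$. Under $\mu^{\rho,\om}$ the occupation variables form a Poisson product with intensity $\lambda(x,\om)=\rho f(T_{0,x}\om)$, and each particle moves independently with kernel $\om_{0,x}(\cdot)$. Poisson superposition and thinning give that, conditional on $\om$, $\eta_1$ is again Poisson product with intensity $\lambda'(y,\om)=\rho\sum_x f(T_{0,x}\om)\om_{0,x}(y-x)$. Writing $\sigma = T_{1,0}$ for the pure time shift on $\Omega$, invariance of $\mu^\rho$ reduces to the distributional identity $(\lambda'(\cdot,\om),\sigma\om)\stackrel{d}{=}(\rho f(T_{0,\cdot}\sigma\om),\sigma\om)$ under $\bP$, which is the dualized form of the invariance of $\bP_\infty=f\bP$ for the environment-from-particle chain $\om\mapsto T_{n,X_n}\om$. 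For uniqueness, take $\nu$ spatially ergodic and invariant with density $\rho$, disintegrate $\nu=\int \nu^\om \bP(d\om)$, and use the conditional independence of the walks under $P^\om$ together with a construction of particles arriving at time~$0$ from the distant past to show $\nu^\om$ must be Poisson product with some intensity $\lambda(x,\om)$; invariance then forces the harmonic relation $\lambda(y,\sigma\om)=\sum_x \lambda(x,\om)\om_{0,x}(y-x)$, and by the $L^2$/martingale analysis that produces $f$ (Proposition~\ref{finvprop}) the only such nonnegative $\lambda$ with mean $\rho$ are $\rho f(T_{0,\cdot}\om)$, giving $\nu=\mu^\rho$. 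Tail triviality of $\mu^\rho$ follows because conditional on $\om$ the measure is a product (Kolmogorov's $0$--$1$ law), and the residual randomness in $f$ is removed by spatial ergodicity of $\bP$. Time ergodicity of $P_{\mu^\rho}$ then follows from tail triviality together with the fact that $\eta_\centerdot$ is a factor of the jointly ergodic system made of the environment and the conditionally independent walks.

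\textbf{Convergence in (b).} For $d\le 2$, I would couple the $\nu$- and $\mu^\rho$-initial systems on a common probability space driven by the same $\om$, pairing particles greedily at each site and letting unpaired particles move with independent walks. The signed discrepancy has zero mean density, and the basic coupling argument from \cite{ekha-gray}, as presented for the exclusion process in \cite{sepp-exclbook}, shows that in $d\le 2$ the recurrence of the averaged walk forces positive and negative discrepancies to meet and cancel, so any fixed finite window is eventually free of them. Convergence of $P_\nu\{\eta_t\in\cdot\}$ to $\mu^\rho$ then follows from the tail triviality in (a) via a standard tightness and subsequence argument. The main obstacle I expect is engineering the coupling to handle simultaneously the dynamic environment and the unbounded per-site occupancies; adapting the Ekhaus--Gray framework, designed for the bounded-occupancy exclusion setting, to the present independent-particle setting in a dynamic random environment, and making the cancellation quantitative enough to conclude convergence in distribution, is where the real technical work lies, and the restriction $d\le 2$ is precisely what makes recurrence-driven cancellation available.
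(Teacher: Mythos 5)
Your overall architecture — Poisson thinning for invariance, classification of spatially ergodic invariant measures, then a discrepancy coupling for convergence in $d\le 2$ — matches the paper's, but your route to uniqueness in part (a) is genuinely different and considerably less developed than the paper's. Where you propose to disintegrate an arbitrary spatially ergodic invariant $\nu$ over $\om$, argue via a long-time Poissonization of arrivals that $\nu^\om$ is a Poisson product, and then classify nonnegative solutions of the harmonic equation $\lambda(y,T_{1,0}\om)=\sum_x\lambda(x,\om)\om_{0,x}(y-x)$, the paper instead runs the Andjel/Liggett-type coupling machinery (Lemmas \ref{l1}--\ref{l5} and Proposition \ref{mu=murho}): it shows any two spatially ergodic, translation- and time-invariant measures must be stochastically ordered (by annihilation of opposite-sign discrepancies, Lemma \ref{l4}), and squeezes an arbitrary $\mu$ between $\mu^{\rho_1}$ and $\mu^{\rho_2}$ as $\rho_1\uparrow\rho\downarrow\rho_2$. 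The paper's approach has the advantage of sidestepping exactly the two steps in your plan that are left as gestures: (i) you would need to construct a bona fide joint stationary law of $(\om,\eta_0)$ with $\eta_0$ measurable with respect to $\wlev_{-\infty,-1}$ and then actually prove Poissonization conditionally on $\om$ (which needs quantitative decay of the maximal hitting probability $\sup_x P^\om\{X^x_N=y\}$ together with moment control), and (ii) you assert that the martingale analysis of Proposition \ref{finvprop} gives \emph{uniqueness} of the harmonic intensity, but that proposition only constructs one solution $f$; pinning down that $g$ satisfying the harmonic relation with $\E g=\rho$ must equal $\rho f$ requires an additional ergodic-averaging argument over the spreading kernel $\pi^\om_{-N,0}(\cdot,0)$, which is not automatic. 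Your route is plausible and would be interesting if carried out, but as written it replaces a robust coupling argument with two nontrivial unproved claims.

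Two further gaps. First, your justification of time ergodicity of $P_{\mu^\rho}$ — ``tail triviality together with the fact that $\eta_\centerdot$ is a factor of the jointly ergodic system'' — does not constitute a proof: neither tail triviality of $\mu^\rho$ nor ergodicity of $(\om,\{X^{x,j}_\centerdot\})$ directly forces time ergodicity of the occupation process, which lives on a smaller $\sigma$-algebra and could a priori retain nontrivial time-invariant events. The paper's proof (Lemmas \ref{condexplm} and \ref{mupartlm}) shows, via coupling and an $L^2(\mu^\rho)$ approximation argument moving particles one at a time, that $E^{\mu^\rho}[\psi\,\vert\,\cJ]$ is tail measurable, and only then invokes tail triviality; that intermediate step is essential. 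Second, your tail-triviality argument (``residual randomness in $f$ is removed by spatial ergodicity of $\bP$'') needs to be replaced by the more careful observation in Lemma \ref{erglm}: the bounded range of the walk makes $f(T_{0,x}\om)$ for $|x|$ large a function of the environment outside any fixed space-time ball, so $\mu^{\rho,\om}(B)$ is itself a tail function of $\om$ and hence $\bP$-a.s.\ constant — ergodicity alone of $\bP$ under spatial shifts is not the right property. The coupling sketch for part (b) is on the right track and essentially matches the paper's argument via Proposition \ref{rho1gerho2}, but, as you acknowledge, the actual work there (monotonicity of the discrepancy density, the ergodic-theorem/immortal-particle argument, and the recurrence of the $q$-chain in $d\le2$) is deferred rather than supplied.
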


Part (b) of the theorem is restricted to   $d=1,2$ because our proof uses recurrence of random walks (see Proposition \ref{rho1gerho2} below).


Two auxiliary  Markov transitions $q$ and $\qhom$ on $\bZ^d$  
 play important roles throughout much of  the paper: 
\be\begin{aligned}
q(x,y)&= \sum_{z\in\Zb^d} \E[\om_{0,0}(z)\om_{x,0}(z+y)]  \\[4pt]
&=\begin{cases} \sum_{z\in\Zb^d} \E[\om_{0,0}(z)\om_{0,0}(z+y)] 
&x=0, y\in\Zb^d\\[4pt]
 \sum_{z\in\Zb^d} p(z)p(z+y-x)
&x\neq 0,y\in\Zb^d  \end{cases} 
\end{aligned}\label{qxy}\ee
and 
\be    \qhom(x,y)=\qhom(0,y-x)=
\sum_{z\in\Zb^d} p(z)p(z+y-x). 
  \label{qbarxy}\ee
Think of $q$ as a symmetric random walk  whose transition probability is  
perturbed at the origin, and of $\qhom$ as the corresponding unperturbed homogeneous
walk. 

For $\theta\in\bT^d=(-\pi,\pi]^d$ define characteristic functions 
\be  \phi^\om(\theta) =\sum_z \om_{0,0}(z)e^{i\theta\cdot z}, \label{chfn1a}\ee
\be
\chfn(\theta)=\sum_{z\in\bZ^d} q(0,z)e^{i\theta\cdot z}= \bE\abs{\phi^\om(\theta)}^2 
\label{chfn2a}\ee
and 
\be \chfnhom(\theta)=\sum_{z\in\bZ^d} \qhom(0,z)e^{i\theta\cdot z}=  \abs{\bE\phi^\om(\theta)}^2.  
\label{barchfn2a}\ee
(We use the bar notation   for   quantities associated with the homogeneous walk $\qhom$, in addition to a few other particular items  such as $\wlev_s$ for the environment on level $s$.  In the case of $\chfnhom(\theta)$ this must not be confused with complex conjugation.) 
 Assumption \eqref{Rass} implies that  the random walk 
$\qhom$ is not supported on a subgroup
smaller than $\bZ^d$, hence  ${\chfnhom(\theta)}<1$ for $\theta\in\bT^d\setminus\{0\}$
\cite[p.~67, T7.1]{spitzer}.
Define a constant $\beta$   by 
\be \beta= \frac1{(2\pi)^d}\int_{\bT^d} 
\frac{1-\chfn(\theta)}{1-\chfnhom(\theta)}\,d\theta.\label{defbeta}\ee
The distribution  $q(0,z)$ is not degenerate by assumption \eqref{ass:q00} and hence
$\chfn(\theta)$ is not identically $1$.  Since also $\chfnhom(\theta)\le\chfn(\theta)$,  we see that
  $\beta\in(0,1]$ is well-defined.

Under the invariant distribution $\mu^\rho$ the covariance of the occupation variables is 
\be\label{cov14} 
\Cov^{\mu^\rho}[\eta(0),\eta(m)] =\rho^2\Cvv[f(\om), f(T_{m,0}\om)]
=\rho^2\E[f(\om) f(T_{m,0}\om)]-\rho^2, \quad m\in\bZ^d. 
\ee
The first equality above comes from the structure of $\mu^\rho$: given $\om$, 
the occupation variables are independent with means 
$E^{\mu^{\rho, \om}}[\eta(m)]=\rho f(T_{m,0}\om)$.  
 Our next theorem gives  a formula for \eqref{cov14}.  

\begin{theorem}  Let $d\ge 1$. 
For $m\in\bZ^d\setminus\{0\}$ 
\begin{align}
  \Cvv[f(\om), f(T_{m,0}\om)]  
= -\,\frac{\beta^{-1}}{(2\pi)^d}\int_{\bT^d} \cos(\theta\cdot m)
\frac{1-\chfn(\theta)}{1-\chfnhom(\theta)}\,d\theta\label{covfm2} \end{align}
 and 
 \be  \Vvv[f(\om)]=\beta^{-1}-1. \label{varf2}\ee
\label{covthm}\end{theorem}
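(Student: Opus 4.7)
The plan is to represent the density $f$ as an $L^2$ limit of explicit backward martingale approximants built from quenched transition kernels, reduce the covariance $\bE[f(\om)f(T_{0,m}\om)]$ to a question about two independent particles sharing a common environment, and then extract the Fourier integrals via a renewal identity for the averaged joint chain. Invariance of $f\bP$ for the environment Markov chain $\eta_n=T_{n,X_n}\om$ with transition $\Pi g(\om)=\sum_z\om_{0,0}(z)g(T_{1,z}\om)$ means $\Pi^* f=f$ in $L^2(\bP)$. A direct change of variable gives $(\Pi^* h)(\om)=\sum_z\om_{-1,-z}(z)\,h(T_{-1,-z}\om)$, and iterating this relation $n$ times collects the quenched $n$-step kernel:
\[ f(\om)=\sum_{y\in\bZ^d}\pi^\om_{-n,0}(y,0)\,f(T_{-n,y}\om). \]
Because $f$ is $\kS_{-\infty,-1}$-measurable, $f\circ T_{-n,y}$ is $\kS_{-\infty,-n-1}$-measurable and independent under $\bP$ of the $\kS_{-n,-1}$-measurable weight $\pi^\om_{-n,0}(y,0)$, with mean one. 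Taking conditional expectation identifies $W_n(\om):=\bE[f\mid\kS_{-n,-1}](\om)=\sum_y\pi^\om_{-n,0}(y,0)$, and the martingale convergence theorem yields $W_n\to f$ in $L^2(\bP)$.

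A space shift of the environment is a space shift of starting positions, so $W_n(T_{0,m}\om)=\sum_{y'}\pi^\om_{-n,0}(y',m)$, and the product inside $\bE[W_n(\om)W_n(T_{0,m}\om)]$ equals the joint quenched probability that two walks launched at $(y,y')$ at time $-n$ arrive at $(0,m)$ at time $0$. Under the averaged law the pair $(X_n,X'_n)$ is a diagonally translation-invariant Markov chain, and the difference $D_n=X_n-X'_n$ is itself Markov with transitions $\bar q$ off the origin and $q(0,\cdot)$ at the origin, as in \eqref{qxy}--\eqref{qbarxy}. Diagonal translation by $-y$ followed by summing out one free variable collapses the double sum to
\[ \bE[W_n(\om)W_n(T_{0,m}\om)]=\sum_{x\in\bZ^d}q^{(n)}(x,-m)=:G_n(-m). \]
Writing $q(y,z)=\bar q(y,z)+\delta_{y,0}\,g(z)$ with $g:=q(0,\cdot)-\bar q(0,\cdot)$ and using symmetry of $\bar q$ gives the recursion $G_n=\bar Q G_{n-1}+G_{n-1}(0)\,g$ with $G_0\equiv 1$, where $\bar Q h(w):=\sum_y\bar q(w,y)h(y)$. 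Setting $H_n:=G_n-1$ and $B_n:=G_n(0)$ and using $\bar Q 1=1$ reduces this to $H_n=\bar Q H_{n-1}+B_{n-1}g$, which iterates to $H_n=\sum_{k=1}^{n}B_{k-1}\,\bar Q^{n-k}g$.

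Introduce $F(s):=\sum_n B_n s^n$ and $\tilde A_m(s):=\sum_j s^j(\bar Q^jg)(-m)$ with $A:=\tilde A_0$; the renewal identity then becomes $F(s)(1-sA(s))=(1-s)^{-1}$ and $\sum_n G_n(-m)s^n=(1-s)^{-1}+sF(s)\tilde A_m(s)$. Fourier inversion, using that $\chfn-\bar\chfn$ is real and even in $\theta$, identifies
\[ \tilde A_m(s)=\frac{1}{(2\pi)^d}\int_{\bT^d}\cos(\theta\!\cdot\!m)\,\frac{\chfn(\theta)-\bar\chfn(\theta)}{1-s\,\bar\chfn(\theta)}\,d\theta. \]
The algebraic identity $(\chfn-\bar\chfn)+(1-\chfn)=1-\bar\chfn$ gives $A(1)+\beta=1$, so $\lim_{s\to1^-}(1-sA(s))=\beta$. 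Since the $L^2$ convergence in Step 1 forces $G_n(-m)\to C(m):=\bE[f(\om)f(T_{0,m}\om)]$, Abel's theorem delivers $C(m)=1+\tilde A_m(1)/\beta$. Decomposing $(\chfn-\bar\chfn)/(1-\bar\chfn)=1-(1-\chfn)/(1-\bar\chfn)$ and using $\frac{1}{(2\pi)^d}\int\cos(\theta\cdot m)d\theta=\delta_{0,m}$ splits $\tilde A_m(1)=\delta_{0,m}-\frac{1}{(2\pi)^d}\int\cos(\theta\cdot m)\frac{1-\chfn}{1-\bar\chfn}d\theta$. For $m=0$ this yields $C(0)=\beta^{-1}$, i.e., \eqref{varf2}, and for $m\neq0$ the Kronecker term drops out to leave precisely \eqref{covfm2}.

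The main technical obstacle is the integrability of the Fourier integrals on $\bT^d$. Both $1-\bar\chfn(\theta)$ and $\chfn(\theta)-\bar\chfn(\theta)=\Vvv(\phi^\om(\theta))$ vanish quadratically at $\theta=0$, the latter because $\phi^\om(0)\equiv1$ forces $\Vvv(\phi^\om(0))=0$, so the ratios $(\chfn-\bar\chfn)/(1-\bar\chfn)$ and $(1-\chfn)/(1-\bar\chfn)$ stay bounded near the origin; assumption \eqref{Rass} ensures $\bar\chfn<1$ on the rest of $\bT^d$, hence bounded integrands everywhere. Once this is settled, the algebraic manipulations of the generating functions are routine and the Abelian passage to the limit is justified by the pointwise convergence $G_n(-m)\to C(m)$ already obtained from the $L^2$-convergence of $W_n$.
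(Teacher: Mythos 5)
Your proof is correct and takes a genuinely different path from the paper's. The paper first proves the potential-kernel formula (Proposition~\ref{covprop}), $\Cvv[f,f\circ T_{0,m}]=\beta^{-1}\sum_z q(0,z)[\bar a(-m)-\bar a(z-m)]$, by separate arguments in the recurrent ($d\le 2$) and transient ($d\ge 3$) cases, leaning heavily on Spitzer's potential theory (T32.1, P29.4, P32.2, D11.1) in the recurrent case and on direct Green-function manipulations when $d\ge 3$; it then translates the resulting formula into the Fourier expression \eqref{covfm2} in a short coda. You instead set up the recursion $G_n=\bar Q G_{n-1}+B_{n-1}g$ for $G_n(w)=\sum_x q^{(n)}(x,w)$ (equivalent to the paper's Lemma giving $C_N(m)=\sum_y h(y)G_{N-1}(y,m)$, but organized as a renewal system in $n$), pass to generating functions, exploit the renewal identity $F(s)(1-sA(s))=(1-s)^{-1}$, and invert the Fourier transform, so the answer appears directly in the form \eqref{covfm2} without ever invoking $\bar a$. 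Your route is dimension-independent: because $G_n(m)\to C(m)$ (which both proofs obtain from the $L^2$ convergence $f_N\to f$ of Proposition~\ref{finvprop}), the Abelian limit $s\uparrow1$ is justified uniformly, and the only analytic input you need is that $(\chfn-\bar\chfn)/(1-\bar\chfn)$ and $(1-\chfn)/(1-\bar\chfn)$ are bounded on $\bT^d$, which follows from $\chfn-\bar\chfn=\Vvv(\phi^\om)\ge0$ vanishing to second order at $\theta=0$ and from the nonsingular covariance matrix guaranteed by assumption \eqref{Rass}. What the paper's approach buys is the intermediate probabilistic formula \eqref{covfm} (intrinsic interest, and used to define $\beta$ in \eqref{beta2}); what your approach buys is a unified, purely analytic derivation that avoids the recurrent/transient dichotomy and Spitzer's potential-theoretic machinery entirely. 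One small caveat: the positivity $1-sA(s)\ge\beta>0$ that makes the renewal identity invertible for all $s\in[0,1]$ deserves an explicit word --- it follows from $(\bar Q^jg)(0)=\frac1{(2\pi)^d}\int\bar\chfn^j(\chfn-\bar\chfn)\,d\theta\ge0$, hence $0\le A(s)\le A(1)=1-\beta$ --- but you have all the ingredients in place.
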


The compact analytic formulas \eqref{defbeta} and \eqref{covfm2}  arise from 
probabilistic formulas that involve the transitions $q$ and $\qhom$ and the potential
kernel of $\qhom$.  The probabilistic  arguments  
are    somewhat different in the recurrent ($d\le 2$) and transient  ($d\ge 3$)
cases.    The reader can find these in Section \ref{covarsec}. 

By the Riemann-Lebesgue lemma we have that
	\[\lim_{m\to\infty}\Cvv[f(\om), f(T_{m,0}\om)] =0.\]
By computing the integral in \eqref{covfm2} an interesting special case arises: 

\begin{corollary}  \label{cor:covf}
For the simplest case where $d=1$ 
and  $p(x)+p(x+1)=1$ for some $x\in\bZ$,  the fixed time occupation variables 
in the stationary process 
are uncorrelated:   \[\Cvv[f(\om), f(T_{m,0}\om)] =0\quad\text{for }m\ne 0.\] 
\end{corollary}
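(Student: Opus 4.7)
The plan is to show that under the hypothesis $p(x)+p(x+1)=1$, the ratio $\frac{1-\chfn(\theta)}{1-\bar\chfn(\theta)}$ appearing in \eqref{covfm2} is actually \emph{independent} of $\theta$. Then for $m\neq 0$ the integral $\int_{\bT}\cos(\theta m)\,d\theta$ of a constant against $\cos(\theta m)$ vanishes, giving the result immediately.

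First I would observe that the hypothesis forces a strong structural condition on $\om_{0,0}$. Since $p(y)=\E[\om_{0,0}(y)]\ge 0$ and $p(x)+p(x+1)=1$ forces $p(y)=0$ for $y\notin\{x,x+1\}$, and since $\om_{0,0}\in[0,1]$, we get $\om_{0,0}(y)=0$ a.s.\ for $y\notin\{x,x+1\}$ and $\om_{0,0}(x)+\om_{0,0}(x+1)=1$ a.s. So the random environment at a site is parameterized by a single random variable $r:=\om_{0,0}(x+1)\in[0,1]$, and
\[
\phi^\om(\theta)=e^{i\theta x}\bigl(1-r+re^{i\theta}\bigr)=e^{i\theta x}\bigl(1+r(e^{i\theta}-1)\bigr).
\]

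Next I would compute the two characteristic functions explicitly. With $a=\E r=p(x+1)$ and $\sigma^2=\Var(r)$, the relation
\[
\phi^\om(\theta)-\E\phi^\om(\theta)=e^{i\theta x}(r-a)(e^{i\theta}-1)
\]
gives at once
\[
\chfn(\theta)-\bar\chfn(\theta)=\E\bigl|\phi^\om(\theta)-\E\phi^\om(\theta)\bigr|^2=2\sigma^2(1-\cos\theta).
\]
A short direct expansion of $|\,(1-a)+ae^{i\theta}|^2$ yields
\[
1-\bar\chfn(\theta)=2a(1-a)(1-\cos\theta).
\]
Hence
\[
\frac{1-\chfn(\theta)}{1-\bar\chfn(\theta)}=1-\frac{\chfn(\theta)-\bar\chfn(\theta)}{1-\bar\chfn(\theta)}=1-\frac{\sigma^2}{a(1-a)},
\]
a constant in $\theta$. (Note assumptions \eqref{ass:q00} and \eqref{Rass} force $0<a<1$, so the denominator is nonzero.)

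Plugging this constant into formula \eqref{covfm2} of Theorem~\ref{covthm} and using $\int_{-\pi}^{\pi}\cos(\theta m)\,d\theta=0$ for $m\neq 0$ finishes the proof. There is essentially no obstacle here once the two-point support observation is in hand; the only thing to be mindful of is the elementary algebraic identity that collapses both numerator and denominator to a common factor of $(1-\cos\theta)$.
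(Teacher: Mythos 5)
Your proof is correct and is exactly the computation the paper points to when it says ``by computing the integral in \eqref{covfm2}'': the two-point support forces $\chfn(\theta)-\bar\chfn(\theta)$ and $1-\bar\chfn(\theta)$ to share the common factor $2(1-\cos\theta)$, making the integrand ratio constant in $\theta$, and then $\int_{\bT}\cos(\theta m)\,d\theta=0$ for $m\neq 0$. One small notational caution: you write $\sigma^2=\Var(r)$, which collides with the paper's use of $\sigma^2$ in \eqref{defsigma} for the variance of the averaged walk; rename it (say $\sigma_r^2$) to avoid confusion.
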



\subsection{Limit of the current process} 

To study the 
 particle current  we restrict to dimension $d=1$.
Define the mean and variance of  the averaged walk by 
 \be v=\sum_{x\in\bZ}  xp(x) 
\quad\text{and}\quad  \sigma^2=\sum_{x\in\bZ} x^2p(x)- v^2.\label{defsigma}\ee
 For $t\in\bR_+=[0,\infty)$ and $r\in\bR$, let 
   \be Y_n(t,r)= \sum_{x>0}  \sum_{j=1}^{\eta_0(x)} 
\mathbf{1}\lbrace X^{x,j}_{\fl{nt}}
\le \fl{nvt}+\fl{r\sqn}\,\rbrace
- \sum_{x\le 0} \sum_{j=1}^{\eta_0(x)} 
\mathbf{1}\lbrace X^{x,j}_{\fl{nt}}
> \fl{nvt}+\fl{r\sqn}\,\rbrace.
 \label{Y}\ee  
  $Y_n(t,r)$ represents  the net 
right-to-left current of particles seen  by a moving observer who starts at the origin and travels to   
$\fl{nvt}+\fl{r\sqn}$  in
 time $\fl{nt}$.

We look at the current under the following assumptions. 
Given $\om$, initial occupation variables obey a product measure
that may depend on the past of the environment, but
so that shifts are respected.   Precisely, 
 \be\begin{array}{l}
\text{given the environment $\om$, initial occupation variables 
 $(\eta_0(x))_{x \in \bZ}$ have}\\
\text{distribution  $\mu^\om(d\eta_0)=\underset{x \in \bZ}\otimes \mu_x^{\om}(d\eta_0(x)) $ 
where $\mu_x^{\om}$ is allowed to  depend  }\\
\text{measurably on   $\wlev_{-\infty,-1}$. Furthermore, 
$\mu^\om_x=\mu^{T_{x,0}\om}_0$. }
\end{array}\label{Dass}\ee
Let $P^\om$ denote the quenched distribution $P^\om_{\mu^\om}$ 
of initial  occupation variables and walks, and $P=P^\om(\cdot) \P(d\om)$
the distribution over everything: particles, walks and environments.  

Make this moment assumption: 
\be E[\eta_0(0)^2] <\infty. 
\label{momass}\ee
Parameters that appear in the results are 
\be\rho_0= E[\eta_0(x)]   \quad \text{and}\quad 
\qvar^2=\E[\Var^\om(\eta_0(0))].    \label{Mass}\ee 

 Next we describe the limiting process. 
Let $\dot W$ be space-time white noise corresponding and $B$  a 
two-sided one-parameter Brownian motion on $\bR$, independent of $\dot W$.
Let $W$ be the two-parameter  Brownian motion on $\bR_+ \times \bR$ given by $W(t,r)=\dot W([0,t]\times[0,r])$, if $r>0$, and $W(t,r)=-\dot W([0,t]\times[r,0])$, if $r<0$.
Define the process $Z(t,r)$ as the unique mild solution of
the stochastic heat equation (see \cite{walsh})
	\[Z_t=\frac{\sigma^2}2Z_{rr}+\sqrt{\rho_0}\,\dot W,\qquad Z(0,r)=\sigma_0 B(r).\]
Process $Z$ is given by 
\be\begin{aligned}
 Z(t,r)&= \sqrt{\rho_0} \iint_{[0,t]\times \bR} 
\varphi_{\sigma^2(t-s)}(r-x)\,dW(s,x) \\
&\qquad  +\;\qvar\int_{\bR} \varphi_{\sigma^2t}(r-x)B(x)\,dx, 
 \end{aligned}  \label{Zint}\ee
where $\varphi_{\nu^2}(x)=(2\pi\nu^2)^{-1/2}\exp(-x^2/2\nu^2)$
denotes the centered Gaussian density with variance $\nu^2$,
and $\Phi_{\nu^2}(x)=\int_{-\infty}^{x}\varphi_{\nu^2}(y)dy$
the distribution function. 

 $\{Z(t,r): t\in \bR_{+},r\in \bR\}$ is a mean zero  
Gaussian process. Its  covariance can be expressed as follows:
with 
\be \Psi_{\nu^2}(x)=\nu^2\varphi_{\nu^2}(x)
-x\big(1-\Phi_{\nu^2}(x)\big)\label{Psi}\ee
define two covariance functions on $(\bR_+ \times \bR)\times(\bR_+ \times \bR)$ by 
 \be \Gamma_1\big((s,q),(t,r)\big)=\Psi_{\sigma^2 (t+s)}(r-q)
 - \Psi_{\sigma^2 \vert t-s\vert}(r-q)  \label{Ga1}\ee
and 
\be \Gamma_2\big((s,q),(t,r)\big)=\Psi_{\sigma^2 s}(-q)+\Psi_{\sigma^2 t}(r)-\Psi_{\sigma^2 (t+s)}(r-q).  \label{Ga2}\ee
Then 
\be \mE[Z(s,q)Z(t,r)]=\rho_0\Gamma_1\big((s,q),(t,r)\big)+
\qvar^2\Gamma_2\big((s,q),(t,r)\big). \label{Zcov}\ee
(Boldface $\mP$ and $\mE$ denote generic probabilities and expectations  
not connected with the RWRE model.)  

The theorem we state is for the finite-dimensional distributions of the current
process, scaled and  centered by its quenched mean:  
\[ \overline{Y}_n(t,r)=n^{-1/4}\bigl\{ Y_n(t,r)-E^\om[Y_n(t,r)]\bigr\}. \]
Fix any $N\in\bN$,   time points  
$0<t_1<t_2<\cdots<t_N \in \bR_{+}$,  
  space points  $r_1,r_2,\dotsc,r_N \in \bR$ and an 
  $N$-vector $\thvec=(\theta_1,\dotsc, \theta_N)\in\bR^N$.  Form the linear combinations 
\[ \overline Y_n(\thvec)= \sum_{i=1}^N \theta_i \overline Y_n(t_i,r_i)
\quad\text{and}\quad  Z(\thvec)= \sum_{i=1}^N \theta_i  Z(t_i,r_i). \]
 
\begin{theorem}
\label{fddthm}  Consider independent particles on $\bZ$ in an i.i.d.~space-time environment 
with bounded jumps, under assumptions {\rm\eqref{Pass}} and {\rm\eqref{Rass}}.
 Let the $\om$-dependent initial distribution satisfy {\rm\eqref{Dass}}  
  and {\rm\eqref{momass}}.  With definitions as above, quenched characteristic functions converge
in $L^1(\P)$: 
\be  \lim_{n\to\infty} \E\bigl\lvert  E^\om(e^{i \overline Y_n(\thvec)}) - \mE(e^{i Z(\thvec)}) \bigr\rvert 
 =0.  \label{chflim}\ee 
  In particular,  under the averaged 
 distribution $P$,   convergence  in distribution  holds for 
   the $\bR^N$-valued  vectors as $n\to\infty$:   
 \[ \big(\overline Y_n(t_1,r_1), \overline Y_n(t_2,r_2),\cdots,\overline Y_n(t_N,r_N)\big)
 \Rightarrow \big(Z(t_1,r_1),Z(t_2,r_2),\cdots, 
Z(t_N,r_N)\big).\]
\end{theorem}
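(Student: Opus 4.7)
\noindent
The plan is to compute the quenched characteristic function
$\chi_n(\om) := E^\om[\exp(i\overline Y_n(\thvec))]$
directly, using the $P^\om$-independence of the family
$\{(\eta_0(x), (X^{x,j}_\centerdot)_{j\ge 1})\}_{x\in\bZ}$---a consequence of the product form
of $\mu^\om$ in \eqref{Dass} and the quenched independence of the walks. Define the scalar
statistic $\zeta^{x,j} := \sum_{i=1}^N \theta_i\,\xi^{x,j}(t_i,r_i)$, where $\xi^{x,j}(t,r)$
is the signed indicator appearing in $Y_n(t,r)$; since each $|\xi^{x,j}|\le 1$ we have
$|\zeta^{x,j}|\le\sum_i|\theta_i|$. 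Setting $m_x := E^\om[\zeta^{x,1}]$ and
$v_x := \Var^\om[\zeta^{x,1}]$, decompose
\[
Y_n(\thvec) - E^\om[Y_n(\thvec)] = A_n + B_n, \quad
A_n := \sum_x \sum_{j=1}^{\eta_0(x)} (\zeta^{x,j}-m_x), \quad
B_n := \sum_x (\eta_0(x) - E^\om\eta_0(x))\,m_x.
\]
Both $m_x$ and $v_x$ are bounded and vanish outside the $O(\sqn)$-window around
$\{\fl{nvt_i}+r_i\sqn\}_i$ where the CLT for $X^{x,1}_{\fl{nt_i}}$ at scale $\sqn$ is non-degenerate.
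Conditioning on $\eta_0$ uses the iid structure of the walks: a third-order expansion of
$\tilde\psi^x_n := E^\om[e^{in^{-1/4}(\zeta^{x,1}-m_x)}]$ gives
$\log\tilde\psi^x_n = -v_x/(2\sqn) + O(n^{-3/4})$, so after summing $\eta_0(x)\log\tilde\psi^x_n$
over $x$ (the accumulated error is $o(1)$ in $L^1(\P)$ thanks to \eqref{momass} and the $O(\sqn)$-support)
one obtains
\[
\chi_n(\om) = E^\om\Bigl[\exp\bigl(-\tfrac12 V_n\bigr)\,\exp\bigl(in^{-1/4}B_n\bigr)\Bigr] + o_{L^1(\P)}(1), \qquad
V_n := n^{-1/2}\sum_x \eta_0(x)\,v_x.
\]

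Split $V_n = V_n^{\mathrm{mean}} + \tilde V_n$ with
$V_n^{\mathrm{mean}}(\om) := n^{-1/2}\sum_x E^\om[\eta_0(x)]\,v_x$; the fluctuation $\tilde V_n$ has
$P^\om$-variance $O(n^{-1/2})$ (independence of $\{\eta_0(x)\}_x$ under $\mu^\om$, boundedness of $v_x$,
and $O(\sqn)$-support), so $\tilde V_n \to 0$ in $P^\om$-probability. The argument then reduces to
\textbf{(i)} $V_n^{\mathrm{mean}}(\om) \to V_\infty := \rho_0\sum_{i,j}\theta_i\theta_j\,\Gamma_1((t_i,r_i),(t_j,r_j))$
in $L^1(\P)$; and
\textbf{(ii)} $E^\om[e^{in^{-1/4}B_n}] \to \exp(-\tfrac12\qvar^2 \sum_{ij}\theta_i\theta_j\,\Gamma_2((t_i,r_i),(t_j,r_j)))$
in $L^1(\P)$. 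Under \eqref{Dass}, $E^\om[\eta_0(x)]$ is $\kS_{-\infty,-1}$-measurable while $v_x$ and $m_x$
are $\kS_{0,\infty}$-measurable, hence $\P$-independent, so $\E[V_n^{\mathrm{mean}}] = \rho_0\,n^{-1/2}\sum_x\E[v_x]$
and similarly the averaged quenched variance of $B_n$ equals $\qvar^2\,n^{-1/2}\sum_x\E[m_x^2]$ (using \eqref{Mass}).
In both cases the Riemann-sum limit at scale $\sqn$ is obtained from the averaged CLT for the walk,
identifying $\E[v_x]$ and $\E[m_x^2]$ (at $x\approx y\sqn$) with integrals of the $\Psi$-functions of
\eqref{Psi} that produce $\Gamma_1$ and $\Gamma_2$ respectively. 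Variance bounds of order $O(n^{-1/2})$
(again from the $O(\sqn)$-support and \eqref{momass}) upgrade mean convergence to $L^1(\P)$, and
Lindeberg--Feller applied to the $P^\om$-independent summands of $B_n$ (Lindeberg condition from
$E[\eta_0(0)^2]<\infty$ and $\max_x|m_x|\le\sum_i|\theta_i|$) gives the conditional Gaussian limit in (ii).
Combining with dominated convergence ($|\chi_n|\le 1$) yields \eqref{chflim}; the Gaussian identity
$\mE[e^{iZ(\thvec)}] = \exp(-\tfrac12 V_\infty - \tfrac12\qvar^2\sum_{ij}\theta_i\theta_j\,\Gamma_2)$
coming from \eqref{Zcov} identifies the limit.

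The main technical obstacle is the averaged-CLT input for (i): the decomposition
$\E[v_x] = \E[(\zeta^{x,1})^2] - \E[\zeta^{x,1}\zeta^{x,2}]$ requires the joint behavior of two
independent walks $X^{x,1}, X^{x,2}$ in a \emph{common} environment, whose difference process is governed
by the pair-transition $q$ of \eqref{qxy} rather than $\bar q$. One must show the two walks decorrelate
on the diffusive scale so that the limit of $\E[v_x]$ reduces to the classical-walk variance producing
$\Gamma_1$. Assumption \eqref{Rass} enters here through the aperiodicity of $\bar q$, and the arguments
are closely related to those used for Theorem~\ref{covthm}; the boundary effect at $x=0$ in the sign
convention of $\xi^{x,j}$ also requires care when assembling the $\Psi$-integrals that define $\Gamma_1$
and $\Gamma_2$.
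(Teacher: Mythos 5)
Your decomposition $Y_n-E^\om Y_n = A_n+B_n$ (conditioning on $\eta_0$, Taylor--expanding the per-particle quenched characteristic function, then treating the $\eta_0$-fluctuations $B_n$ by Lindeberg--Feller) is a genuinely different scheme from the paper's, which instead applies Lindeberg--Feller directly to the site-indexed sum $W_n^*=\sum_{|m|\le a(n)\sqn}\bar U_m$ of $P^\om$-independent mean-zero terms; each $\bar U_m$ already mixes the walk- and occupation-fluctuations at site $m$, so no Taylor expansion or log-error bookkeeping is needed. Your route is plausible in principle, but as written it has a genuine gap in step (i). You identify the needed input correctly---the convergence of $\E[m_x^2]=E[\zeta^{x,1}\zeta^{x,2}]$ requires knowing that two independent walks in a common environment decorrelate on the diffusive scale---but then you say this should be extracted from the pair-transition $q$ and the potential-kernel analysis ``closely related to Theorem~\ref{covthm}.'' That is the wrong tool: the $q$-kernel and $\bar a$-kernel arguments in Section~\ref{covarsec} compute lattice-scale correlations of the invariant density $f$, not anything about the diffusive ($\sqn$) scaling limit. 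The paper's actual input at this point is the \emph{quenched} CLT for space-time RWRE \cite{rass-sepp-05}: $P^\om(X_{ns}\le \fl{nvs}+x\sqn)\to\mP(B_{\sigma^2 s}\le x)$ $\P$-a.s., uniformly in $x$. With this, $m_x$ converges $\P$-a.s.\ to a deterministic limit and is bounded by $\sum_i|\theta_i|$, so $\E[m_x^2]\to m_\infty(y)^2$ follows immediately from bounded convergence---the ``two-walk decorrelation'' you worry about. You repeatedly write ``averaged CLT,'' which only gives $\E[P^\om(\cdot)]\to\mP(\cdot)$ and is not enough to control $\E[(P^\om(\cdot))^2]$; this is precisely the place where your argument cannot close without the quenched statement.

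Two further points in the same vein. First, your proposed $L^1(\P)$ upgrade via ``variance bounds of order $O(n^{-1/2})$'' is not straightforward: $v_x$ for nearby $x$ share large portions of the environment up to time $nt$, so the $\P$-covariances of $E^\om[\eta_0(x)]v_x$ across $x$ do not decay trivially. The paper sidesteps this by exploiting that $E^\om[\eta_0(\cdot)]$ and $\Var^\om(\eta_0(\cdot))$ are $\kS_{-\infty,-1}$-measurable while the walk quantities are $\kS_{0,\infty}$-measurable (hence $\P$-independent), and then combining (a) the uniform a.s.\ error $D_n(\om)\to 0$ from the quenched CLT with dominated convergence and a slowly growing cutoff $a(n)$, and (b) a block-averaging argument feeding into the $L^1$ ergodic theorem for the past-measurable factors. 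If you want your decomposition to go through, you should replace the ``averaged CLT + variance bound'' step with this quenched-CLT-plus-ergodic-theorem mechanism. Second, the accumulated Taylor error $\sum_x\eta_0(x)O(n^{-3/4})$ over an $O(a(n)\sqn)$-window is of order $a(n)n^{-1/4}$ in $P^\om$-mean, which is only $o(1)$ if you control the $a(n)$ cutoff carefully; this is exactly one of the bookkeeping burdens that the paper's direct Lindeberg--Feller on $\sum_m\bar U_m$ avoids.
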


While we do not have a quenched limit (convergence of distributions under a fixed $\om$),
limit \eqref{chflim} does imply that, if a quenched limit exists, it is the same as we have
found.  

A special case of the above theorem is the stationary situation. The proof of the following corollary comes by a direct computation using \eqref{Zcov}.

\begin{corollary}
Consider the same setting as in the previous theorem. 
If furthermore variables $\eta_0$ have conditional distribution \eqref{def:muom}, and more generally when $\sigma_0^2=\rho_0$, process $Z(t,0)$ has covariance
	\[\mE[Z(s,0)Z(t,0)]=\frac{\rho_0\sigma}{\sqrt{2\pi}}(\sqrt s+\sqrt t-\sqrt{\abs{t-s}}\,),\]
i.e.\  $\rho_0^{-1}\sigma^{-1}\sqrt{\pi/2}\,Z(t,0)$ is a fractional Brownian motion with Hurst parameter $1/4$.
\end{corollary}

\medskip



The next two theorems are on fluctuations of the quenched mean process $E^\w Y_n(t,r)$ in the special case of 
one-dimensional random walks with admissible steps 0 and 1. Although we expect the result to hold for more general random walks, this is the only case for which we are able to characterize the fluctuations.  Let $\sigma^2_D=\Vvv(\w_{0,0})$ and $\alpha=\bE\w_{0,0}(1-\w_{0,0})$. Note that $v=p(1)=\E\w_{0,0}$ and $\sigma^2=v(1-v)$.\medskip

First, we consider the case of an initial configuration $\eta_0$ with independent quenched means.

\begin{theorem}\label{th:SHE}
Let $\{\eta_0(x):x\in\Z\}$ be such that the quenched means $\{E^\w\eta_0(x):x\in\Z\}$ are independent with mean $\rho_0$ and variance $\sigma_0^2$. Assume that there exists $\e>0$ such that
$\sup_x\bE[\abs{E^\w \eta_0(x)}^{2+\e}]<\infty$. Assume the $\eta_0$-variables are independent of the transition probabilities $\{\w_{x,t}:(x,t)\in\Z\times\Z_+\}$.
Then the finite-dimensional marginals of the process 
$\{n^{-1/4}E^\w \bigl(Y_n(t,r)-\rho_0 r\sqrt n\,\bigr):t\ge0,r\in\R\}$ converge weakly as $n\to\infty$ to those of the unique mild solution to the stochastic heat equation
	\[z_t=\frac{\sigma^2}2 z_{rr}+\frac{\rho_0\sigma_D}{\sqrt{\alpha}}\,{\dot W},\quad z(0,r)=\sigma_0 B(r),\]
where $\dot W$ is space-time white noise and $B$ a two-sided Brownian, independent of $\dot W$.
\end{theorem}

The above includes the case when $\eta_0$ is independent of $\w$ altogether. In that case, $\sigma_0=0$ and thus the initial condition becomes $z(0,r)\equiv0$.

Next, we look at the stationary case. The reason this is different from the previous theorem is that now the quenched means of the initial occupation variables are not independent.

\begin{theorem} \label{th:fBM}
Let $\{\eta_0(x):x\in\Z^d\}$ be distributed according to \eqref{def:muom} with $\rho=1$. 
Assume the averaged probabilities $p_0=p_1=1/2$ so that $v=1/2$. 
Then
for $t\ge s>0$ we have
\[ \lim_{n\to\infty} \frac1{\sqrt{n}} \Cvv\bigl(E^\w Y_n(s,0),E^\w Y_n(t,0)\bigr)=   
 \frac{1}{2\sqrt{2\pi}}\bigl( \tfrac14 \alpha^{-1}-1\bigr)(\sqrt t+\sqrt s-\sqrt{t-s}\,). \]  
\end{theorem}

The above limit matches the covariance structure of a constant ($( \tfrac14 \alpha^{-1}-1)^{1/2}/(2\pi)^{1/4}$) times a fractional Brownian motion with Hurst parameter $1/4$.
Theorems \ref{th:SHE} and \ref{th:fBM} are proved in Section \ref{var-covar}. At the end of that section, we explain why we expect the same limiting behavior in the setting
of Theorem \ref{th:fBM} as that of Theorem \ref{th:SHE} and how this would imply the fractional Brownian motion limit.

\subsection*{Further notational conventions} 
  $\bN=\{1,2,3,\dotsc\}$ and $\bZ_+=\{0,1,2,\dotsc\}$. 
 Multistep 
transition probabilities from time $s$ to time $t>s+1$ are 
\[\pi_{s,t}^\w(x,y)=\sum_{u_1,\dotsc,u_{t-s-1}\in\bZ^d}
\pi_{s,s+1}^\w(x,u_1)\pi_{s+1,s+2}^\w(u_1,u_2)\dotsm
\pi_{t-1,t}^\w(u_{t-s-1},y).\] 
We omit floor notation from time parameters, and so for the walk, $X_t=X_{\fl{t}}$ for real $t\ge 0$.  No jumps happen between integer times.  

 $\mathcal{S}$ denotes the set of all measures $\mu$ on $(\mathbb{Z}_{+})^{\mathbb{Z}^d}$ that  are invariant under spatial translations.   $\mathcal{S}_e$ denotes the subset of $\mathcal{S}$ consisting of ergodic measures.   $\mathcal{I}$ denotes the set of measures that  are invariant for the particle evolution, that is $\mu_t=\mu S(t)=\mu \mbox{  for all } t\in \bZ_{+}$ ($\mu_t$ and $\mu S(t)$ here denote the measure on configurations at time $t$ when the initial measure on configurations is $\mu$).
$\bE$, $E^{\om},E,E_{\eta},\mathbf{E}$ etc will denote expectations with respect to $\bP$, $P^{\om},P,P_{\eta},\mathbf{P}$, etc. Variances and covariances are denoted similarly. Constants $C$ can change from term to term.

\section{Coupled Process} \label{coupsec} 

This section describes the coupling that will be used to prove Theorem \ref{invthm}.
We couple two processes $\eta_t$ and $\zeta_t$ so that matched particles move
together  forever, while unmatched particles move independently.  To do this precisely,
choose for each space-time point $(x,t)$ a collection  
$\Xi_{x,t}=\{\jump_{x,t}^{0,j}, \jump_{x,t}^{+,j}, \jump_{x,t}^{-,j} :j\in\bN\}$ 
of  i.i.d.~$\bZ^d$-valued jump vectors from distribution 
$\om_{x,t}$.  Given initial configurations   $\eta_0$ and $\zeta_0$,  
perform the following actions. At each site $x$  set 
\[ \text{ $\xi_0(x)=\eta_0(x)\wedge\zeta_0(x)$, 
\  $\beta^{+}_0(x)=(\eta_0(x)-\zeta_0(x))^{+}$, and  $\beta^{-}_0(x)=(\eta_0(x)-\zeta_0(x))^{-}$.}  \]
$\xi_0(x)$ is the number of matched particles,  while $\beta^\pm_0(x)$ count
the unmatched $(+)$  and $(-)$ particles. 
Move particles  from each site $x$ as follows: the $\xi_0(x)$ matched particles  jump to locations
 $x+ \jump_{x,0}^{0,j}$ for $j=1,\dotsc,\xi_0(x)$, the $\beta^+_0(x)$
  $(+)$ particles jump to  locations
 $x+ \jump_{x,0}^{+,j}$ for $j=1,\dotsc,\beta^+_0(x)$,  and the $\beta^-_0(x)$
  $(-)$ particles jump to  locations
 $x+ \jump_{x,0}^{-,j}$ for $j=1,\dotsc,\beta^-_0(x)$. 
 After all jumps from all sites  have been executed, 
  match as many pairs of  $(+)$ and $(-)$
 particles   at the same site as possible.   This means that 
 a $(+-)$ pair together at the same site 
 merges to create a single $\xi$-particle at the same site.   (For example, if after the jumps site $y$ contains $s$  $\xi$-particles, $k$  $(+)$ particles and $\ell$  $(-)$
 particles, then set  $\xi_1(y)=s+k\wedge \ell$ and   $\beta^\pm_1(y)=(k-\ell)^\pm$.)     Since  particles are not labeled, 
 it is immaterial which particular $(+)$ particle merges with a particular $(-)$ particle. 
 When this is complete we have defined the state 
 $(\xi_1(x), \beta^{+}_1(x), \beta^{-}_1(x))_{x\in\bZ^d}$ at time $t=1$. 
 Then repeat, utilizing
 the jump variables for time  $t=1$.  And so on.   
 
 This produces a joint process $(\xi_t,\beta^+_t, \beta^-_t)$ such that  
 \[ \text{ $\xi_t(x)=\eta_t(x)\wedge\zeta_t(x)$, 
\  $\beta^{+}_t(x)=(\eta_t(x)-\zeta_t(x))^{+}$, and  $\beta^{-}_t(x)=(\eta_t(x)-\zeta_t(x))^{-}$.}  \]
The $\eta$ and $\zeta$ processes are recovered from 
\[   \eta_t(x)= \xi_t(x)+\beta^{+}_t(x)  
\quad\text{and}\quad 
\zeta_t(x)= \xi_t(x)+\beta^{-}_t(x). \]

  The definition has the effect that a matched pair
of $\eta$ and $\zeta$ particles stays forever together, while a pair of $(+)$ and $(-)$ particles
together at a site annihilate each other and turn into a matched pair.  If we
are only interested in the evolution of the discrepancies $(\beta^+_t,\beta^-_t)$
we can discard all matched pairs as soon as they arise, and simply consider 
independently evolving  $(+)$ and $(-)$ particles that annihilate each other upon meeting. 

If we denote by $\Xi=\{ \Xi_{x,t}: x\in\bZ^d,t\in\bZ\}$ 
 the collection of jump
variables,  and by $G_{0,t}$ the function that constructs the values at the 
origin at time $t$:
\[  \bigl(\xi_t(0),\beta^+_t(0), \beta^-_t(0)\bigr)= G_{0,t}(\eta_0,\zeta_0,\Xi)  \]
then it is clear that the values at other sites $x$ are constructed by 
applying this same function to shifted input: 
\be  \bigl(\xi_t(x),\beta^+_t(x), \beta^-_t(x)\bigr)= G_{0,t}(\shift_x\eta_0,\shift_x\zeta_0,\shift_x\Xi).
\label{shift2}\ee 
Here $\shift_x$ is a spatial shift: $(\shift_x\eta)(y)=\eta(x+y)$ and 
$(\shift_x\Xi)_{y,t}=\Xi_{x+y,t}$ for $x,y\in\bZ^d$.  
In particular, if the initial
distribution $\wt\mu$ of the pair $(\eta_0,\zeta_0)$ is invariant  and ergodic
under the shifts $\shift_x$,  while  $\{ \Xi_{x,t}: x\in\bZ^d,t\in\bZ\}$  are i.i.d.\ and independent
of $(\eta_0,\zeta_0)$, it follows first that the triple $(\eta_0,\zeta_0,\Xi)$
is ergodic, 
 and then from \eqref{shift2} that for each fixed $t$  the configuration 
$(\xi_t,\beta^+_t, \beta^-_t)$ is invariant and ergodic under the shifts $\shift_x$. 

Let $\wt{\cS}$, resp.\ $\wt{\cS}_e$, 
denote the set of spatially invariant, resp.\ ergodic, probability distributions on 
   pairs $(\eta,\zeta)$ of configurations of occupation variables. 
   
\begin{lemma} 
\label{betadecrease} Let $\wt{\mu} \in \wt\cS$.
The expectations  $E_{\wt{\mu}}[\beta_t^{+}(x)]$ and 
 $E_{\wt{\mu}}[\beta_t^{-}(x)]$ are independent of $x$
and nonincreasing in $t$.  
\end{lemma}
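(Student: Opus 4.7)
The first claim is essentially recorded already in the text preceding the lemma: since $\tilde\mu\in\tilde\cS$ and the jump collection $\Xi$ is i.i.d.\ (under the annealed measure, since the $\om_{t,x}$ are i.i.d.\ by \eqref{Pass} and $\Xi_{t,x}$ is conditionally a function of $\om_{t,x}$) and independent of $(\eta_0,\zeta_0)$, the triple $(\eta_0,\zeta_0,\Xi)$ is spatially invariant; then \eqref{shift2} forces the configuration $(\xi_t,\beta_t^+,\beta_t^-)$ to be spatially stationary for each fixed $t$, and in particular $E_{\tilde\mu}[\beta_t^\pm(x)]$ does not depend on $x$.

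For monotonicity in $t$, my plan is to introduce the pre-annihilation counts
\[ \tilde\beta_{t+1}^+(x)=\sum_{y\in\bZ^d}\sum_{j=1}^{\beta_t^+(y)}\mathbf{1}\{y+\jump_{t,y}^{+,j}=x\}, \]
and analogously $\tilde\beta_{t+1}^-(x)$; these count the $\pm$ particles at $x$ immediately after the time-$t$ jumps but before the matching step. The annihilation rule yields the pointwise bound $\beta_{t+1}^+(x)=(\tilde\beta_{t+1}^+(x)-\tilde\beta_{t+1}^-(x))^+\le\tilde\beta_{t+1}^+(x)$, and symmetrically $\beta_{t+1}^-(x)\le\tilde\beta_{t+1}^-(x)$. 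Next I would compute $E_{\tilde\mu}[\tilde\beta_{t+1}^+(x)]$: the random variable $\beta_t^+(y)$ is a function of $(\eta_0,\zeta_0)$ and $\{\Xi_{s,\cdot}:s<t\}$, hence depends on $\om$ only through the past levels $\{\om_{s,\cdot}:s<t\}$, and so is independent of $\om_{t,y}$ and of the time-$t$ jumps $\jump_{t,y}^{+,j}$. Since the annealed marginal of each $\jump_{t,y}^{+,j}$ is $p(\cdot)$, conditioning on the past and then averaging gives
\[ E_{\tilde\mu}[\tilde\beta_{t+1}^+(x)]=\sum_{y\in\bZ^d}E_{\tilde\mu}[\beta_t^+(y)]\,p(x-y)=E_{\tilde\mu}[\beta_t^+(0)]\sum_{y\in\bZ^d}p(x-y)=E_{\tilde\mu}[\beta_t^+(0)], \]
where the second equality uses the $x$-independence from the first paragraph and the third uses $\sum_z p(z)=1$. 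Combined with the pointwise upper bound and the $x$-independence at time $t+1$, this delivers $E_{\tilde\mu}[\beta_{t+1}^+(0)]\le E_{\tilde\mu}[\beta_t^+(0)]$; the $\beta^-$ case is symmetric.

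Nothing here is deep; the lemma is really just shift covariance plus the observation that the matching step can only decrease counts. The one point worth being careful about is the conditional-independence bookkeeping in the middle step: under the annealed measure, jumps from the same space-time site are \emph{not} mutually independent (they share the kernel $\om_{t,y}$), so the proof must stay at the level of first moments and linearity of expectation rather than attempt to factor joint distributions of the jumps. Neither of the nondegeneracy hypotheses \eqref{ass:q00} or \eqref{Rass} is used.
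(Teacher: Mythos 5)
Your proof is correct and follows essentially the same approach as the paper's, which disposes of the lemma in two sentences (shift-invariance for the $x$-independence; ``discrepancy particles are not created, only annihilated'' for the monotonicity). You have simply made the second step explicit by introducing the pre-annihilation counts $\tilde\beta^\pm_{t+1}$, checking the pointwise domination, and verifying via Fubini and the independence of $\beta^+_t(y)$ from $\om_{t,\cdot}$ that the pre-annihilation density equals the time-$t$ density.
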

 \begin{proof}  
The independence of $x$ is due to the shift-invariance from
\eqref{shift2}.  That $E_{\wt{\mu}}[\beta_t^{\pm}(x)]$ is nonincreasing in $t$ follows from the fact that discrepancy particles are not created,
only annihilated.
 \end{proof}
 

\begin{proposition}
\label{rho1gerho2} Let $d=1 \mbox{ or } 2$.
Suppose $\wt{\mu}\in \wt\cS_e$. Let $E_{\wt{\mu}}[\eta(0)]=\rho_1$ and $E_{\wt{\mu}}[\zeta(0)]=\rho_2$. If $\rho_1\ge \rho_2$, we have
\[ E_{\wt\mu}[\beta^-_t(0)]=E_{\wt{\mu} }[(\eta_t(0)-\zeta_t(0))^{-}] \to 0 \mbox{ as } t \to \infty.\]
\end{proposition}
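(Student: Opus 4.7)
By Lemma~\ref{betadecrease}, $\phi(t):=E_{\tilde\mu}[\beta^-_t(0)]$ is nonincreasing in $t$, so $\alpha:=\lim_{t\to\infty}\phi(t)\in[0,\rho_2]$ exists. Since only $+/-$ pairs are annihilated while matched particles are conserved, the signed discrepancy density is preserved in time: $E_{\tilde\mu}[\beta^+_t(0)-\beta^-_t(0)]=\rho_1-\rho_2$, and hence $E_{\tilde\mu}[\beta^+_t(0)]=\phi(t)+(\rho_1-\rho_2)$. Assume toward a contradiction that $\alpha>0$; then at every time $t$, $E_{\tilde\mu}[\beta^-_t(0)]\ge\alpha>0$ and $E_{\tilde\mu}[\beta^+_t(0)]\ge\alpha+(\rho_1-\rho_2)>0$.

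The key low-dimensional input is recurrence of an averaged two-point motion. Under $P=\int P^\om\,\bP(d\om)$, the difference of two independent quenched walks starting at distinct sites is a time-homogeneous Markov chain on $\bZ^d$ whose off-diagonal increments have distribution $\bar q(0,\cdot)$ from~\eqref{qbarxy} and whose increment at the origin has distribution $q(0,\cdot)$ from~\eqref{qxy}. Assumption~\eqref{Rass} makes $\bar q$ aperiodic, hence recurrent in $d\in\{1,2\}$ by Chung--Fuchs, and the single-site perturbation at the origin preserves recurrence. Consequently, for $\bP$-a.e.\ $\om$, two independent walks under $P^\om$ starting from distinct sites occupy a common site at a common positive integer time with $P^\om$-probability one.

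To close the argument I extract a subsequential weak limit $\tilde\nu$ of the Cesaro time-averages $\tfrac1n\sum_{t=0}^{n-1}(\text{law of }(\eta_t,\zeta_t) \text{ under }\tilde\mu)$; tightness is immediate from the uniform bound on expected occupations. Then $\tilde\nu\in\tilde\cS$, it is invariant under the coupled dynamics, and it inherits $E_{\tilde\nu}[\beta^\pm(0)]$ equal to the limits from the first paragraph, both strictly positive by the contradiction hypothesis. Running the coupled process from $\tilde\nu$, one locates with positive probability a $+$ particle and a $-$ particle at distinct nearby sites; the recurrence step of paragraph two forces any such tagged pair to meet with probability one, and each meeting (or any intervening annihilation of either tagged particle by a third discrepancy) strictly lowers $E_{\tilde\nu}[\beta^-_t(0)]$ at some positive time $t$, contradicting the time-invariance of $\tilde\nu$ and thereby forcing $\alpha=0$.

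The main obstacle I expect is formalizing this final implication in the presence of multi-particle interference: a tagged $+/-$ pair need not annihilate each other if either is first eliminated by some third discrepancy. I would handle this by the telescoping identity $\phi(0)-\phi(T)=\sum_{t=0}^{T-1}(\phi(t)-\phi(t+1))$, which identifies the left-hand side with the total per-site expected annihilation count over $[0,T]$, and by lower-bounding this count via a tagged $-$ particle: the expected number of coincidences of its trajectory with some $+$ particle grows without bound as $T\to\infty$ by recurrence together with the positive $+$-density, so with positive probability at least one annihilation event involves the tagged particle, and this suffices to contradict the invariance of $\tilde\nu$.
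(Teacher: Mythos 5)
Your overall strategy (coupling plus recurrence of the two‑point motion) is the right one and matches the paper's, but the specific route you propose — extract an invariant measure $\tilde\nu$ as a subsequential Cesàro limit and then argue under $\tilde\nu$ — has a genuine gap that the paper's proof is specifically designed to avoid.

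The gap is the assertion that $\tilde\nu$ ``inherits $E_{\tilde\nu}[\beta^\pm(0)]$ equal to the limits from the first paragraph, both strictly positive.'' Weak convergence of laws on $(\bZ_+^{\bZ^d})^2$ with the product topology does not preserve expectations of unbounded coordinate functions such as $\beta^-(0)$. Fatou/portmanteau gives only $E_{\tilde\nu}[\beta^-(0)]\le\liminf_k E_{\tilde\mu_{t_k}}[\beta^-(0)]$, i.e.\ the limiting density can only drop, not be bounded below. To carry the strict positivity to $\tilde\nu$ you would need uniform integrability of $\{\beta^-_t(0)\}_{t\ge 0}$ under $\tilde\mu$; that is not ``immediate from the uniform bound on expected occupations'' (which gives only tightness of the laws, a weaker statement), and under a general spatially ergodic $\tilde\mu$ with just a first‑moment assumption it is not obviously true. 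Without this step, the weak limit $\tilde\nu$ could well satisfy $E_{\tilde\nu}[\beta^\pm(0)]=0$, and then there is nothing to contradict. (Your telescoping fix in the last paragraph targets the separate issue of multi‑particle interference, not this one, so it does not repair the gap.) Note also that one cannot invoke the paper's Lemma \ref{l4} to plug the hole here, since that lemma has the hypothesis $\tilde\mu_{t_k}(B)\ge\delta$ on a \emph{probability}, and passing from a density lower bound $E_{\tilde\mu}[\beta^-_t(0)]\ge\delta$ to a probability lower bound is precisely a uniform‑integrability/truncation issue again.

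The paper's proof is set up to sidestep exactly this problem. Instead of taking a weak limit in $t$, it fixes the initial data and counts, per initial site $x$, the discrepancies $\beta^\pm_{0,t}(x)$ that survive past time $t$. Since $\beta^\pm_{0,t}(x)\le\eta_0(x)+\zeta_0(x)$, which has finite $\tilde\mu$-mean, the spatial ergodic theorem applies directly and gives $E_{\tilde\mu}[\beta^\pm_{0,t}(0)]$ as an almost sure spatial average. Combining this with the finite jump range one gets $E_{\tilde\mu}[\beta^\pm_t(0)]\le E_{\tilde\mu}[\beta^\pm_{0,t}(0)]$, and as $\beta^\pm_{0,t}(x)\downarrow\beta^\pm_{0,\infty}(x)$ monotonically and dominatedly, the limit produces a strictly positive density of \emph{immortal} $+$ and $-$ discrepancies. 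These immortal particles are genuinely labeled walks that persist forever, so tagging an immortal $+$ and an immortal $-$ and applying the recurrence of the averaged difference chain $q$ yields an immediate contradiction, with no invariant‑measure extraction, no uniform integrability, and no multi‑particle bookkeeping needed. If you want to keep your plan, you would need to either prove the required uniform integrability or, better, replace the Cesàro step with the immortal‑particle construction above.
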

\begin{proof}
We already know from Lemma \ref{betadecrease} that $E_{\wt{\mu}}[\beta_{t}^{\pm}(0)]$ 
cannot increase. To get a contradiction
let us assume that  $E_{\wt{\mu}}[\beta_{t}^{-}(0)] \ge \delta$ for all $t$ and some $\delta >0$. Since  $E_{\wt{\mu}}[\beta_{t}^{+}(0)]- E_{\wt{\mu}}[\beta_{t}^{-}(0)]=E_{\wt{\mu}}[\eta_{t}(0)]- E_{\wt{\mu}}[\zeta_{t}(0)]=\rho_1-\rho_2\ge 0$, we also have $E_{\wt{\mu}}[\beta_{t}^{+}(0)] \ge \delta$ for all $t$.

At time $0$, assign  labels separately to the $(+)$ and $(-)$ particles from some countable
label sets $\cJ^+$ and $\cJ^-$  and denote the locations of these particles by 
  $\{w_i^{+}(t),w_j^{-}(t): i\in\cJ^+, j\in \cJ^-\}$. Each $(+)$ and $(-)$ particle retains its label throughout its lifetime. 
The lifetime of $(+)$ particle $j \in \cJ^+$
  is  
\[
 \tau_j^{+} = \inf\{ t\ge 0:w_j^{+} \mbox{ is annihilated by a $(-)$ particle} \}. 
 \] 
If $ \tau_j^{+}=\infty$, then $j$ is {\sl immortal}. Similarly define $\tau_j^{-}$. Let 
\[ \beta^{\pm}_{0,t}(x)=\sum_j \ind\lbrace w_j^{\pm}(0)=x, \tau_j^{\pm}>t\rbrace\]
denote the number of $(\pm)$ particles initially at site $x$ that live past time $t$. 
We would like to claim  that  for a fixed $t$ the configuration 
  $\{(\beta^{+}_{0,t}(x),\beta^{-}_{0,t}(x)): x\in \bZ^d\}$ is   invariant and   ergodic under the 
  spatial shifts $\shift_x$.
This will be true if the evolution is given by   a mapping $F_{0,t}$ so that 
$(\beta^{+}_{0,t}(x),\beta^{-}_{0,t}(x))= F_{0,t}(\shift_x\eta_0,\shift_x\zeta_0,\shift_x\Xi)$
 for all $x\in\bZ^d$.   Such a mapping can be created by specifying precise rules
 for the movement and annihilation of  $(+)$ and $(-)$ particles that are naturally invariant
 under shifts.    For example, we can take $\cJ^\pm\subset\bZ$ and give the sites of $\bZ^d$ some ordering.  Label particles initially in increasing order,   so that $i<j$ implies $w_i^{\pm}(0)\le w_j^{\pm}(0)$.    Then at each time step  particles   from a given site   are   distributed to their subsequent  locations in increasing order, and $(+,-)$ pairs are matched beginning with lowest labels.  Of course the overall ordering of particles is not preserved, but this mechanism does not depend on the absolute labels, only their ordering,  and respects the spatial translations.

Then the  ergodic theorem implies that 
\[ E_{\wt{\mu}}[\beta^{\pm}_{0,t}(0)]= \lim_{n \to \infty} \frac{1}{(2n+1)^d}\sum_{\abs{x}\le n} \beta^{\pm}_{0,t}(x) \mbox{ a.s. }\]
Here $\abs{x}$ is the $\ell^\infty$ norm:  for a vector
$x=(x_1,\dotsc, x_d)$, 
$\abs{x}=\max_{1\le i\le d}\abs{x_i}$. 
Since particles take jumps of magnitude at most $R$, 
\begin{align*}
\delta\le E_{\wt{\mu}}[ \beta^{\pm}_{t}(0) ] 
&=\lim_{n \to \infty} \frac{1}{(2n+1)^d}\sum_{\abs{x}\le n} 
\beta^{\pm}_{t}(x) \\
&\le \lim_{n \to \infty} \frac{1}{(2n+1)^d}\sum_{\abs{x}\le n+Rt} 
\beta^{\pm}_{0,t}(x) =  E_{\wt{\mu}}[\beta^{\pm}_{0,t}(0)]. 
\end{align*}

The initial occupation numbers of immortal $+/-$ particles are  
\[ \beta^{\pm}_{0,\infty}(x)=\lim \limits_{t \to \infty}\beta^{\pm}_{0,t}(x).\]
The limit exists by monotonicity. 
This limit produces again a functional relationship of the type \eqref{shift2}:
\begin{align*}
  \big(\beta^{+}_{0,\infty}(x),\beta^{-}_{0,\infty}(x)\big)
&=\lim \limits_{t \to \infty}  \big(\beta^{+}_{0,t}(x),\beta^{-}_{0,t}(x)\big)=\lim \limits_{t\to \infty} 
F_{0,t}(\shift_x\eta_0,\shift_x\zeta_0,\shift_x\Xi)\\
&= F_{0,\infty}(\shift_x\eta_0,\shift_x\zeta_0,\shift_x\Xi).  
\end{align*}
Thereby 
  $\{(\beta^{+}_{0,\infty}(x),\beta^{-}_{0,\infty}(x)): x\in \bZ^d\}$ is spatially invariant and ergodic. 

By the ergodic theorem again 
\[ E_{\wt{\mu}}[\beta^{\pm}_{0,\infty}(0)]
= \lim_{n \to \infty} \frac{1}{(2n+1)^d}
\sum_{\abs{x}\le n} \beta^{\pm}_{0,\infty}(x) 
\quad\mbox{a.s. }\]
while by the monotone convergence theorem 
\[  E_{\wt{\mu}}[\beta^{\pm}_{0,\infty}(0)]=\lim\limits_{t \to \infty}  E_{\wt{\mu}}[\beta^{\pm}_{0,t}(0)] \ge \delta.\]

We have shown that the assumption  $E_{\wt{\mu}}[\beta_{t}^{-}(0)] \ge \delta$ leads to the existence of positive densities of immortal 
$(+)$ and $(-)$ particles.  However, a situation like this will never arise for $d=1\mbox{ or } 2$, the reason being that any two particles on the lattice will meet each other infinitely often. More precisely, fix any two particles
 and let
 $X^+_\abullet$ and $X^-_\abullet$  denote the walks 
undertaken by these two particles.  Then  $X^+_\abullet$ and $X^-_\abullet$ 
are two independent walks in a common environment $\om$.
Let  $Y_t=X^+_t-X^-_t$. 
If we average out the environment, then $Y_t$ is a Markov
chain on $\bZ^d$ with transition $q(x,y)$ given by \eqref{qxy}.  Away from the origin this is a symmetric random walk with bounded steps, and hence   recurrent when $d=1$ or $2$. Thus $Y_t=0$ infinitely often. We have   arrived at a contradiction and the proposition is proved.\end{proof}
 

\bigskip

\section{Invariant measures} \label{inv}

In this section we prove Theorem \ref{invthm}. 
We begin by deriving the well-known invariant density for 
the environment process seen by a single tagged particle. 

\begin{proposition} There exists a function 
$0\le f<\infty$ on $\Omega$ such that 
$\E f=1$, $\E(f^2)<\infty$, 
$f(\om)$ is a function of $\wlev_{-\infty,-1}$, and  
\be
f(\om)=\sum_{x\in\bZ^d} 
f(T_{x,-1}\om)\pi_{-1,0}^\om(x,0)\quad\text{$\P$-almost surely.} 
\label{finv}\ee
\label{finvprop}\end{proposition}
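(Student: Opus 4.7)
The natural candidate for the invariant density is the backward expression
\[
f_n(\om) = \sum_{y \in \bZ^d} \pi^\om_{-n,0}(y, 0), \qquad n \ge 0,
\]
with $f_0 \equiv 1$. This $f_n$ is the Radon-Nikodym derivative against $\bP$ of the law of $T_{n, X_n}\om$ for a walk started at the origin, as one sees by shifting space and time using the translation invariance of $\bP$. My plan is to show that $(f_n)_{n \ge 0}$ is a nonnegative $L^2$-bounded martingale with respect to the increasing filtration $\cG_n = \kS_{-n, -1}$, and then take $f$ as its a.s.\ and $L^2$ limit.

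The mean $\E f_n = 1$ is immediate from $\E[\pi^\om_{-n, 0}(y, 0)] = p^{(n)}(y, 0)$ and $\sum_y p^{(n)}(y, 0) = 1$. For the martingale identity $\E[f_n \mid \cG_{n-1}] = f_{n-1}$, I would decompose the $n$-step kernel over the time-$(-n)$ jump,
\[
f_n(\om) = \sum_{x, y} \om_{-n, y}(x - y)\, \pi^\om_{-n+1, 0}(x, 0),
\]
and take the conditional expectation given $\cG_{n-1}$; since $\om_{-n,\cdot}$ is independent of $\cG_{n-1}$ with $\E \om_{-n, y}(x - y) = p(x - y)$ and $\sum_y p(x - y) = 1$, the double sum collapses to $f_{n-1}$.

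The core step is the $L^2$ bound $\sup_n \E f_n^2 < \infty$. A direct computation using the same time-$(-n)$ independence gives the orthogonal-increment identity
\[
\E(f_n - f_{n-1})^2 = \sum_{w \in \bZ^d} c(w)\, q^{(n-1)}(w, 0), \qquad c(w) := q(0, w) - \bar q(0, w),
\]
where the summation identity $\sum_{x_1}\E[\pi^\om_{0, n-1}(x_1, 0)\pi^\om_{0, n-1}(x_1 + w, 0)] = q^{(n-1)}(w, 0)$ uses the fact that, under $\P$, the difference $Y_t = X^2_t - X^1_t$ of two walks in common environment is Markov with the kernel $q$ of \eqref{qxy}. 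Summing in $n$ produces $\sum_w c(w)\, G_q(w, 0)$, where $G_q(\cdot, 0)$ is the Green function of $q$ at the origin. I would evaluate this by exploiting the rank-one structure $q - \bar q = \delta_0 \otimes c$: the resolvent identity at $s < 1$ yields
\[
\sum_w c(w)\, G^s_q(w, 0) = \frac{\alpha_s}{1 - s\alpha_s}, \qquad \alpha_s := \sum_w c(w)\, G^s_{\bar q}(w, 0),
\]
and Parseval identifies
\[
\lim_{s \uparrow 1} \alpha_s = (2\pi)^{-d}\int_{\bT^d}\frac{\chfn(\theta) - \bar\chfn(\theta)}{1 - \bar\chfn(\theta)}\, d\theta = 1 - \beta,
\]
with $\beta$ the constant from \eqref{defbeta}. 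Both $\chfn - \bar\chfn$ and $1 - \bar\chfn$ vanish quadratically near $\theta = 0$, and $1 - \bar\chfn > 0$ elsewhere on $\bT^d$ by \eqref{Rass}, so the Fourier integral is finite; the non-degeneracy assumption \eqref{ass:q00} keeps $\alpha_1 < 1$. Justifying the $s \uparrow 1$ limit (e.g.\ by Abel summability) then gives
\[
\sum_{n \ge 1}\E(f_n - f_{n-1})^2 = \frac{1 - \beta}{\beta} = \beta^{-1} - 1 < \infty.
\]

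With $L^2$-boundedness in hand, Doob's convergence theorem yields $f_n \to f$ almost surely and in $L^2(\bP)$, where $f$ is $\kS_{-\infty, -1}$-measurable with $\E f = 1$ and $\E f^2 = \beta^{-1} < \infty$ (consistent with \eqref{varf2}). Equation \eqref{finv} then follows by passing to the limit in the one-step recursion
\[
f_n(\om) = \sum_{x} \pi^\om_{-1, 0}(x, 0)\, f_{n-1}(T_{-1, x}\om),
\]
which is a finite sum because only $\abs{x} \le R$ contribute: for $\bP$-a.e.\ $\om$, each of the finitely many shifted subsequences $f_{n-1}(T_{-1, x}\om)$ converges to $f(T_{-1, x}\om)$ by $\bP$-invariance of $T_{-1,x}$, and the limit passes through the finite sum. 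The main technical obstacle is the $L^2$ bound in the recurrent dimensions $d = 1, 2$, where $G_{\bar q}(\cdot, 0)$ itself diverges; the mean-zero property $\sum_w c(w) = 0$ together with the quadratic vanishing of $\chfn - \bar\chfn$ at the origin provides the precise cancellation that keeps $\alpha_1$ finite and strictly less than $1$.
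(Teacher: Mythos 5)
Your proof is correct, and for the key $L^2$ bound it takes a genuinely different route from the paper. The paper does not compute $\sum_n \E(f_n-f_{n-1})^2$ exactly; it only bounds $\E(f_N^2) = \sum_y q^N(y,0)$ by proving a Green-function comparison lemma, $\sum_{k=0}^N q^k(x,0) \le C \sum_{k=0}^N \bar q^k(x,0)$, through an excursion decomposition of the $q$-chain at the origin together with Spitzer's potential-kernel asymptotics and a Wald-type renewal estimate; then $\sum_{k=0}^N \E(f_k^2) \le C(N+1)$ and the submartingale property finish. Your argument instead uses the orthogonal-increment identity $\E(f_n-f_{n-1})^2 = \sum_w c(w) q^{n-1}(w,0)$ with $c = q(0,\cdot)-\bar q(0,\cdot)$, the rank-one resolvent identity $\gamma_s = \alpha_s/(1-s\alpha_s)$, and a Fourier/Parseval evaluation $\alpha_1 = 1-\beta$, concluding $\sum_n \E(f_n-f_{n-1})^2 = \beta^{-1}-1$ exactly. (Since the increments are nonnegative, Abel summability does give ordinary summability, so the $s\uparrow 1$ passage is legitimate.) Your route is more computational but it buys something the paper defers to a later section: it delivers the precise variance $\Vvv[f] = \beta^{-1}-1$ of \eqref{varf2} as a free by-product, rather than just a finite bound, and it avoids invoking Spitzer's theorems T32.1/P30.2 and the renewal estimate. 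The paper's Green-function lemma, by contrast, is reused elsewhere (Lemma \ref{lm:GGbar} feeds into the covariance computations of Section \ref{covarsec}), so the authors get extra mileage from proving the cruder comparison. The remaining parts of your write-up (martingale identity via independence of the level-$(-n)$ environment, passage to the limit in the one-step recursion using $\bP$-invariance of the shifts and finiteness of the sum $|x|\le R$) match the paper's argument.
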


\begin{proof}
For $N\in\bZ_+$  define
\be
f_N(\om)= \sum_{z\in\bZ^d} \pi_{-N,0}^\om(z,0).
\label{def:fN}\ee
$f_N(\om)$ is $\kS_{-N,-1}$-measurable and a martingale
with $\E f_N=1$. By the martingale convergence theorem
we can define
\[
f(\om)=\lim_{N\to\infty}f_N(\om)
\quad\text{($\P$-almost sure limit).}\]
Property \eqref{finv} follows because all the sums
involved are finite:
\begin{align*}
&\sum_x f(T_{x,-1}\om)\pi_{-1,0}^\om(x,0)
=\lim_{N\to\infty} \sum_x f_N(T_{x,-1}\om)\pi_{-1,0}^\om(x,0)\\
&=\lim_{N\to\infty} \sum_{z,x} \pi_{-N-1,-1}^\om(z,x)\pi_{-1,0}^\om(x,0)
=\lim_{N\to\infty} \sum_{z} \pi_{-N-1,0}^\om(z,0) \\
&=\lim_{N\to\infty} f_{N+1}(\om)=f(\om).
\end{align*}
In Lemma \ref{lm:fNmoment2} below  we show the $L^2$ boundedness 
of the sequence $\{f_N\}$. 
This implies that $f_N\to f$ also in $L^2$ and thereby
implies the remaining statements $\E f=1$ and $\E(f^2)<\infty$.  
\end{proof}

The following addresses the positivity of $f$.

\begin{lemma}
$\P(f>0)=1$ if and only if there exists an $x$ such that $\P\{\pi_{0,1}(0,x)>0\} = 1$.
\end{lemma}

\begin{proof}
If there does not exist an $x$ as in the claim, then by independence of the environment and the finite step-size assumption we see that 
	\[\P\{\forall x: \pi_{-1,0}(x,0)=0 \} >0.\]� �
But then   \eqref{finv} implies that $\P(f=0)>0$.
Conversely, if there exists an $x$ as in the claim, then \eqref{finv} implies that if $f(\w)=0$ then $f(T_{x,-1}\w)=0$.� Shift-invariance implies the two events are in fact equal, almost surely.
This in turn implies that $\{f=0\}$ is a trivial event and since $\E[f]=1$ we have that $f>0$ a.s.
\end{proof}

To prove the $L^2$ estimate for $f_N$ we develop a Green
function bound for the Markov 
chain defined as the difference of two walks. 
Let $X^x_t$ and $\widetilde{X}^y_t$ be two independent walks in a
common environment $\om$, started at 
$x, y\in\bZ^d$, and $Y_t=X^x_t-\widetilde{X}^y_t$. 
Under the  averaged measure 
$Y_t$ is a Markov chain on $\Zb^d$ with transition probabilities 
$q(x,y)$ defined by \eqref{qxy}.  
$Y_t$ can be thought of 
as  a symmetric random walk on $\Zb^d$ whose transition
has been perturbed at the origin. The corresponding  homogeneous,
unperturbed random walk is $\Yhom_t$ with 
transition probability $\qhom$ in \eqref{qbarxy}.   Write
$P_x$ and $\Phom_x$ for the path probabilities of $Y_\abullet$ and $\Yhom_\abullet$.
Define hitting times of $0$ for both walks $Y_t$ and $\Yhom_t$ by 
\be\tau=\inf\{n\ge 1: Y_n=0\}
\quad\text{and}\quad 
\tauhom=\inf\{n\ge 1: \Yhom_n=0\}. 
\label{tau}\ee
Denote the $k$-step transition probabilities by $q^k(x,y)$ and $\qhom^k(x,y)$.

\begin{lemma}
There exists a constant $C<\infty$ such that
 for all $x \in {\bZ}^d$ and $N \in \bN$, 
\[  \sum_{k=0}^{N} q^k(x,0) \le C\sum_{k=0}^{N} \qhom^k(x,0).\]
\label{lm:GGbar} \end{lemma}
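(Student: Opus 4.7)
The plan is to reduce to the case $x=0$ via a first-passage decomposition, then compare partial Green's functions at the origin through a resolvent identity plus a Tauberian step. Since $q(x,\cdot)=\bar q(x,\cdot)$ for every $x\ne 0$ (two walkers at distinct sites jump via independent rows of $\om$), starting from $x\ne 0$ the walks $Y$ and $\bar Y$ have identical laws up to their first visit to $0$, so $P_x(\tau=j)=\bar P_x(\taubar=j)$ for every $j\ge 1$. Decomposing at this visit yields
\[ \sum_{k=0}^N q^k(x,0)=\sum_{j=1}^N \bar P_x(\taubar=j)\sum_{k=0}^{N-j}q^k(0,0), \]
and analogously for $\bar q$. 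Thus it suffices to show $U_N\le C\bar U_N$ where $U_N:=\sum_{k=0}^N q^k(0,0)$ and $\bar U_N:=\sum_{k=0}^N\bar q^k(0,0)$.

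For the origin case I would use a resolvent identity. Write $\delta(v):=q(0,v)-\bar q(0,v)$; since $q$ and $\bar q$ agree off $0$, $\delta$ is supported at $x=0$ with $\sum_v\delta(v)=0$. With $\hat U(s):=\sum_n q^n(0,0)s^n$ and $\bar G_s(v,0):=\sum_n s^n\bar q^n(v,0)$, the perturbation formula $G_s=\bar G_s+sG_s(q-\bar q)\bar G_s$ collapses to
\[ \hat U(s)\;=\;\frac{\bar G_s(0,0)}{1-s\sum_v\delta(v)\bar G_s(v,0)}\;=\;\frac{\bar G_s(0,0)}{1-s\sum_{v\ne 0}|\delta(v)|\bigl(\bar G_s(0,0)-\bar G_s(v,0)\bigr)}. \]
The denominator is positive on $[0,1)$ since $\bar G_s(0,0)\ge\bar G_s(v,0)$ by the maximum principle for the symmetric walk $\bar q$ (which follows from $\bar\chfn(\theta)\ge 0$). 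In the recurrent case $d=1,2$, its limit as $s\uparrow 1$ equals $\sum_v q(0,v)\,a(v)$, where $a$ is the potential kernel of $\bar q$, via the identity $\sum_v\bar q(0,v)a(v)=1$. Assumption~\eqref{ass:q00} gives $q(0,0)<1$, so $q(0,\cdot)$ has mass off $0$; together with $a(v)>0$ for $v\ne 0$ this forces the limit to be strictly positive. The transient case $d\ge 3$ is analogous using the finite Green's function. Hence $M:=\sup_{s\in[0,1)}\hat U(s)/\bar G_s(0,0)<\infty$.

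The final step converts this generating-function bound to the partial-sum bound. At $s=e^{-1/N}$, monotonicity gives $\hat U(s)\ge s^N U_N=e^{-1}U_N$. On the other side, a local CLT estimate $\bar q^n(0,0)\le Cn^{-d/2}$ controls the tail $\sum_{n>N}s^n\bar q^n(0,0)\le C'\bar U_N$ (routine in $d=1,2$ where $\bar U_N$ grows like $\sqrt N$ or $\log N$, and immediate in $d\ge 3$ where $\bar G_\infty<\infty$), so $\bar G_s(0,0)\le C''\bar U_N$. Combining, $U_N\le eMC''\bar U_N$, and the reduction in the first paragraph extends this to all $x$.

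The main obstacle I anticipate is this Tauberian conversion: the generating-function inequality $\hat U(s)\le M\bar G_s(0,0)$ does not automatically yield the partial-sum bound $U_N\le C\bar U_N$, and bridging the gap requires the local CLT for $\bar q$ to control the tail at $s=e^{-1/N}$. Everything else---the reduction to $x=0$, the resolvent identity, and the positivity of the denominator using~\eqref{ass:q00} together with the potential kernel identity $\sum_v\bar q(0,v)a(v)=1$---is essentially routine given the structure.
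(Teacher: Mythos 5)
Your reduction to $x=0$ via the first-passage decomposition is exactly the paper's, but your treatment of the origin case takes a genuinely different route. The paper argues probabilistically: it decomposes visits to $0$ into geometric sojourns (so $q(0,0)<1$ from assumption \eqref{ass:q00} gives a geometric bound), bounds the number $J_N$ of excursions by renewal counts $K^z_N$ for the $\bar q$-walk started at $|z|\le 2R$, invokes Spitzer's T32.1 to compare $\bar P_z(\bar\tau>n)$ with $\bar P_0(\bar\tau>n)$, and finishes with Wald's identity and $\bar E_0(M_N)\le 2\bar G_N(0,0)$. You instead pass to the generating function $\hat U(s)$, use the rank-one resolvent formula to get $\hat U(s)=\bar G_s(0,0)/D(s)$ with $D(s)=1-s\sum_v\delta(v)\bar G_s(v,0)$, identify $\lim_{s\uparrow 1}D(s)=\beta>0$ via potential-kernel identities, and then Tauberize at $s=e^{-1/N}$ using a local CLT bound on $\bar q^n(0,0)$. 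This is a valid alternative; it even produces the sharp constant $1/\beta$, which the paper does not make explicit. The cost is heavier machinery (local CLT, Abel convergence of $\bar G_s(0,0)-\bar G_s(v,0)$ to $\bar a(v)$), whereas the paper's argument is self-contained renewal theory plus one Spitzer ratio limit.

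One intermediate assertion in your write-up is wrong, though it is not actually needed. You rewrite the denominator as $1-s\sum_{v\ne0}|\delta(v)|\bigl(\bar G_s(0,0)-\bar G_s(v,0)\bigr)$, which implicitly claims $\delta(v)=q(0,v)-\bar q(0,v)\le 0$ for all $v\ne 0$. That sign can fail. Take $d=1$ with $\om_{0,0}=(0,1,0)$ or $(1/2,0,1/2)$ on $\{-1,0,1\}$, each with probability $1/2$; this satisfies \eqref{ass:q00} and \eqref{Rass}, gives $p=(1/4,1/2,1/4)$, and yields $\bar q(0,2)=1/16$ while $q(0,2)=\E[\om(-1)\om(1)]=1/8$, so $\delta(2)=1/16>0$. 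Fortunately the two facts you actually use survive without the sign claim: (i) $D(s)>0$ for $s\in[0,1)$ follows directly from the resolvent identity itself, since $D(s)=\bar G_s(0,0)/\hat U(s)$ and both $\hat U(s)$ and $\bar G_s(0,0)$ are positive and finite for $s<1$; and (ii) using $\sum_v\delta(v)=0$ and the Abel limit $\bar G_s(0,0)-\bar G_s(v,0)\to\bar a(v)$ one gets $\lim_{s\uparrow1}D(s)=1+\sum_v\delta(v)\bar a(v)=1+\sum_v q(0,v)\bar a(v)-\sum_v\bar q(0,v)\bar a(v)=\beta>0$, by the identities $\sum_v\bar q(0,v)\bar a(v)=1$ and $\sum_v q(0,v)\bar a(v)=\beta$ together with $\bar a(v)>0$ for $v\ne 0$ and $q(0,0)<1$. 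Remove the erroneous absolute-value step, replace it with this direct justification, and the argument is correct.
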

\begin{proof}  Suppose we had the bound for $x=0$. 
Then it follows  for $x\ne 0$: 
\begin{align*}
 \sum_{k=0}^{N} q^k(x,0) &= E_x \Big[\sum_{k=0}^N \ind_{\lbrace Y_k=0\rbrace}\Big]
 = E_x\Big[\sum_{i=0}^N \ind_{\{\tau=i\}}\sum_{k=i}^N \ind_{\{Y_k=0\}}\Big]\\
 &= \sum_{i=0}^N P_x( \tau=i) \sum_{k=0}^{n-i} q^k(0,0) 
 \le  C \sum_{i=0}^N \Phom_x( \tauhom=i) 
\sum_{k=0}^{n-i} \qhom^k(0,0) \\
 &= C \sum_{k=0}^N\qhom^k(x,0)
\end{align*}

It remains to prove the result for  $x=0$. 
Let $\sigma_0=0$ and 
\[
\sigma_{j+1}=\inf\{n>\sigma_j: \text{ $Y_n=0$ and
$Y_k\neq 0$ for some $k\in\{\sigma_j+1, \dotsc,n-1\}$}\}.
\]
These are the successive times of arrivals to $0$ 
following excursions away from 
$0$. 
Let $W_j$, $j\geq 0$, be the durations of the sojourns at $0$,
in other words
\[
Y_n=0 \text{ iff }\sigma_j\leq n<\sigma_j+W_j
\text{ for some $j\geq 0$.}
\]
Sojourns are geometric and independent of the past,
 so on the event $\{\sigma_j<\infty\}$,
\[
E_0(W_j \,\vert\, \cF^Y_{\sigma_j}) = \frac1{1-q(0,0)}.
\]
Let $J_N=\max\{j\geq 0: \sigma_j\leq N\}$ mark the
  last sojourn at $0$ that started by time $N$.
Then 
\begin{align*}
E_0\Bigl[\, \sum_{k=0}^N \ind\{Y_k=0\}\Bigr] &\leq
E_0\Bigl[\, \sum_{j=0}^{J_N} W_j \Bigr]
= \sum_{j=0}^\infty E_0\bigl[\, \ind\{\sigma_j\leq N\} W_j\bigr] \nn\\
&= \frac1{1-q(0,0)} E_0(1+J_N).
\end{align*}
Assumption \eqref{ass:q00} guarantees that $q(0,0)<1$.  

It remains to bound $E_0(1+J_N)$ in terms of 
$\sum_{k=0}^N \qhom^k(0,0)$.  The key is that once the Markov
chain $Y_k$ has left the origin, it follows the same transitions as
 the homogeneous walk  $\Yhom_k$ until the next visit to $0$.
For $z\ne 0$ let 
\[ K^{z}_N=\inf \{k\ge 1 :T^{z}_1+T^{z}_2+\cdots+ T^{z}_k \ge N\}\]
 where the $\{T^{z}_i\}$ are i.i.d.\ with common distribution 
$\Phom_z\{\tauhom\in\cdot\,\}$. Imagine constructing the path $Y_k$ 
so that every step away from $0$ is followed by an excursion
of $\Yhom_k$ that ends at $0$ (or continues forever if $0$ is 
never reached). The step bound \eqref{step-bd} implies that 
 $P_0\{\abs{Y_1}\le 2R\}=1$.  Then  there is stochastic 
dominance that gives 
\be  E_0(J_N) \le  \sum_{z\ne 0\,:\, \abs{z}\le 2R} \Ehom(K^z_N).
\label{JNK}\ee

By  T32.1 in Spitzer \cite[p.~378]{spitzer}, for $z\ne 0$
\be \label{a} \lim_{n \to \infty} 
\frac{\Phom_z(\tauhom>n)}{\Phom_0(\tauhom>n)} =\ahom(z) \ee
where the potential kernel $\ahom$ is 
\be \ahom(z)=\lim_{n \to \infty} \Bigl\{\; \sum_{k=0}^n\qhom^k(0,0)
-\sum_{k=0}^n\qhom^k(z,0)\Bigr\}. \label{defbara}\ee
By  P30.2 in \cite[p.~361]{spitzer}  $\ahom(z)>0$ for all $z \ne 0$
for $d=1,2$. 
 For $d\ge 3$ by transience
\begin{align*} 
\ahom(z)=\sum_{k=0}^{\infty}\qhom^k(0,0)-
\sum_{k=0}^{\infty}\qhom^k(z,0)
= (1-\Fhom(z,0))\sum_{k=0}^{\infty}\qhom^k(0,0)>0
\end{align*}
where $\Fhom(z,0)=\Phom_z\{\,\text{$\Yhom_n =0$ 
for some $n\ge 1$}\}<1.$ 

From \eqref{a} and $\ahom(z)>0$,  
there exist $0<c(z),C(z)<\infty$ such that for all $n$,
\[ c(z) \Phom_0(\tauhom>n) \le \Phom(T^{z}_1 >n) 
\le C(z) \Phom_0(\tauhom>n) \]
and hence
\[  c(z)\Ehom_0\big[\tauhom \wedge N\big] \le \Ehom\big[T^{z}_1 \wedge N\big]\le C(z) \Ehom_0\big[\tauhom \wedge N\big]. \]

By Wald's identity and some simple bounds
(see Exercice 4.4.1 in \cite[Sect.~4.4]{durr})
\[ \frac{N}{\Ehom\big(T^{z}_1 \wedge N\big)} \le \Ehom\big[K^{z}_N \big] 
\le \frac{2N}{\Ehom\big(T^{z}_1 \wedge N\big)}. \]
Let $\{\tauhom_i\}$ be i.i.d.~copies of $\tauhom$ from \eqref{tau} 
and put 
 \[ M_N =\inf\{ k\ge 1:\tauhom_1+\tauhom_2+\cdots+\tauhom_k \ge N \}. \]
Then we have a similar relation: 
\[ \frac{N}{\Ehom_0\big(\tauhom \wedge N\big)} \le \Ehom_0\big[M_N \big] \le \frac{2N}{\Ehom_0\big(\tauhom \wedge N\big)} \]
Combining the above lines: 
\be \Ehom\big[K^{z}_N\big] \le \frac{2N}
{ \Ehom\big[T^{z}_1 \wedge N\big]} 
\le  \frac{2N}{c(z) \Ehom_0\big[\tauhom \wedge N\big]}\le 
\frac{2}{c(z)} \Ehom_0\big[M_N\big]. \label{K2}\ee
Considering excursions of the $\Yhom$-walk away from $0$,
 \be \Ehom_0\big[M_N\big] \le 
1+\sum_{k=0}^N \qhom^k(0,0) \le 2\sum_{k=0}^N\qhom^k(0,0).
\label{MN}\ee 
The proof is now complete 
with a combination of \eqref{JNK}, \eqref{K2} and \eqref{MN}.
\end{proof}

From the previous lemma follows  the $L^2$ estimate for $f_N$
which completes the proof of Proposition 
\ref{finvprop}.  

\begin{lemma} There exists a 
constant $C<\infty$ such that $\E(f_N^2)\le C$ for all $N$.
\label{lm:fNmoment2}
\end{lemma}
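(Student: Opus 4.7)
My plan is to combine the martingale structure of $(f_N)_{N \ge 0}$ with Lemma~\ref{lm:GGbar} at the origin via a generating function identity. First, the one-step decomposition
\[ f_N = \sum_{x} \pi^\om_{-N+1, 0}(x, 0)\, c_{-N, x}, \qquad c_{-N, x} := \sum_{z} \om_{-N, z}(x - z), \]
together with $\E c_{-N, x} = 1$ and the independence of $c_{-N, \cdot}$ from $\kS_{-N+1, -1}$, shows that $(f_N)$ is an $L^2$-martingale and $\phi_N := \E(f_N^2)$ is nondecreasing in $N$.

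Next, I compute $\phi_N$ in closed form. Expanding the square, each summand $\E[\pi^\om_{-N, 0}(z_1, 0)\,\pi^\om_{-N, 0}(z_2, 0)]$ is the averaged probability that two walks (independent given $\om$) starting at $(z_1, z_2)$ both reach $0$ after $N$ steps. Translation invariance of $\bP$ under the change of variables $v = z_1 - z_2$ reduces the double sum to
\[ \phi_N = \sum_{v \in \bZ^d} q^N(v, 0), \]
where $Y_t = X^1_t - X^2_t$ is the averaged Markov chain with transition $q$ of~\eqref{qxy}. Conditioning $Y$ on its first visit to $0$---between which $Y$ evolves as the unperturbed homogeneous walk $\bar q$---gives the renewal identity $\phi_N = \sum_{k=0}^N a_k\, q^{N-k}(0, 0)$ with $a_0 := 1$ and $a_k := \sum_{v \ne 0} \bar P_v(\bar\tau = k)$. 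The same decomposition applied to $\bar q$ yields $\sum_{k=0}^N a_k \bar q^{N-k}(0, 0) = \sum_v \bar q^N(v, 0) = 1$, so with generating functions $G_q(s) := \sum_N q^N(0, 0)\, s^N$ and $G_{\bar q}$ analogous,
\[ \sum_{N \ge 0} \phi_N\, s^N \;=\; \frac{G_q(s)}{(1-s)\, G_{\bar q}(s)}. \]

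Finally, Lemma~\ref{lm:GGbar} at $x = 0$ gives $\sum_{k=0}^N q^k(0, 0) \le C \sum_{k=0}^N \bar q^k(0, 0)$ for every $N$. These partial sums are the nonnegative coefficients of the power series $G_q(s)/(1-s)$ and $G_{\bar q}(s)/(1-s)$, so the coefficient-wise comparison lifts to $G_q(s) \le C\, G_{\bar q}(s)$ on $[0, 1)$, and hence $(1-s) \sum_N \phi_N s^N \le C$. Since $\phi_N$ is nondecreasing, $\sum_N \phi_N s^N \ge \phi_M s^M / (1-s)$ for every $M$; thus $\phi_M s^M \le C$ and letting $s \uparrow 1$ yields $\phi_M \le C$ uniformly. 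The main technical step is the renewal derivation of the generating function in step two; the final passage from Lemma~\ref{lm:GGbar}'s coefficient-wise bound to the uniform bound on $\phi_N$ exploits positivity of all coefficients and the monotonicity of $\phi_N$ in an essential way.
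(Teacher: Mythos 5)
Your argument is correct and arrives at the same bound, but it takes a genuinely different route from the paper's. The paper uses the full force of Lemma~\ref{lm:GGbar} (at every $x$) to bound the Ces\`aro sums directly:
\[
\sum_{k=0}^{N}\E(f_k^2)=\sum_{x}\sum_{k=0}^{N}q^k(x,0)\le C\sum_{x}\sum_{k=0}^{N}\bar q^k(x,0)=C(N+1),
\]
and then uses monotonicity of $\E(f_N^2)$ (the submartingale property, as you also note) to upgrade the Ces\`aro bound to a pointwise bound. You instead invoke Lemma~\ref{lm:GGbar} only at $x=0$, but pay for it by proving a renewal identity $\phi_N=\sum_{k\le N}a_k\,q^{N-k}(0,0)$, checking the parallel identity $\sum_{k\le N}a_k\,\bar q^{N-k}(0,0)=1$, and passing to generating functions to get $\sum_N\phi_N s^N = G_q(s)/\bigl((1-s)G_{\bar q}(s)\bigr)$; the coefficient-wise inequality from Lemma~\ref{lm:GGbar} then becomes $G_q(s)\le C\,G_{\bar q}(s)$ on $[0,1)$, and the Tauberian step you run (using $\phi_N$ nondecreasing so that $\phi_M s^M\le(1-s)\sum_N\phi_N s^N\le C$) finishes it. Both proofs ultimately lean on the same two ingredients---Lemma~\ref{lm:GGbar} and monotonicity of $\E(f_N^2)$---so neither is strictly stronger; the paper's route is shorter because it absorbs the renewal structure into Lemma~\ref{lm:GGbar} itself (its proof already reduces the general-$x$ case to $x=0$ by a first-passage decomposition, essentially the same convolution you rederive). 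Your generating-function bookkeeping is clean and correct as written; the one point worth being explicit about is that for $s\in[0,1)$ all the series involved ($G_q$, $G_{\bar q}$, $A(s)=\sum a_k s^k$ with $a_k\le 1$, and hence their Cauchy product $\sum\phi_N s^N$) are finite, so the identities hold as honest equalities of numbers, not just of formal power series.
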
 

\begin{proof}
By translations 
\begin{align*}
\E(f_N^2)&= \sum_{x,z}\E\pi^\om_{-N,0}(x,0)\pi^\om_{-N,0}(z,0)\\
&=\sum_y\E P^\om\{X^y_N=\widetilde{X}^0_N\}
=\sum_y q^N(y,0).
\end{align*}
By the submartingale property
 $\E(f_N^2)$ is nondecreasing in $N$. Hence it  suffices
to show the existence of a constant $C$ such  that 
\be
\sum_{k=0}^N \E(f_k^2)\le C(N+1) \qquad\text{for all $N$.}
\label{intgoal8}\ee
From above, by Lemma \ref{lm:GGbar}  and the spatial homogeneity
of the $\Yhom$-walk, 
\begin{align*} 
\sum_{k=0}^N \E(f_k^2) &= 
 \sum_{k=0}^{N} \sum_x q^k(x,0) 
\le C\sum_{k=0}^{N} \sum_x  \qhom^k(x,0)\\
&=C\sum_{k=0}^{N} \sum_x  \qhom^k(0,x) = C(N+1).
\qedhere  \end{align*}
\end{proof}

Property \eqref{finv} implies that the probability measure 
$\P_\infty(d\om)$ $=$ $f(\om)\,\P(d\om)$ is invariant for the 
process $T_{X_n,n}\om$. 
Recall from \eqref{def:muom}  the product measure 
\be
\mu^{\rho,\om}(d\eta)=\bigotimes_{x\in\bZ^d} 
\pss^{\rho f(T_{x,0}\om)}\big(d\eta(x)\big)
\label{def:muom2}\ee
where $\pss^\lambda$ is  Poisson($\lambda$)  
 distribution.  By the definition of $f$,  
 $\mu^{\rho,\om}$ depends on $\om$ only through the levels
$\wlev_{-\infty,-1}$. 

\begin{lemma} The following holds for $\P$-a.e.~$\om$. 
Let $\eta_0$ be $\mu^{\rho,\om}$-distributed. Then
for all times $t\in\bZ_+$, under 
the evolution in the environment $\om$, $\eta_t$ is
$\mu^{\rho,T_{0,t}\om}$-distributed, and in particular independent
of the environment $\wlev_t$ at level $t$. 
\label{lm:inv1}\end{lemma}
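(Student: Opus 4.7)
The plan is to reduce the lemma to two ingredients: an $\om$-almost-sure identity that iterates the invariance relation \eqref{finv} forward in time, and the classical Poisson thinning/superposition property, which is available because the initial configuration is a product of Poissons and particles move independently given $\om$.

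First I would apply the shift $T_{s,y}$ to both sides of \eqref{finv} and reindex the sum (using $T_{-1,x}(T_{s,y}\om)=T_{s-1,y+x}\om$ and $\pi_{-1,0}^{T_{s,y}\om}(x,0)=\pi_{s-1,s}^\om(y+x,y)$) to obtain
\[ f(T_{s,y}\om) \;=\; \sum_{w\in\bZ^d} f(T_{s-1,w}\om)\,\pi_{s-1,s}^\om(w,y), \]
valid for all $s\in\bZ$ and $y\in\bZ^d$ on a single full $\P$-measure set of $\om$ (a countable union of the null sets from \eqref{finv} under these shifts). Iterating for $s=t,t-1,\dots,1$ and composing transitions via Chapman--Kolmogorov (finite sums only, so no convergence issue) yields the key identity
\[ f(T_{t,y}\om) \;=\; \sum_{x\in\bZ^d} f(T_{0,x}\om)\,\pi_{0,t}^\om(x,y) \qquad (\ast) \]
for every $t\in\bZ_+$ and $y\in\bZ^d$, $\P$-a.s.

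Next I would invoke Poisson splitting. Fix $\om$ on the full-measure set where $(\ast)$ holds. Under $P^\om_{\mu^{\rho,\om}}$ the variables $(\eta_0(x))_{x\in\bZ^d}$ are independent $\Gamma^{\rho f(T_{0,x}\om)}$-distributed, and conditionally on $\eta_0$ each particle travels independently from its start $x$ to some $y$ with probability $\pi_{0,t}^\om(x,y)$. The standard Poisson superposition/splitting theorem then gives that $(\eta_t(y))_{y\in\bZ^d}$ is a product of $\Gamma^{\lambda_y}$ variables with
\[ \lambda_y \;=\; \sum_{x\in\bZ^d} \rho f(T_{0,x}\om)\,\pi_{0,t}^\om(x,y) \;=\; \rho f(T_{t,y}\om) \]
by $(\ast)$, which is precisely $\mu^{\rho,T_{t,0}\om}$. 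A clean verification proceeds by computing the joint probability generating functional $E^\om_{\mu^{\rho,\om}}\bigl[\prod_y z_y^{\eta_t(y)}\bigr]$ one starting site $x$ at a time, using the Poisson p.g.f.\ $\mE[z^N]=e^{\lambda(z-1)}$ for $N\sim\Gamma^\lambda$ together with the independence of the walks across $x$.

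Finally, independence of $\eta_t$ from $\wlev_t$ is immediate: since $f$ is $\kS_{-\infty,-1}$-measurable, $f(T_{t,y}\om)$ is $\kS_{-\infty,t-1}$-measurable, so the conditional law $\mu^{\rho,T_{t,0}\om}$ of $\eta_t$ depends on $\om$ only through $\wlev_{-\infty,t-1}$, which is independent of $\wlev_t$ under the i.i.d.\ hypothesis \eqref{Pass}. The only delicate point in the whole argument is the validity of Poisson splitting with a countably infinite family of sources, but this is automatic because $(\ast)$ identifies the Poisson parameter $\lambda_y$ with the finite quantity $\rho f(T_{t,y}\om)$ on the chosen full-measure set, so no separate summability check is required.
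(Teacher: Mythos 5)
Your proof is correct and uses the same two ingredients as the paper's: the invariance identity \eqref{finv} for the density $f$ (iterated to the $t$-step kernel) and Poisson thinning/superposition, followed by the observation that $f(T_{t,\cdot}\om)$ is $\kS_{-\infty,t-1}$-measurable. The paper phrases the argument as an induction on $t$, applying Poisson displacement one time step at a time, whereas you first derive the $t$-step identity $(\ast)$ and then apply the displacement theorem once; this is a cosmetic reorganization, not a different route.
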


\begin{proof} Consider the evolution under a fixed $\om$.
 The claim made in the lemma is true at time $t=0$ by the
construction. Suppose it is true up to time $t-1$. Then over
$x\in\bZ^d$ 
the variables $\eta_{t-1}(x)$ are independent
Poisson variables with means $\rho f(T_{x,t-1}\om)$.  
Each particle at site $x$ chooses its
next position $y$ independently with probabilities
$\pi_{t-1,t}^\om(x,y)$. As with marking  a Poisson process
with independent coin flips, the consequence is that 
 the numbers of particles going
from $x$ to $y$ are independent Poisson variables with means
$\rho f(T_{x,t-1}\om)\pi_{t-1,t}^\om(x,y)$, over
all pairs $(x,y)$.  Since sums of independent
Poissons are Poisson, the variables
$(\eta_t(y))_{y\in\bZ}$ are again independent Poissons and
$\eta_t(y)$ has mean
\begin{align*}
\sum_x \rho f(T_{x,t-1}\om)\pi_{t-1,t}^\om(x,y)
&=\sum_z \rho f(T_{z,-1}T_{y,t}\om)\pi_{-1,0}^{T_{y,t}\om}(z,0)\\
&=\rho f(T_{y,t}\om).
\end{align*}
The last equality is from \eqref{finv}.  

We have shown that $\eta_t=(\eta_t(y))_{y\in\bZ^d}$ has 
distribution $\mu^{\rho,\,T_{0,t}\om}$. This measure
is a function of $\wlev_{-\infty,t-1}$, hence  independent of $\wlev_t$ under
$\P$.  
\end{proof} 

Recall from \eqref{defmuu}   the averaged measure 
$
\mu^{\rho}=\int \mu^{\rho,\om}\,\P(d\om).
$

\begin{lemma}
The measure $\mu^\rho$ is invariant and ergodic under
spatial shifts $\shift_x$.    The tail $\sigma$-field of
the state space $\bZ_+^{\bZ^d}$ is trivial under $\mu^\rho$. 
\label{erglm}\end{lemma}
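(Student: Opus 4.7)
\emph{Plan.} The lemma consists of three claims---spatial invariance, spatial ergodicity, and tail triviality---which I address in turn.

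For spatial invariance, the key identity is $(\shift_x)_*\mu^{\rho,\om}=\mu^{\rho,T_{0,x}\om}$, which holds because both sides are Poisson products with site-$y$ parameter $\rho f(T_{0,y+x}\om)=\rho f(T_{0,y}T_{0,x}\om)$. Combined with the $T_{0,x}$-invariance of $\P$, this gives $\int F\circ\shift_x\,d\mu^\rho=\int F\,d\mu^\rho$ for every bounded measurable $F$.

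For ergodicity I would prove the stronger property that $\mu^\rho$ is mixing under $\shift_x$. Take cylinder functions $F,G$ supported in a finite set $A\subset\bZ^d$ and $|x|$ large enough that $A\cap(A+x)=\emptyset$; the functions $F(\eta)$ and $G(\shift_x\eta)$ then depend on disjoint coordinates and, under the product measure $\mu^{\rho,\om}$, factor as
\[
\int F(\eta)G(\shift_x\eta)\,d\mu^{\rho,\om}(\eta)=G_F(\om)\,G_G(T_{0,x}\om),\qquad G_F(\om):=\int F\,d\mu^{\rho,\om}.
\]
Integrating over $\P$ and using that $\P$ is mixing under $T_{0,x}$ (since $\P$ is an i.i.d.\ product over $\bZ\times\bZ^d$), the right-hand side tends to $\E[G_F]\,\E[G_G]=(\int F\,d\mu^\rho)(\int G\,d\mu^\rho)$ as $|x|\to\infty$. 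Density of cylinder functions in $L^2(\mu^\rho)$ extends this to all bounded measurables.

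For tail triviality, which is the main obstacle, let $A\subseteq\bZ_+^{\bZ^d}$ be a tail event. Kolmogorov's 0-1 law applied to the product measure $\mu^{\rho,\om}$ yields $\phi(\om):=\mu^{\rho,\om}(A)\in\{0,1\}$ for $\P$-a.e.\ $\om$, so the task reduces to showing $\phi$ is $\P$-a.s.\ constant. My strategy is to exhibit $\phi$ as measurable with respect to the $\P$-spatial tail $\bigcap_r\sigma(\om_{s,z}:|z|\ge r)$, which is $\P$-trivial by Kolmogorov for the i.i.d.\ product $\P$. I would approximate $f$ in $L^2(\P)$ by $f_N$ from \eqref{def:fN}: since $f_N(T_{0,y}\om)$ is $\sigma(\om_{s,z}:-N\le s\le-1,\,|z-y|\le NR)$-measurable, the field $(f_N(T_{0,y}\om))_y$ is $2NR$-range-dependent in $y$. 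Because $A$ is tail, $\phi$ is unchanged by altering finitely many Poisson parameters; combined with the localization of $f_N$, one should then be able to approximate $\phi$ in $L^2(\P)$ by functions in $\sigma(\om_{s,z}:|z|\ge r-NR)$ and send $N\to\infty$ followed by $r\to\infty$ to place $\phi$ in the $\P$-spatial tail. The delicate point is the $L^2(\P)$-continuity of $\om\mapsto\mu^{\rho,\om}(A)$ under perturbations of the parameter sequence: a naive Hellinger estimate fails because generic perturbations of $(\rho f(T_{0,y}\om))_y$ give mutually singular Poisson products, so one must exploit the particular structure of tail events together with the spatial $\P$-ergodicity of $(f(T_{0,y}\om))_y$.
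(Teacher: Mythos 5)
Your invariance argument matches the paper. Your ergodicity argument is correct but takes a genuinely different route: the paper derives ergodicity directly from tail triviality of $\mu^\rho$, whereas you prove the stronger statement that $\mu^\rho$ is mixing under spatial shifts by factoring $\int F(\eta)G(\shift_x\eta)\,d\mu^{\rho,\om}$ into $G_F(\om)G_G(T_{0,x}\om)$ and invoking mixing of the i.i.d.\ product $\P$ under $T_{0,x}$. That is a clean, self-contained alternative and is perfectly valid.

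The tail triviality argument, however, has a real gap, and you have correctly diagnosed it yourself in your last sentence. You want to replace the Poisson parameters $\rho f(T_{0,y}\om)$ by $\rho f_N(T_{0,y}\om)$ and argue that $\mu^{\rho,\om}(A)$ is close to the corresponding quantity for the truncated parameters; but the resulting infinite Poisson products are generically mutually singular, so there is no $L^2(\P)$ continuity of $\om\mapsto\mu^{\rho,\om}(A)$ to lean on, and no amount of ``exploiting the structure of tail events'' rescues a measure-approximation scheme of this shape. The resolution the paper uses avoids approximating the measure entirely. The point is that you do not need $\mu^{\rho,\om}_N(A)\to\mu^{\rho,\om}(A)$; you only need to show that the map $\om\mapsto\mu^{\rho,\om}(A)$, built from the genuine $f$, is itself (a.s.\ equal to) a tail-measurable function of $\om$. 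Two observations give this exactly, with no limits of measures. First, since $A$ is a tail event, $\mu^{\rho,\om}(A)$ is a Borel function of the parameter collection $\{f(T_{0,x}\om):\lvert x\rvert>K\}$ for \emph{every} $K$ --- the near-origin parameters are literally irrelevant, not approximately so. Second, for the space-time ball $\Lambda=\{(s,z):\lvert s\rvert+\lvert z\rvert\le M\}$ and $\lvert x\rvert>(R+1)M$, each $f_N(T_{0,x}\om)$ depends only on $\{\om_{s,z}:s\le -1,\ \lvert z-x\rvert\le R\lvert s\rvert\}$, a cone disjoint from $\Lambda$; hence the a.s.\ limit $f(T_{0,x}\om)$ is a.s.\ equal to a function of $\om$ outside $\Lambda$. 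Here the $f_N$ are used only to establish measurability of $f(T_{0,x}\om)$ with respect to $\sigma(\om_{s,z}:(s,z)\notin\Lambda)$; they are never substituted into the Poisson measure. Combining the two observations with $K=(R+1)M$, the quantity $\mu^{\rho,\om}(A)$ is a.s.\ $\sigma(\om_{s,z}:(s,z)\notin\Lambda)$-measurable for every $M$, hence $\P$-tail-measurable, hence a.s.\ constant by Kolmogorov's $0$-$1$ law for the i.i.d.\ product $\P$. Your write-up names both ingredients but misassembles them: once you see that the argument is exact rather than a limiting one, the Hellinger worry evaporates.
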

\begin{proof}
Invariance under $\theta_x$ comes from 
$\mu^{\rho,\om}\circ\shift_x^{-1}=\mu^{\rho,T_{x,0}\om}$ 
and the invariance of $\bP$. 
Ergodicity will follow from tail triviality. 
 
Let  $B\subseteq\bZ_+^{\bZ^d}$ be  
a tail event.   Then by Kolmogorov's 0-1 law $\mu^{\rho,\om}(B)\in\{0,1\}$ for each $\om$.
We need to show that $\mu^{\rho,\om}(B)$ is $\bP$-a.s.\ constant.
For this it suffices to show that 
  $ \mu^{\rho,\om}(B)$ is itself  (almost surely) a tail measurable function of $\om$. 

 Consider a  ball $\Lambda=\{(z,s): \abs{s}+\abs{z}\le M\}$ in the space-time lattice
 $\bZ^{d}\times\Z$.  Since the step size of the
walks is bounded by $R$,   for each $x\in\Z^d$ and $N\ge 1$
\[  f_N(T_{x,0}\om)=\sum_{z\in\bZ^d} \pi_{-N,0}^\om(z,x)  \]
is a function of the environments  $\{\om_{z,s}: s\le -1, \abs{z-x}\le R\abs{s}\}$. 
Consequently, if $\abs{x}>(R+1)M$,  the entire sequence $\{   f_N(T_{x,0}\om)\}_{N\in\N}$ is 
a function of the environments outside $\Lambda$, and then so is (almost surely)
the limit $f(T_{x,0}\om)$.  Since $B$ is tail measurable,   $ \mu^{\rho,\om}(B)$
is a function of $\{ f(T_{x,0}\om): \abs{x}>(R+1)M\}$ and thereby a function 
of environments outside $\Lambda$.  Since $\Lambda$ was arbitrary, 
we  conclude that $ \mu^{\rho,\om}(B)$ is  (almost surely) a tail measurable function of $\om$.  
\end{proof}

\begin{proof}[Proof  of the first part of Theorem \ref{invthm} {\rm(}except for uniqueness{\rm)}] 
Invariance of $\mu^\rho$ for the process follows by
averaging out $\om$ in the result of Lemma \ref{lm:inv1}. 
Spatial invariance,  ergodicity and tail triviality of $\mu^\rho$ are in
Lemma \ref{erglm}. That $\int \eta(0) \,d\mu^{\rho}=\rho$ follows
from the definition of $\mu^\rho$. 

We prove the ergodicity of the process $\eta_\abullet$ under the 
time-shift-invariant path measure $P_{\mu^\rho}$.  
We use the notation $\mu^\rho$ also for the joint measure
$\mu^\rho(d\om,d\eta)=\bP(d\om)\mu^{\rho,\om}(d\eta)$ and not only for 
the marginal on $\eta$.  
Let $\cJ$ be the $\sigma$-algebra of invariant sets on the state space of the
particle system:
\[ \cJ=\{ B\subseteq\bZ_+^{\bZ^d}:   \ind_B(\eta)=P_\eta\{\eta_1\in B\} 
\ \text{ for $\mu^\rho$-a.s.\ $\eta$ } \}  \]
  By Corollary 5 on p.~97 of \cite{rosenblatt} it suffices to show that $\cJ$ is trivial.  
We establish triviality of $\cJ$ by showing that 
$E^{\mu^\rho}[\psi\,\vert\,\cJ]$ is almost surely a constant for an arbitrary  bounded cylinder
function $\psi$ on $\bZ_+^{\bZ^d}$.  
 
 Let $\eta^{a,b}$ denote the configuration obtained by moving one  particle from site 
$a$ to site $b$,  if possible:  $\eta^{a,b}=\eta$ if $\eta(a)=0$, while if $\eta(a)>0$,
\[  \eta^{a,b}(x)=\begin{cases} \eta(a)-1 &x=a\\  \eta(b)+1 &x=b\\ \eta(x) &x\ne a,b. \end{cases} \]
 
\begin{lemma}  There exists a version of $E^{\mu^\rho}[\psi\,\vert\,\cJ]$
such that    for all $\eta\in\bZ_+^{\bZ^d}$  
and $a,b\in\bZ^d$,
$E^{\mu^\rho}[\psi\,\vert\,\cJ](\eta)=E^{\mu^\rho}[\psi\,\vert\,\cJ](\eta^{a,b})$.   
\label{condexplm}\end{lemma}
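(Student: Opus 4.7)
The plan is to construct a single explicit version of $E^{\mu^\rho}[\psi\,|\,\cJ]$ as a deterministic function of $\eta$, and to use the coupling of Section~\ref{coupsec} to show that this function cannot distinguish $\eta$ from $\eta^{a,b}$. For every $\eta\in\bZ_+^{\bZ^d}$ set
\[ g_N(\eta):=\frac{1}{N}\sum_{t=0}^{N-1} E_\eta[\psi(\eta_t)] \quad\text{and}\quad g(\eta):=\limsup_{N\to\infty} g_N(\eta). \]
Note that $g_N$ is well defined for any initial $\eta$ because $\psi$ is a cylinder function and jumps are bounded, so only finitely many particles influence $\psi(\eta_t)$. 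The two claims to verify are: (i) $g=E^{\mu^\rho}[\psi\,|\,\cJ]$ holds $\mu^\rho$-a.s.; and (ii) for every $\eta$ and every $a,b\in\bZ^d$, $|g_N(\eta)-g_N(\eta^{a,b})|\to 0$ as $N\to\infty$.

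For (i) I would apply Birkhoff's ergodic theorem to the stationary Markov process $\eta_\centerdot$ under $P_{\mu^\rho}$: the time averages $N^{-1}\sum_{t<N}\psi(\eta_t)$ converge $P_{\mu^\rho}$-a.s.\ to the conditional expectation of $\psi(\eta_0)$ given the path-invariant $\sigma$-algebra, which by the Markov property coincides with $E^{\mu^\rho}[\psi\,|\,\cJ](\eta_0)$. Conditioning on $\eta_0$ and invoking bounded convergence gives $g_N(\eta_0)\to E^{\mu^\rho}[\psi\,|\,\cJ](\eta_0)$ for $\mu^\rho$-a.s.\ $\eta_0$, whence $g(\eta_0)$ equals the conditional expectation on this full-measure set.

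The substantive step is (ii). Run the coupling of Section~\ref{coupsec} starting from $(\eta_0,\zeta_0)=(\eta,\eta^{a,b})$. Assuming $\eta(a)\ge 1$ (otherwise $\eta^{a,b}=\eta$), the initial discrepancy consists of exactly one $+$ particle at $a$ and one $-$ particle at $b$. These two walks move in the common environment $\om$ and may annihilate; until they do, their marginal distributions under the averaged measure $P$ are $p^{(t)}(a,\cdot)$ and $p^{(t)}(b,\cdot)$ where $p(\cdot)=\E\om_{0,0}(\cdot)$. Since $\psi$ depends only on occupation at sites in some finite set $\Lambda$, the event $\{\psi(\eta_t)\ne\psi(\zeta_t)\}$ forces a discrepancy particle to sit in $\Lambda$ at time $t$, so by the coupling and a union bound
\[ \bigl|E_\eta[\psi(\eta_t)] - E_{\eta^{a,b}}[\psi(\eta_t)]\bigr| \le 2\|\psi\|_\infty \sum_{y\in\Lambda}\bigl(p^{(t)}(a,y)+p^{(t)}(b,y)\bigr). \]
Averaging over $t<N$ and using the local CLT (available thanks to bounded steps and the aperiodicity assumption~\eqref{Rass}) which gives $p^{(t)}(x,y)=O(t^{-d/2})$, the right-hand side Ces\`aro sum is of order $N^{-1/2}$, $N^{-1}\log N$, or $N^{-1}$ in dimensions $d=1$, $d=2$, and $d\ge 3$ respectively, hence $o(1)$ in every dimension.

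Combining (i) and (ii): for every $\eta$ and every $a,b$,
\[ g(\eta)=\limsup_{N\to\infty} g_N(\eta)=\limsup_{N\to\infty} g_N(\eta^{a,b}) = g(\eta^{a,b}), \]
while $g$ is a genuine version of $E^{\mu^\rho}[\psi\,|\,\cJ]$. I expect the only delicate point to be the bookkeeping around versions: the Birkhoff identification of $g$ holds only $\mu^\rho$-a.s., but invariance under $\eta\mapsto\eta^{a,b}$ must hold for \emph{every} $\eta$. Defining $g$ as a deterministic $\limsup$ of a deterministic sequence and establishing the coupling bound in pointwise (not $L^1$) form is precisely what reconciles these two requirements.
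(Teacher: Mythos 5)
Your proposal is correct and follows the same overall blueprint as the paper's proof: define the version $\wt\psi(\eta)=\varlimsup_N N^{-1}\sum_{t<N}E_\eta[\psi(\eta_t)]$ pointwise (the paper cites Rosenblatt for the fact that this Ces\`aro $\limsup$ is a version of $E^{\mu^\rho}[\psi\,\vert\,\cJ]$, which is exactly your Birkhoff-plus-Markov argument), then show this function is insensitive to moving a single particle via the basic coupling with one $+$ and one $-$ discrepancy. The only place you depart from the paper is the sub-argument that the coupled discrepancies eventually stop influencing $\psi$. The paper proves the stopping time $\sigma=\inf\{t:\psi(\eta_s)=\psi(\zeta_s)\ \forall s\ge t\}$ is a.s.~finite by a soft recurrence/transience dichotomy (for $d\le2$ the $\bar q$-walk is recurrent so the discrepancies annihilate; for $d\ge3$ they are transient so they either annihilate or leave the support of $\psi$ forever), yielding pointwise decay of $\lvert E_\eta[\psi(\eta_t)]-E_{\eta^{a,b}}[\psi(\eta_t)]\rvert$. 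You instead bound the probability that either discrepancy walk is in the finite support of $\psi$ at time $t$ by $O(t^{-d/2})$ via the local CLT (available from bounded steps and aperiodicity \eqref{Rass}), which gives a uniform, case-free quantitative estimate whose Ces\`aro averages vanish. Both routes are valid; the paper's is softer in that it needs no local CLT, while yours is more uniform and quantitative. You also correctly identify the subtle point: the identification with the conditional expectation is only a.s., but the coupling bound is pointwise in $\eta$, which is precisely what lets the construction deliver a version with the required invariance on all of $\bZ_+^{\bZ^d}$.
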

\begin{proof}  By Corollary 2 on p.~93 of \cite{rosenblatt}, we can define a version 
$\wt\psi$ of $E^{\mu^\rho}[\psi\,\vert\,\cJ]$ pointwise by 
\[   \wt\psi(\eta)=\varlimsup_{n\to\infty}  \frac1n\sum_{t=0}^{n-1} E_\eta[\psi(\eta_t)]. \]
We show that  $\wt\psi(\eta)=\wt\psi(\eta^{a,b})$. 

 Assume $\eta(a)>0$. Consider the basic coupling  $P_{\eta,\eta^{a,b}}$ of two processes 
$(\eta_t,\zeta_t)$ with initial configurations
$(\eta_0,\zeta_0)=(\eta,\eta^{a,b})$, as described in Section \ref{coupsec}.   
Let  \[ \sigma=\inf\{t: \text{$\psi(\eta_s)=\psi(\zeta_s)$ for all $s\ge t$} \}. \]

We observe that $P_{\eta,\eta^{a,b}}\{\sigma<\infty\}=1$ in all dimensions.  
In dimensions $d\in\{1,2\}$  the  irreducible  $\qhom$-random walk is recurrent, 
hence the two 
discrepancies of opposite sign that start at $a$ and $b$ annihilate with probability 1.  
In dimensions $d\ge 3$ the discrepancies are marginally genuinely $d$-dimensional random 
walks by assumption \eqref{Rass}.  Thus  they are transient, and so either the discrepancies
annihilate or eventually they never return to the finite set of sites that support $\psi$.

The conclusion of the lemma follows:  
  \begin{align*}
&\abs{ E_\eta[\psi(\eta_t)]-E_{\eta^{a,b}}[\psi(\eta_t)]}  
 \le  2\norm{\psi}_\infty 
  P_{\eta,\eta^{a,b}}\{\sigma>t\}\longrightarrow 0 \quad\text{as $t\to\infty$.} 
\qedhere \end{align*}
\end{proof}
 
\begin{lemma}  Suppose $h$ is a bounded measurable function on $\bZ_+^{\bZ^d}$
 such that for all $a,b\in\bZ^d$,
  $h(\eta^{a,b})=h(\eta)$   $\mu^\rho$-a.s.
 Then there exists a tail measurable function $h_1$ such that $h=h_1$ $\mu^\rho$-a.s.
\label{mupartlm}\end{lemma}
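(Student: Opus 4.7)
The goal is to show that, for every finite $F\subset\bZ^d$, $h$ is $\mu^\rho$-almost surely measurable with respect to $\sigma(\eta|_{F^c})$. Given this, the reverse-martingale limit $h_1(\eta):=\lim_{R\to\infty}E^{\mu^\rho}[h\mid\sigma(\eta(x):\abs{x}>R)](\eta)$ is tail-measurable by construction and satisfies $h=h_1$ $\mu^\rho$-a.s.

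A preliminary step merges the countable family of $\mu^\rho$-null sets coming from the hypothesis into a single null set $N$, closed under the moves, so that $h(\eta^{a,b})=h(\eta)$ holds simultaneously for all $(a,b)\in\bZ^d\times\bZ^d$ and all $\eta\notin N$. Now fix finite $F\subset\bZ^d$. Moves $\eta\mapsto\eta^{a,b}$ with $a,b\in F$ preserve both $\eta|_{F^c}$ and $N_F:=\sum_{x\in F}\eta(x)$, and any two configurations with matching $(\eta|_{F^c},N_F)$ are connected by a finite chain of such intra-$F$ moves; hence $h(\eta)=\tilde h(\eta|_{F^c},N_F)$ for some measurable $\tilde h$. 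A move from $a\in F$ to $b\in F^c$ simultaneously decrements $N_F$ by one and adds $\delta_b$ to $\eta|_{F^c}$, so iterating $N_F$ times with a chosen pivot $b\in F^c$ yields
\[
h(\eta)=H\bigl(\eta|_{F^c}+N_F\,\delta_b\bigr),\qquad H(\zeta):=\tilde h(\zeta,0),
\]
where $H$ is itself invariant under moves among sites of $F^c$.

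The main obstacle is to remove the residual lump of size $N_F$ at $b$, i.e.\ to prove that $H(\zeta+n\delta_b)$ is $\mu^\rho$-a.s.\ independent of $n$ for typical $\zeta=\eta|_{F^c}$. My plan is to combine two ingredients. First, the Poisson-product structure of the quenched measure $\mu^{\rho,\om}$ makes $\eta|_F$ and $\eta|_{F^c}$ independent given $\om$, so conditional on $(\om,\eta|_{F^c})$ the variable $N_F$ is Poisson and classical Mecke-type identities for Poisson measures become available. Second, by spatial ergodicity of $\mu^\rho$ there are $\mu^\rho$-a.s.\ infinitely many sites $c\in F^c$ with $\eta(c)\ge 1$; for each such $c$ the move $\eta^{c,b}$ preserves $h$ and trades a particle at $c$ for one at $b$, giving the relation $H(\zeta+n\delta_b)=H(\zeta-\delta_c+(n+1)\delta_b)$. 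Chaining such relations along a sequence $c_1,c_2,\ldots\in F^c$ tending to infinity and invoking the tail-triviality part of Lemma~\ref{erglm} to absorb the far-field changes should force the $n$-dependence to vanish. Running this argument for every finite $F$ then yields the desired tail measurability of $h$.
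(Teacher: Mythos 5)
There is a genuine gap at the crux of your argument. You correctly isolate the removal of the residual lump at the pivot site $b$ as the main obstacle, but the proposed resolution is circular. The move $\eta^{c,b}$ yields only $H(\zeta+n\delta_b)=H(\zeta-\delta_c+(n+1)\delta_b)$, which relates values at \emph{different} background configurations $\zeta$; to chain these along $c_1,c_2,\dots\to\infty$ and conclude that the $n$-dependence vanishes, you must already know that $h$ is insensitive to changes at distant sites --- which is precisely what the lemma asserts. Tail triviality of $\mu^\rho$ (Lemma~\ref{erglm}) says that tail-measurable \emph{events} have probability $0$ or $1$; it implies nothing about the dependence of a generic bounded function on far-away coordinates unless that function is already known to be tail measurable. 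The Mecke-identity ingredient is named but never developed into an estimate, so the obstacle you flag is not actually overcome.

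A secondary gap sits in your preliminary step: asserting that the exceptional null set $N$ can be taken closed under the moves $\eta\mapsto\eta^{a,b}$ requires that this map push $\mu^\rho$ forward to a measure absolutely continuous with respect to $\mu^\rho$, with a density one can control. The Radon--Nikodym factor is a Poisson-density ratio of the form $f(T_{0,a}\om)\,(\eta(a)+1)^{-1}\cdot\eta(b)\,f(T_{0,b}\om)^{-1}$, and since $f$ is not bounded away from $0$ the last factor is a priori unbounded. The paper's proof handles both difficulties at once by an $L^2$ approximation: approximate $h$ by a bounded cylinder function $\tilde h$ with $E^{\mu^\rho}\abs{h-\tilde h}^2<\e^2$, choose an $\om$-dependent destination $b(\om)$ so that $f(T_{0,b(\om)}\om)\ge 1/4$ \emph{and} $\tilde h$ ignores the coordinate $\eta(b(\om))$, and then bound the change-of-variables integral by $C\e$ via Cauchy--Schwarz. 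Because $\tilde h$ is blind to $b(\om)$ no lump ever needs to be removed, and one obtains the cleaner identity $h(\eta)=h(\eta^{a})$ $\mu^\rho$-a.s.\ for the particle-\emph{removal} map $\eta^a$; iterating over cubes then produces the tail-measurable $h_1$. As written, your argument does not close.
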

 
\begin{proof}  To show approximate tail measurability we approximate by a cylinder
function and then move  particles far enough one by one.  
  (We learned this trick from \cite{sethIHP2001}.) 
Let $\eta^a$ denote the configuration obtained by removing one  particle from site
$a$ if possible:
\[  \eta^a(x)=\begin{cases} (\eta(a)-1)^+ &x=a\\  \eta(x) &x\ne a. \end{cases} \]

Let $\e>0$.  Pick a bounded cylinder function $\tilde h$ such that 
$E^{\mu^\rho}\abs{h-\tilde h}^2<\e^2$. 
For each $\om$ pick 
  $b(\om)\in\bZ^d$ so that  $f(T_{b(\om),0}\om)\ge 1/4$ 
  and  $\tilde h$ does not depend on the coordinate $\eta(b(\om))$.  Such $b(\om)$ 
  exists a.s.\ by the ergodic theorem since $\bE f=1$.  Choose 
  $b(\om)$ so that it is a measurable function.
Since $h(\eta)=h(\eta^{a,b(\om)})$ $\mu^\rho(d\om,d\eta)$-a.s.\ and $\tilde{h}(\eta^{a})=
\tilde{h}(\eta^{a,b(\om)})$ by choice of $b(\om)$, 
\begin{align*}  \int  \abs{h(\eta)-h(\eta^{a})}\,\mu^{\rho}(d\eta)  &\le \int \ind_{\{\eta(a)>0\}} \abs{h(\eta^{a,b(\om)})
-\tilde{h}(\eta^{a,b(\om)})}\,\mu^{\rho}(d\om, d\eta)\\
&\qquad\qquad \qquad\;+\;
\int \ind_{\{\eta(a)>0\}} \abs{\tilde{h}(\eta^{a})-h(\eta^{a})}\,\mu^{\rho}(d\eta). 
\end{align*}

In the next calculation we bound the first integral after the inequality. 
Write $\eta=(\eta',\eta(a),\eta(b(\om)))$ to make the
coordinates at $a$ and $b(\om)$ explicit. Change summation indices and apply Cauchy-Schwarz: 
\begin{align*}
&\int \ind_{\{\eta(a)>0\}} \abs{h(\eta^{a,b(\om)})-\tilde{h}(\eta^{a,b(\om)})}\,\mu^{\rho}(d\om, d\eta)\\
&=\bE \sum_{\substack{k>0\\ \ell\ge 0}} \pss^{\rho f(T_{a,0}\om)}(k)  
\pss^{\rho f(T_{b(\om),0}\om)}(\ell) 
\int \abs{h(\eta',k-1,\ell+1)\\
&\qquad\qquad \qquad\qquad \qquad\qquad \qquad\qquad
-\tilde{h}(\eta',k-1,\ell+1)}\,\mu^{\rho,\om}(d\eta')\\[3pt]
&=\bE \sum_{\substack{m\ge 0\\ n>0}}  \frac{f(T_{a,0}\om)}{m+1}\cdot\frac{n}{f(T_{b(\om),0}\om)}\cdot 
\pss^{\rho f(T_{a,0}\om)}(m)  \pss^{\rho f(T_{b(\om),0}\om)}(n) \\
&\qquad\qquad \qquad\qquad
\times \int \abs{h(\eta',m,n)-\tilde{h}(\eta',m,n)}\,\mu^{\rho,\om}(d\eta')\\[3pt]
&=\int  \ind_{\{\eta(b(\om))>0\}}  \frac{f(T_{a,0}\om)}{\eta(a)+1}\cdot\frac{\eta(b(\om))}{f(T_{b(\om),0}\om)}\cdot 
\abs{h(\eta)-\tilde{h}(\eta)}\,\mu^{\rho}(d\om, d\eta)\\
&\le \biggl\{ \bE \sum_{\substack{m\ge 0\\ n>0}}  \frac{f(T_{a,0}\om)^2}{(m+1)^2}
\cdot\frac{n^2}{f(T_{b(\om),0}\om)^2}\cdot 
\pss^{\rho f(T_{a,0}\om)}(m)  \pss^{\rho f(T_{b(\om),0}\om)}(n) \biggr\}^{1/2}     \\
&\qquad\qquad \qquad\qquad
\times\bigl\{ E^{\mu^\rho}\abs{h-\tilde h}^2\bigr\}^{1/2} \\
  &\le \sqrt{5\E[f^2]}\e.
\end{align*}
To obtain the second equality we replace $\Gamma^{\rho f(T_{a,0}\om)}(k)$ by  $\Gamma^{\rho f(T_{a,0}\om)}(k-1)\cdot  \frac{f(T_{a,0}\om)}{k}$ and similarly for $\Gamma^{\rho f(T_{b(\om),0}\om)}(l)$.

An analogous argument (but easier since we do not need the $b(\om)$)  gives 
\[  \int \ind_{\{\eta(a)>0\}} \abs{\tilde{h}(\eta^{a})-h(\eta^{a})}\,\mu^{\rho}(d\eta)
\le C\e.  \]
Since $\e>0$ was arbitrary we have $h(\eta)=h(\eta^{a})$ $\mu^\rho$-a.s. 
 
 For  given
 finite $\Lambda\subseteq\bZ^d$,  applying the mapping $\eta\mapsto\eta^{a}$ repeatedly  to remove all particles from $\Lambda$ shows that  
$h$ equals a.s.\ a function $g_\Lambda$  that does not depend on   $(\eta(x):x\in\Lambda)$. 
As $\Lambda\nearrow\Z^d$   along cubes, the limit $h_1=\lim g_\Lambda$ exists a.s.\ by martingale convergence and  is tail measurable. 
  \end{proof}

We can now conclude the proof of (temporal) ergodicity of the process $\eta_\abullet$.   Lemmas \ref{condexplm} and \ref{mupartlm}
show that $E^{\mu^\rho}[\psi\,\vert\,\cJ]$ is $\mu^\rho$-a.s.\ tail measurable, and  
hence a constant by Lemma \ref{erglm}.
\end{proof} 

\subsection{Proof of uniqueness}
In this subsection, we complete the proof of part (a) of Theorem \ref{invthm} by showing that $\mu^{\rho}$ is the \textit{unique} invariant distribution with the stated properties. We also prove the second part of Theorem \ref{invthm}.  The proof of uniqueness uses standard techniques of  interacting particle systems \cite{ligg-85}.  We will arrive at the proof of uniqueness through a sequence of lemmas.

For two configurations $\eta, \zeta$ of occupation variables, we say that $\eta \le \zeta $ if $\eta(x)\le \zeta(x)$ for all $x$. For two probability distributions $\mu, \nu$ on the configuration space, we say $\mu \le \nu $ if there exists a probability measure $\wt{\mu}$ on pairs $(\eta, \zeta)$ of configurations of occupation variables such that
$\wt\mu(\eta \le \zeta)=1$
and the marginals of $\wt\mu$  are $\mu$ and $\nu$. For a convex set $\mathcal{A}$, $\mathcal{A}_e$ will denote the set of extremal elements.

Recall that $\wt{\cS}$, resp.\ $\wt{\cS}_e$, 
denotes the set of spatially invariant resp.\ ergodic probability distributions on 
   pairs $(\eta,\zeta)$ of configurations of occupation variables. 
   Let $\tilde{\mathcal{I}}$ denote the set of probability distributions on pairs of configurations of occupation variables, that are invariant under the  temporal evolution 
   described at the beginning of Section \ref{coupsec}.     

\begin{lemma}
\label{l1}
If $\rho_1<\rho_2$ then $\mu^{\rho_1} \le \mu^{\rho_2}$.
\end{lemma}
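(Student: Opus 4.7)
The plan is to exploit the classical thinning/splitting property of Poisson random variables: a Poisson random variable of mean $\lambda_1+\lambda_2$ has the same distribution as the sum of two independent Poissons of means $\lambda_1$ and $\lambda_2$. Since the definition \eqref{def:muom} of $\mu^{\rho,\om}$ is a product of Poissons with means $\rho f(T_{0,x}\om)$, this structure is perfectly suited to building a monotone coupling site by site.

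Concretely, I would proceed as follows. For $\P$-a.e.\ $\om$, define a quenched coupling measure $\tilde\mu^\om(d\eta,d\zeta)$ by letting
\[ \eta(x) \sim \Gamma^{\rho_1 f(T_{0,x}\om)} \quad \text{and} \quad \xi(x) \sim \Gamma^{(\rho_2-\rho_1) f(T_{0,x}\om)} \]
be mutually independent across all $x\in\bZ^d$, and setting $\zeta(x)=\eta(x)+\xi(x)$. By construction $\eta\le\zeta$ pointwise, so the resulting law is supported on $\{\eta\le\zeta\}$. The marginal law of $\eta$ under $\tilde\mu^\om$ is $\mu^{\rho_1,\om}$ by definition. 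For the marginal law of $\zeta$, I would observe that independent Poissons at each site add to a Poisson with mean $\rho_1 f(T_{0,x}\om)+(\rho_2-\rho_1)f(T_{0,x}\om)=\rho_2 f(T_{0,x}\om)$, and that independence across sites is preserved, so the marginal of $\zeta$ is $\mu^{\rho_2,\om}$.

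Averaging by $\tilde\mu(d\eta,d\zeta)=\int \tilde\mu^\om(d\eta,d\zeta)\,\P(d\om)$ then produces a probability measure on pairs of configurations satisfying $\tilde\mu(\eta\le\zeta)=1$ with marginals $\mu^{\rho_1}$ and $\mu^{\rho_2}$, which is exactly what the definition of $\mu^{\rho_1}\le\mu^{\rho_2}$ requires.

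There is no real obstacle here; the lemma is a direct bookkeeping consequence of the Poisson product form of $\mu^{\rho,\om}$ together with the splitting property. The only mild care needed is measurability of $\tilde\mu^\om$ in $\om$, which follows from the measurability of $f$ (and hence of $f(T_{0,x}\om)$) and the standard construction of product measures depending measurably on a parameter.
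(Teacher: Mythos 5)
Your proof is correct and uses the same construction as the paper: a quenched coupling in which, conditionally on $\om$, $\zeta(x)=\eta(x)+\xi(x)$ with $\eta(x)$ and $\xi(x)$ independent Poissons of means $\rho_1 f(T_{0,x}\om)$ and $(\rho_2-\rho_1)f(T_{0,x}\om)$, then averaging over $\P$. No discrepancies.
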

\begin{proof}  We couple $\mu^{\rho_1,\w}$ and $\mu^{\rho_2,\w}$ by letting $\wt\mu^\w$ be the distribution of $(\eta,\zeta)$ defined by letting occupation variables  $\eta(x)$ be  independent Poisson with means $\rho_1f(T_{x,0}\om)$,  $\gamma(x)$ be independent Poisson with means $(\rho_2-\rho_1)f(T_{x,0}\om)$, and then setting  $\zeta(x)=\eta(x)+\gamma(x)$.   Then  define the coupling of $\mu^{\rho_1}$ and $\mu^{\rho_2}$ by $\wt\mu(\cdot)=\E[\wt\mu^\w(\cdot)]$.  
\end{proof}

We state the next two lemmas without proof. The proofs can be found in Lemmas 4.2 - 4.5 of \cite{andj-82}.
\begin{lemma}
\label{l2}We have 

{\rm (a)} If $\mu_1,\mu_2 \in \cI \cap \cS$, there is a $\wt{\mu}\in \wt{\cI}\cap \wt{\cS}$  with marginals $\mu_1$ and $\mu_2$.

{\rm (b)} If $\mu_1,\mu_2 \in (\cI\cap \cS)_e$, there is a $\wt{\mu}\in (\wt{\cI}\cap \wt{\cS})_e$  with marginals $\mu_1$ and $\mu_2$.
\end{lemma}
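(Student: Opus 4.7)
For part (a), the plan is a standard Cesaro-averaging coupling argument. Start with the product measure $\tilde{\mu}_0=\mu_1\otimes\mu_2$, which is automatically spatially invariant. Run the joint process under the basic coupling $\tilde{S}(t)$ from Section \ref{coupsec}: because the construction in \eqref{shift2} commutes with the spatial shifts $\shift_x$, each $\tilde{\mu}_0\tilde{S}(t)$ remains in $\tilde{\cS}$, and because $\mu_1,\mu_2\in\cI$ the two marginals of $\tilde{\mu}_0\tilde{S}(t)$ are still $\mu_1$ and $\mu_2$. Define the time averages
\[
\tilde{\mu}_n=\frac{1}{n}\sum_{t=0}^{n-1}\tilde{\mu}_0\tilde{S}(t).
\]
Tightness of $\{\tilde{\mu}_n\}$ is automatic because the marginals are fixed as $\mu_1$ and $\mu_2$, so pick any weak subsequential limit $\tilde{\mu}$. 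The standard calculation $\tilde{\mu}_n\tilde{S}(1)-\tilde{\mu}_n=\frac{1}{n}(\tilde{\mu}_0\tilde{S}(n)-\tilde{\mu}_0)\to 0$ shows $\tilde{\mu}\in\tilde{\cI}$, while spatial invariance and the marginal identifications pass to the limit by continuity of $\shift_x$ and of the projection maps.

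For part (b), the plan is a convex-analysis / Choquet argument. Let $\tilde{\cM}(\mu_1,\mu_2)$ denote the set of $\tilde{\nu}\in\tilde{\cI}\cap\tilde{\cS}$ with marginals $\mu_1$ and $\mu_2$. By part (a) this set is non-empty, and it is clearly convex. It is also weakly compact: it is contained in the set of couplings of $\mu_1$ and $\mu_2$, which is weakly compact by fixed-marginal tightness, and the defining conditions (time-invariance and shift-invariance) are closed under weak convergence. By the Krein--Milman theorem, $\tilde{\cM}(\mu_1,\mu_2)$ has an extreme point $\tilde{\mu}$. I claim $\tilde{\mu}\in(\tilde{\cI}\cap\tilde{\cS})_e$. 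Indeed, if $\tilde{\mu}=\alpha\tilde{\nu}_1+(1-\alpha)\tilde{\nu}_2$ is a convex decomposition inside $\tilde{\cI}\cap\tilde{\cS}$, then projecting onto the two coordinates gives decompositions of $\mu_1$ and $\mu_2$ in $\cI\cap\cS$; since $\mu_1$ and $\mu_2$ are extremal there, both $\tilde{\nu}_i$ lie in $\tilde{\cM}(\mu_1,\mu_2)$, and extremality of $\tilde{\mu}$ in $\tilde{\cM}(\mu_1,\mu_2)$ forces $\tilde{\nu}_1=\tilde{\nu}_2=\tilde{\mu}$.

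The main technical hurdle is the weak compactness/closedness of $\tilde{\cM}(\mu_1,\mu_2)$, and in particular verifying that weak convergence of $\tilde{\nu}_n\to\tilde{\nu}$ with $\tilde{\nu}_n\in\tilde{\cI}$ implies $\tilde{\nu}\in\tilde{\cI}$. This requires (Feller-type) continuity of the coupled semigroup $\tilde{S}(1)$ on bounded continuous test functions; for the coupled process on $(\bZ_+^{\bZ^d})^2$ this holds because a local test function is affected only by finitely many particles in a finite spatial window and the jumps are bounded by $R$, so $\tilde{S}(1)$ maps local bounded continuous functions to bounded continuous functions in the product topology. A small complication is that the occupation variables take values in the non-compact space $\bZ_+$, so the topology should be the product topology on $\bZ_+^{\bZ^d}$, and one should verify that $\tilde{\nu}\mapsto\int\psi\,d(\tilde{\nu}\tilde{S}(1))=\int\tilde{S}(1)\psi\,d\tilde{\nu}$ is weakly continuous for $\psi$ a bounded cylinder function; the uniformly bounded marginals ($\mu_1,\mu_2$) make this routine via dominated convergence. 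With these continuity and compactness facts in hand, both parts of the lemma follow from the scheme above, exactly as in Lemmas 4.2--4.5 of \cite{andj-82}.
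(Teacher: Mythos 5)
Your proof is correct, and it deserves a closer look because part (b) takes a route different from the reference the paper cites.

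Part (a) is exactly the standard Cesaro-averaging argument from Andjel's Lemma 4.2, and you handle the technical points in the right order: marginals are preserved by the coupled evolution because the basic coupling of Section \ref{coupsec} has the individual processes as marginals and $\mu_1,\mu_2\in\cI$; tightness is automatic from the fixed marginals (take $K_1\times K_2$ with $\mu_i(K_i)>1-\e/2$); the telescoping estimate then gives time-invariance of any weak limit, provided the one-step coupled operator $\tilde S(1)$ takes bounded cylinder functions to bounded cylinder functions, which holds here because jumps are bounded by $R$, so a cylinder over $\Lambda$ is mapped to a cylinder over $\Lambda^R$, and any cylinder function on $(\bZ_+\times\bZ_+)^{\Lambda^R}$ is automatically continuous since the coordinate space is discrete. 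Since on a countable product space weak convergence (under tightness) is determined by finite-dimensional distributions, checking invariance against bounded cylinder functions is enough, so the Feller issue you flag is indeed routine.

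Part (b) is where you diverge from the source the paper cites. Andjel's Lemmas 4.3--4.5 proceed via the ergodic/Choquet decomposition of $\tilde\cI\cap\tilde\cS$: write the $\tilde\mu$ from part (a) as a mixture over $(\tilde\cI\cap\tilde\cS)_e$, project the decomposition to each marginal, and invoke the elementary fact that a mixture representing an extreme point of $\cI\cap\cS$ must be concentrated at that point; this forces a.e.\ component of the decomposition to have the correct marginals, and any such component works. Your Krein--Milman argument gets the same conclusion while avoiding Choquet theory entirely: $\tilde\cM(\mu_1,\mu_2)$ is nonempty by (a), convex, and compact (tight by fixed marginals, and closed because the marginal constraints and $\tilde\cS$ are weakly closed and $\tilde\cI$ is weakly closed by the Feller property established above), so it has an extreme point; projecting any putative decomposition of that extreme point inside $\tilde\cI\cap\tilde\cS$ onto the two coordinates and using extremality of $\mu_1,\mu_2$ in $\cI\cap\cS$ shows both pieces lie back in $\tilde\cM(\mu_1,\mu_2)$, so extremality in $\tilde\cM$ upgrades to extremality in $\tilde\cI\cap\tilde\cS$. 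This is a cleaner, more self-contained argument than the decomposition route: it buys you independence from the measurability and uniqueness issues attendant to Choquet representations on non-metrizable faces, at the cost of not producing any additional structural information about $\tilde\cM(\mu_1,\mu_2)$. Both approaches rest on exactly the same compactness and Feller facts, which you have identified and verified, so the substitution is unproblematic.

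One small point worth making explicit: in the extremality upgrade you should note that the first (resp.\ second) marginal of any $\tilde\nu\in\tilde\cI\cap\tilde\cS$ is automatically in $\cI\cap\cS$; time-invariance passes to marginals because the coupled process has the original process as each marginal, and spatial invariance passes trivially. You use this implicitly when you say ``projecting \dots gives decompositions of $\mu_1$ and $\mu_2$ in $\cI\cap\cS$,'' but it is the crux of why the extremality of $\mu_1,\mu_2$ can be applied, so it merits a sentence.
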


\begin{lemma}
\label{l2.5}
If $ \wt{\mu} \in (\wt{\cI} \cap \wt{\cS})_e \mbox{ and } \wt{\mu} \lbrace (\eta,\zeta): \eta \ge \zeta \mbox{ or } \zeta \ge \eta \rbrace =1$ then
\[ \wt{\mu} \lbrace (\eta,\zeta): \eta\ge \zeta\rbrace =1 \mbox{ or } \wt{\mu} \lbrace (\eta,\zeta): \zeta\ge \eta\rbrace =1.\]
\end{lemma}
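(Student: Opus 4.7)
The plan is to exhibit an explicit convex decomposition of $\tilde{\mu}$ into conditional measures supported on sets that are jointly invariant under spatial shifts and under the coupled dynamics of Section \ref{coupsec}, and then to collapse the decomposition by extremality. Partition $\{\eta\ge\zeta\}\cup\{\zeta\ge\eta\}$ into the three disjoint pieces $A=\{\eta\ge\zeta,\ \eta\ne\zeta\}$, $B=\{\zeta\ge\eta,\ \zeta\ne\eta\}$, and $C=\{\eta=\zeta\}$. By hypothesis these exhaust $\tilde{\mu}$-mass one.

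First I would verify the double invariance of $A$, $B$, $C$. Spatial invariance under the shifts $\shift_x$ is immediate since each defining condition is pointwise in the coordinates. For dynamical invariance I appeal to the coupling description: on $C$ the two processes may be run with identical jump variables and thus stay equal forever; on $A$ only $+$ discrepancies are initially present, and since the basic coupling never creates discrepancies while annihilation requires a $-$ partner, no $-$ discrepancies can ever appear and the $+$ discrepancies persist, so $\eta_t\ge\zeta_t$ with $\eta_t\ne\zeta_t$ at every $t$; the case of $B$ is symmetric.

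Next, write $p_A=\tilde{\mu}(A)$, $p_B=\tilde{\mu}(B)$, $p_C=\tilde{\mu}(C)$ and let $\tilde{\mu}_A, \tilde{\mu}_B, \tilde{\mu}_C$ be the corresponding normalized conditional measures, so that $\tilde{\mu}=p_A\tilde{\mu}_A+p_B\tilde{\mu}_B+p_C\tilde{\mu}_C$. The double invariance transfers to each piece: for instance $\tilde{\mu}_A S(t)$ is supported in $A$, so matching the identity $\tilde{\mu}=\tilde{\mu}S(t)$ against the decomposition and using disjointness of supports forces $\tilde{\mu}_A S(t)=\tilde{\mu}_A$; the same argument with $\shift_x$ in place of $S(t)$ gives $\tilde{\mu}_A\in\tilde{\cS}$. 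Thus each nonzero piece lies in $\tilde{\cI}\cap\tilde{\cS}$ and the three pieces are mutually singular. Extremality of $\tilde{\mu}$ in $(\tilde{\cI}\cap\tilde{\cS})_e$ then forces exactly one of $p_A, p_B, p_C$ to equal $1$: in the cases $p_A=1$ or $p_C=1$ we conclude $\tilde{\mu}\{\eta\ge\zeta\}=1$, while $p_B=1$ yields $\tilde{\mu}\{\zeta\ge\eta\}=1$.

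The main obstacle is the dynamical-invariance verification for $A$ and $B$; it hinges entirely on the ``no creation of discrepancies'' feature of the coupling of Section \ref{coupsec}, and once that bookkeeping is settled the remainder is a routine extremal-measure collapse.
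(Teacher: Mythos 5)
The paper does not actually prove this lemma: it is one of the two statements for which the authors write ``We state the next two lemmas without proof.\ The proofs can be found in Lemmas 4.2--4.5 of \cite{andj-82}.'' Your proof is a self-contained and correct argument, and it rests on the right mechanism: the basic coupling of Section~\ref{coupsec} never creates discrepancies, only annihilates them, so a configuration with only $+$ (resp.\ only $-$) discrepancies retains that property for all time, and the count of discrepancies of the present sign is conserved since there is nothing to annihilate with. This makes each of $A$, $B$, $C$ (as well as the bigger sets $\{\eta\ge\zeta\}$ and $\{\zeta\ge\eta\}$) invariant under the joint Markov transition and under spatial shifts. Your matching argument correctly transfers the invariances to the normalized restrictions $\tilde\mu_A,\tilde\mu_B,\tilde\mu_C$ (when $p_A,p_B,p_C>0$), and extremality in $\tilde\cI\cap\tilde\cS$ then forces all the mass onto one piece, giving the conclusion.

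Two small remarks. First, the three-way disjoint partition is a bit more than needed: one could simply observe that $D=\{\eta\ge\zeta\}$ and $E=\{\zeta\ge\eta\}$ are each jointly invariant, so by the $0$--$1$ law that characterizes extremal points of $\tilde\cI\cap\tilde\cS$ one gets $\tilde\mu(D),\tilde\mu(E)\in\{0,1\}$, and $\tilde\mu(D\cup E)=1$ forces at least one to equal $1$. Your version proves this $0$--$1$ law from scratch via the explicit decomposition, which is fine and perhaps more transparent. Second, the phrasing ``extremality of $\tilde\mu$ in $(\tilde\cI\cap\tilde\cS)_e$'' should be ``extremality of $\tilde\mu$ in $\tilde\cI\cap\tilde\cS$'' (i.e.\ $\tilde\mu\in(\tilde\cI\cap\tilde\cS)_e$); likewise note that the paper's statement writes $(\cI\cap\cS)_e$ where $(\tilde\cI\cap\tilde\cS)_e$ is evidently intended, consistent with your reading.
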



A crucial lemma needed in the proof of uniqueness is the following.
\begin{lemma}
\label{l4}
 Let $\wt{\mu}\in \wt{\cS}_e \mbox{ such that } \int [\eta(0)+\zeta(0)] d\wt{\mu} < \infty$. Fix $x \ne y  \in \bZ^d$. Then 
\[ \lim_{t \to \infty} \wt{\mu}_t \big\lbrace (\eta,\zeta): \eta(x) >\zeta(x) \mbox{ and } \eta(y)< \zeta(y)\big \rbrace=0\]
\end{lemma}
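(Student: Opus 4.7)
The plan is to use the coupling of Section~\ref{coupsec} started from $\tilde\mu$ and to show that opposite-sign discrepancies at different sites must annihilate on average within bounded time. By spatial shift-invariance of $\tilde\mu$ and of the i.i.d.\ jump collection $\Xi$, the joint state $(\eta_t,\zeta_t)$ lies in $\tilde\cS$ for every $t$, so after translating I may assume $x=0$ and set $v:=y-x$ and
\[ q_t:=\tilde\mu_t\{\beta^+(0)>0,\beta^-(v)>0\}. \]

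The first ingredient is a finite annihilation budget. By Lemma~\ref{betadecrease} the per-site expectation $E_{\tilde\mu_t}[\beta^-(0)]$ is nonincreasing in $t$, and starts finite since $E_{\tilde\mu}[\beta^-_0(0)]\le E_{\tilde\mu}[\zeta(0)]<\infty$. Because $-$ particles disappear only through annihilation and the mean $-$ flux through site $0$ is preserved by motion (using that $\wlev_t$ is independent of the past under the product assumption~\eqref{Pass}, together with the shift invariance of $E_{\tilde\mu_t}[\beta^-(\cdot)]$), the difference
\[ A_t := E_{\tilde\mu_t}[\beta^-(0)]-E_{\tilde\mu_{t+1}}[\beta^-(0)] \]
equals the expected number of $-$ annihilations per site between times $t$ and $t+1$. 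Telescoping gives $\sum_{t=0}^\infty A_t\le E_{\tilde\mu}[\zeta(0)]<\infty$, and in particular $A_t\to 0$.

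The second step bounds $A_t$ from below in terms of $q_t$. Assumption~\eqref{Rass} makes the averaged walk $\bar q$ of~\eqref{qbarxy}, and hence also $q$ of~\eqref{qxy} (which agrees with $\bar q$ off the origin), irreducible and aperiodic on $\bZ^d$. Consequently there exist $T=T(v)\in\bN$ and $c_0=c_0(v)>0$ such that
\[ \bE\,P^\om\{X^0_s = \widetilde X^v_s \text{ for some } 1\le s\le T\}\ge c_0 \]
for independent $\om$-walks $X^0,\widetilde X^v$ started at $0$ and $v$. On the event $\{\beta^+_t(u)>0,\beta^-_t(u+v)>0\}$ I select one labelled $+$ particle at $u$ and one labelled $-$ particle at $u+v$; under the coupling they perform independent $\om$-walks until annihilation, and with probability at least $c_0$ they meet within $T$ steps, forcing the chosen $+$ at $u$ to be annihilated by some $-$ during this window. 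Summing over labelled pairs $u$ in a large box and normalizing by volume, the per-unit-volume expected number of annihilation events that consume a labelled $+$ in the window $[t+1,t+T]$ is at least $c_0 q_t$. Since each annihilation consumes exactly one $+$ particle, this per-unit-volume quantity is bounded above by $\sum_{s=0}^{T-1}A_{t+s}$, yielding
\[ \sum_{s=0}^{T-1} A_{t+s} \ge c_0\, q_t. \]
Combined with the finite-budget conclusion $A_t\to 0$, this forces $q_t\to 0$.

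The main obstacle is making the attribution in the second step fully rigorous. The labelled $+$ at $u$ may actually be annihilated by a $-$ \emph{other} than the tracked one at $u+v$, and it may do so before the tracked $-$ arrives at the potential meeting site. The fix is to lower bound only the probability that the labelled $+$ gets annihilated within $T$ steps---interactions with additional discrepancies can only accelerate this annihilation---and to note that every such annihilation still contributes to some $A_s$ with $s\in[t,t+T-1]$. The needed labelled-particle tracking is classical in the interacting particle systems literature; compare \cite{andj-82} and \cite{ligg-85}, where analogous arguments for zero-range and exclusion systems are carried out in detail.
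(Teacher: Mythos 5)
Your overall strategy is the same one the paper uses: couple two copies started from $\tilde\mu$, observe via Lemma~\ref{betadecrease} that the discrepancy density is nonincreasing, then show that whenever opposite-sign discrepancies coexist with probability bounded below, the meeting probability of two independent $\om$-walks forces a uniformly positive amount of annihilation per unit volume per $T$ steps, which cannot persist forever. The paper packages this as a contradiction (driving $E_{\tilde\mu}[\phi_t(0)]$ negative along a subsequence), while you organize it as a telescoping sum $\sum_t A_t<\infty$ forcing $A_t\to0$ and then $q_t\to 0$; these are equivalent.

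However, there is a real gap in the way you resolve the attribution problem. You claim that the probability that the \emph{labelled $+$ at $u$} is annihilated within $T$ steps is at least $c_0$, justified by the assertion that ``interactions with additional discrepancies can only accelerate this annihilation.'' That monotonicity is false. The danger is not (only) that the labelled $+$ could be killed by a different $-$; it is that a different $+$ may kill the tracked $-$ partner at $u+v$ \emph{before} the two tracked walks meet, after which the labelled $+$ has no guaranteed annihilation event at all. Adding discrepancies of the \emph{same} sign as the tracked $+$ can therefore decrease, not increase, the probability that the tracked $+$ dies; the system is not monotone with respect to adding $\pm$ pairs. What is actually true — and what the paper uses, since its event $A(t,y)$ reads ``a $+$ \emph{or} a $-$ particle present in the cube $\dots$ has been annihilated'' — is only that at least one of the two tracked discrepancies dies by time $t+T$ with probability $\ge c_0$: if the virtual walks meet and both are still alive, the matching rule forces at least one of them to vanish, and otherwise one of them already died. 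This weaker statement is still enough: each annihilation kills exactly one $+$ and one $-$, the tracked $+$'s are distinct across sites and so are the tracked $-$'s, so each annihilation event can account for at most two of your ``$E_u$ occurred'' indicators, and one regains $\sum_{s=0}^{T-1}A_{t+s}\ge (c_0/2)\,q_t$ (or one avoids the factor of $2$ entirely by tiling space into disjoint boxes as the paper does). So the conclusion stands, but the specific monotonicity you invoked must be replaced by the ``$+$ or $-$ dies'' version.
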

\begin{proof}
Our proof employs some of the notation developed in Section \ref{coupsec}. Fix a positive integer $m$. Let $I=[-m,m]^d$ and let $B$ be the event that $I$ contains both $(+)$ and $(-)$ particles. The theorem will be proved if we can show that $\wt{\mu}_t(B) \to 0$ as $t \to \infty$. So let us assume to the contrary that we can find a sequence $t_k \uparrow \infty $ such that 
\begin{equation}
\label{opp}
 \wt{\mu}_{t_k}(B)\ge \delta >0.
\end{equation}

By our assumptions on the environment, we can find a positive integer $T=T(m)$ and a positive real number $\rho=\rho(m)>0$ such that 
\[\min_{x,\,y\, \in\, I} P\big\{ X^{x}_{\abullet} \mbox{ and }\tilde{X}^{y}_{\abullet} \mbox{ meet by time } T \bigr\}  \ge \rho. \]  
Let  $A(t,y)$ denote the event that a $(+)$ or a $(-)$ particle present in the cube $y+I$ at time $t$ has been annihilated by time $t+T$.
It is clear that 
\begin{equation} \label{lb} P\big[A(t,y)\big{\vert} \eta_t,\zeta_t  \big] \ge \rho \cdot \one_{B}\big{\{}\theta_y(\eta_t,\zeta_t)\big{\}} \mbox{ a.s. }\end{equation}
For what follows, assume that all $t_{k+1}-t_k \ge T$. Let $\phi_t(x)=\beta_t^{+}(x) +\beta_t^{-}(x)$ be the number of discrepancy particles at $x$ at time $t$. Let $n=l(2m+1)+m$ for a positive integer $l$ and divide the cube $[-n,n]^d$ into 
$(2l+1)^d$ cubes of side length $2m+1$. We have 
\begin{align*} \frac{1}{(2n+1)^d} \sum_{y\in [-n,n]^d} \phi_{t_k+T}(y) &\le \frac{1}{(2n+1)^d} \sum_{y\in [-n-RT,n+RT]^d} \phi_{t_k}(y)\\
& \qquad-\, \frac{1}{(2n+1)^d} \sum_{j=1}^{(2l+1)^d}\one_B\{\theta_{u(j)}(\eta_{t_k},\zeta_{t_k})\}\cdot \one_{A(t_k,u(j))}
\end{align*}
where $u(j)$ is the center of cube $j$. Taking expectations and letting $n \to \infty$, we get 
\[ E_{\wt{\mu}}[\phi_{t_k+T}(0)] \le  E_{\wt{\mu}}[\phi_{t_k}(0)]  -\liminf_{n \to \infty} \frac{-1}{(2n+1)^d} \sum_{j=1}^{(2l+1)^d} E_{\wt{\mu}}\big[ \one_B\{\theta_{u(j)}(\eta_{t_k},\zeta_{t_k})\}\cdot \one_{A(t_k,u(j))}\big] \]
It follows from \eqref{lb} and \eqref{opp} that 
\[ E_{\wt{\mu}}\big[\one_B\{\theta_{u(j)}(\eta_{t_k},\zeta_{t_k})\}\cdot \one_{A(t_k,u(j))}\big] \ge \rho \wt{\mu}_{t_k} (B) \ge \rho \delta.\]
We thus have 
\[E_{\wt{\mu}}[\phi_{t_{k+1}}(0)]\le E_{\wt{\mu}}[\phi_{t_k+T}(0)] \le  E_{\wt{\mu}}[\phi_{t_k}(0)]  - \frac{\rho \delta}{(2m+1)^d}. \]
We can conclude from Lemma \ref{betadecrease} that $E_{\wt\mu}[\phi_{t_k}(0)]\to -\infty$. But this is a contradiction since $E_{\wt{\mu}}[\phi_t(0)]\ge 0$. 
The proof of the lemma is complete.
\end{proof}

\begin{lemma}
\label{l5}
If $\mu_1,\mu_2 \in (\cI\cap \cS)_e$ and $E_{\mu_i} \eta(0)  < \infty$ for $i=1,2$, then $\mu_1 \le \mu_2$ or $\mu_2 \le \mu_1$.
\end{lemma}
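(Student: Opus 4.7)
The plan is to combine Lemma \ref{l2}(b) with Lemmas \ref{l4} and \ref{l2.5} to obtain an ordered coupling of $\mu_1$ and $\mu_2$. First, apply Lemma \ref{l2}(b) to produce a measure $\tilde\mu\in(\tilde\cI\cap\tilde\cS)_e$ whose marginals are $\mu_1$ and $\mu_2$. Because the marginals have finite first moment, $\int[\eta(0)+\zeta(0)]\,d\tilde\mu=E_{\mu_1}\eta(0)+E_{\mu_2}\eta(0)<\infty$, so $\tilde\mu$ satisfies the hypotheses of Lemma \ref{l4}.

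Next, exploit invariance. For any $x\ne y\in\bZ^d$ and any $t\in\bZ_+$, the stationarity of $\tilde\mu$ under the coupled dynamics gives
\[
\tilde\mu\{(\eta,\zeta):\eta(x)>\zeta(x),\ \eta(y)<\zeta(y)\}
=\tilde\mu_t\{(\eta,\zeta):\eta(x)>\zeta(x),\ \eta(y)<\zeta(y)\}.
\]
By Lemma \ref{l4} the right-hand side tends to $0$ as $t\to\infty$, hence the left-hand side equals $0$. Taking the countable union over all pairs $x\ne y$ shows that, under $\tilde\mu$, almost surely there do \emph{not} exist $x\ne y$ with $\eta(x)>\zeta(x)$ and $\eta(y)<\zeta(y)$. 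Since $\eta(x)>\zeta(x)$ and $\eta(x)<\zeta(x)$ cannot occur at the same site, this is exactly the statement
\[
\tilde\mu\{(\eta,\zeta):\eta\ge\zeta\text{ or }\zeta\ge\eta\}=1.
\]

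Finally, apply Lemma \ref{l2.5} to $\tilde\mu\in(\tilde\cI\cap\tilde\cS)_e$: either $\tilde\mu\{\eta\ge\zeta\}=1$ or $\tilde\mu\{\zeta\ge\eta\}=1$. Since $\tilde\mu$ has marginals $\mu_1,\mu_2$, this yields $\mu_2\le\mu_1$ in the first case and $\mu_1\le\mu_2$ in the second, completing the proof.

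\textbf{Main obstacle.} All the hard analytic work has been absorbed into the earlier lemmas, so this proof is essentially bookkeeping. The one subtle point is making sure that the invariance used in the second step is the correct joint invariance of $\tilde\mu$ under the coupled dynamics (and not just the marginal invariance of $\mu_1,\mu_2$); this is precisely why Lemma \ref{l2}(b) is invoked in its ergodic form, producing $\tilde\mu\in(\tilde\cI\cap\tilde\cS)_e$ rather than merely a spatially ergodic joining.
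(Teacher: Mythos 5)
Your outline matches the paper's strategy (Lemma~\ref{l2}(b) $\to$ Lemma~\ref{l4} $\to$ Lemma~\ref{l2.5}), but there is a genuine gap in the middle step. Lemma~\ref{l4} requires its input measure to lie in $\tilde\cS_e$, i.e.\ to be \emph{spatially ergodic}. The measure $\tilde\mu$ produced by Lemma~\ref{l2}(b) lies in $(\tilde\cI\cap\tilde\cS)_e$, the extreme points of the set of jointly time-invariant and spatially invariant couplings. Extremality in $\tilde\cI\cap\tilde\cS$ does \emph{not} imply extremality in the larger set $\tilde\cS$, so $\tilde\mu$ need not be spatially ergodic, and you cannot invoke Lemma~\ref{l4} on $\tilde\mu$ directly as you do when you write ``By Lemma~\ref{l4} the right-hand side tends to $0$.''

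The paper closes this gap with the spatial ergodic decomposition
\[
\tilde\mu\big\{\eta(x)>\zeta(x),\ \eta(y)<\zeta(y)\big\}
=\int_{\tilde\cS_e}\tilde\nu\big\{\eta(x)>\zeta(x),\ \eta(y)<\zeta(y)\big\}\,\Gamma(d\tilde\nu),
\]
applies the time-evolution operator $S(t)$ to both sides, uses time-invariance of $\tilde\mu$ on the left, applies Lemma~\ref{l4} to each (spatially ergodic, finite-mean for $\Gamma$-a.e.) component $\tilde\nu$ on the right, and passes to the limit $t\to\infty$ via dominated convergence. Once you insert this decomposition step, the rest of your argument (countable union over pairs, Lemma~\ref{l2.5}) is correct and agrees with the paper.
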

\begin{proof}
From Lemma \ref{l2}, we can find $\wt{\mu} \in (\wt{\cI}\cap \wt{\cS})_e$ with marginals $\mu_1$ and $\mu_2$. Using the ergodic decomposition of stationary measures \cite[Theorem 6.6]{vara-book1},
\begin{align*} 
& \wt{\mu}\big\{(\eta,\zeta):\eta(x)>\zeta(x) \mbox{ and } \eta(y)<\zeta(y)\big\}\\
&\qquad 
= \int_{\wt\cS_e} \tilde{\nu}\big\{(\eta,\zeta):\eta(x)>\zeta(x) \mbox{ and } \eta(y)<\zeta(y)\big\} \Psi(d\tilde\nu), \end{align*} 
for a probability measure $\Psi$ on $\wt\cS_e$. On applying the operator $S(t)$ to both sides of the above equation, we observe that the right hand side goes to $0$. We thus get 
\[\wt{\mu}\lbrace (\eta,\zeta): \eta\le \zeta \mbox{ or } \zeta\le \eta \rbrace =1\]
An application of Lemma \ref{l2.5} completes the proof.
\end{proof}

\begin{proposition}\label{mu=murho}
If $ \mu \in (\cI\cap \cS)_e$ and $\rho_0=E_{\mu} \eta(0) < \infty$ then $\mu=\mu^{\rho_0}$.
\end{proposition}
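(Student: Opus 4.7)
The plan is to leverage Lemma \ref{l5} together with the already-established fact that $\mu^{\rho_0}$ belongs to $(\cI\cap\cS)_e$ with mean density $\rho_0$, and then use a stochastic-domination-plus-equal-means argument to force equality of the marginals.

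First, I would observe that Theorem \ref{invthm}(a), whose first part has already been proved above (everything except uniqueness), guarantees that $\mu^{\rho_0} \in (\cI\cap\cS)_e$ and $E_{\mu^{\rho_0}}[\eta(0)] = \rho_0 < \infty$. Thus both $\mu$ and $\mu^{\rho_0}$ lie in $(\cI\cap\cS)_e$ with finite and identical mean occupation. Applying Lemma \ref{l5} to this pair, I obtain either $\mu \le \mu^{\rho_0}$ or $\mu^{\rho_0} \le \mu$. The two cases are symmetric, so assume without loss of generality that $\mu \le \mu^{\rho_0}$.

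By the definition of stochastic domination, there is a probability measure $\tilde\mu$ on pairs of configurations with marginals $\mu$ (as $\eta$) and $\mu^{\rho_0}$ (as $\zeta$) such that $\tilde\mu\{\eta \le \zeta\} = 1$. Crucially, inspecting the construction of this coupling (as in Lemma \ref{l1} and Lemma \ref{l2}(a)) shows that $\tilde\mu$ can be chosen in $\tilde\cS$, i.e.\ spatially invariant; this is what I will use next. From the domination,
\[
\zeta(0) - \eta(0) \ge 0 \qquad \tilde\mu\text{-a.s.},
\]
while integrating gives
\[
E_{\tilde\mu}\big[\zeta(0) - \eta(0)\big] = \rho_0 - \rho_0 = 0.
\]
A nonnegative random variable with zero expectation vanishes a.s., hence $\eta(0) = \zeta(0)$ $\tilde\mu$-a.s. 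Spatial invariance of $\tilde\mu$ then yields $\eta(x) = \zeta(x)$ $\tilde\mu$-a.s.\ for every $x \in \bZ^d$, so the two marginals coincide and $\mu = \mu^{\rho_0}$.

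The argument is essentially a direct assembly of the preceding lemmas, so no single step is genuinely hard; the only point that requires care is ensuring that the coupling extracted via Lemma \ref{l5} is spatially invariant, which is needed to promote the pointwise equality at the origin to all of $\bZ^d$. This is built into Lemma \ref{l2}(a), so the proof goes through without further work.
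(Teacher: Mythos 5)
Your proof is correct, and it takes a cleaner route than the paper. The paper does not directly invoke Lemma \ref{l5} on the pair $(\mu,\mu^{\rho_0})$; instead it combines Lemmas \ref{l1} and \ref{l5} to show that there is a threshold density $\rho_0$ with $\mu\ge\mu^{\rho}$ for $\rho<\rho_0$ and $\mu\le\mu^{\rho}$ for $\rho>\rho_0$, and then sandwiches the finite-dimensional distribution functions $\mu^{\rho_1}(\eta(x_i)\ge k_i)\le\mu(\eta(x_i)\ge k_i)\le\mu^{\rho_2}(\eta(x_i)\ge k_i)$ and sends $\rho_1\uparrow\rho_0$, $\rho_2\downarrow\rho_0$, using the continuity of $\rho\mapsto\mu^\rho$ that comes for free from the explicit Poisson form. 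Your argument bypasses the sandwich and the limit: Lemma \ref{l5} gives domination between $\mu$ and $\mu^{\rho_0}$ directly, and then the ``nonnegative random variable with mean zero vanishes a.s.'' step yields equality. The two approaches are both valid; yours is shorter, the paper's is arguably a bit more robust because it never asks Lemma \ref{l5} to compare two measures with exactly the same density.

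One remark on a point you flag as the delicate step. You do not actually need spatial invariance of the coupling $\tilde\mu$. For every fixed $x$, the marginals of $\tilde\mu$ are shift-invariant, so $E_{\tilde\mu}[\zeta(x)-\eta(x)]=\rho_0-\rho_0=0$, and since $\zeta(x)-\eta(x)\ge 0$ a.s.\ you get $\eta(x)=\zeta(x)$ a.s.\ directly at each site; a countable intersection finishes the argument without ever touching the shift-invariance of the joint law. Also, the spatially invariant coupling in this setting comes from Lemma \ref{l2}(b) (as used in the proof of Lemma \ref{l5}), not from Lemma \ref{l1}/Lemma \ref{l2}(a) as your parenthetical suggests, but since you do not really need that invariance the misattribution is harmless.
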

\begin{proof}
Since $\mu^{\rho} \in \cS_e\cap \cI,$ it follows that $\mu^{\rho} \in (\cI\cap \cS)_e.$ We can then conclude from Lemma \ref{l1} and Lemma \ref{l5} that there exists a $\rho_0'\in[0,\infty]$ such that $\mu \le \mu^{\rho}$ for $\rho > \rho_0'$ and $\mu \ge  \mu^{\rho}$ for $\rho < \rho_0'$. In particular, we have $\rho_0=E_\mu\eta(0)\le\rho$ for $\rho>\rho_0'$ and similarly $\rho_0\ge\rho$ for $\rho<\rho_0'$. This says that $\rho_0'=\rho_0$.

Now fix $\rho_1<\rho_0<\rho_2$. For all $(x_1,x_2,\cdots,x_n) \in (\bZ^d)^n$ and all $(k_1,k_2,\cdots,k_n)\in (\bZ_{+})^n$, we have
\[ \mu^{\rho_1} ( \eta(x_i)\ge k_i, 1\le i \le n )\le  \mu ( \eta(x_i)\ge k_i, 1\le i \le n )\le  \mu^{\rho_2} ( \eta(x_i)\ge k_i, 1\le i \le n ) \]
The first inequality (resp. the second inequality) above can be seen by looking at the coupled measure $\wt{\mu}$ corresponding to $\mu^{\rho_1}$ (resp. $\mu^{\rho_2}$) and $\mu$ so that $\wt{\mu}(\eta\le \zeta)=1$ (resp. $\wt{\mu}(\eta\ge \zeta)=1$). Now let $\rho_1 \uparrow \rho_0$ and $\rho_2\downarrow \rho_0$ to see that $\mu $ has the same finite dimensional distributions as $\mu^{\rho_0}$.
\end{proof}


\begin{proof}[Proof  of the remaining parts of Theorem \ref{invthm}]  We first prove that $\mu^{\rho}$ is the unique measure with the stated properties in part (a) of Theorem \ref{invthm}. Indeed, let $\mu$ be another measure with those properties. Since $\mu \in \mathcal{S}_e \cap \mathcal{I}$, we can conclude that $\mu \in (\mathcal{I} \cap \mathcal{S})_e$. From Proposition \ref{mu=murho}, we must have that $\mu=\mu^{\rho}$.

We now turn to part (b) of the theorem. Let $\nu$ be a probability measure on $\bZ_+^{\bZ^d}$ that 
is stationary and ergodic under spatial translations 
and has mean occupation $\int \zeta(0)\,d\nu=\rho$.  
Denote   the
occupation  process with initial distribution $\nu$  by $\zeta_t$.   
Utilizing the ergodic decomposition theorem 
\cite[Theorem 6.6]{vara-book1}, find 
$\wt\mu\in\wt\cS_e$ with marginals  $\mu^\rho$ and $\nu$. 
Let $\wt\mu_t$ be the time $t$ distribution of the
joint process $(\eta_t, \zeta_t)$ coupled as described  in 
Section \ref{coupsec}. 

Initial shift invariance implies that mean occupations
are constant $\rho$ throughout time and space:
\begin{align*}
E_{\wt{\mu}_t}[\zeta(x)]=E_\nu[ \zeta_t(x)] &= \int \bE\Bigl\{ \sum_{y} \zeta(y) 
\pi_{0,t}^{\omega}(y,x) \Bigr\} \, \nu(d\zeta) 
= \sum_{y} \mathbb{E}\big(\pi_{0,t}^{\omega}(y,x)\big) 
\int \zeta(y) d \nu =\rho.
\end{align*}
Chebyshev's inequality and Tychonov's theorem (Thm.~37.3 in \cite{munk}) can be used
to show that the sequence $\{\wt\mu_t\}_{t\in\bZ_+}$ is
tight.  

  Let $\tilde\nu$ be any limit point
as $t\to\infty$.  Then by Proposition \ref{rho1gerho2}
$ \tilde{\nu}{\lbrace}(\eta,\zeta): \eta=\zeta {\rbrace} =1.$ 
This proves that $P_\nu\{\zeta_t\in\cdot\}\Rightarrow\mu^\rho$.
This completes the proof of Theorem \ref{invthm}.
\end{proof}


\section{Covariances of the invariant measures}  \label{covarsec} 

 Define the Green's functions for both $q$ and $\qhom$ walks by 
\[
G_N(x,y)=\sum_{k=0}^{N} q^k(x,y)
\quad\text{and}\quad   \Ghom_N(x,y)=\sum_{k=0}^{N} \qhom^k(x,y).  \]
Recall
the potential kernel for the $\qhom$ walk
 \be 
 \ahom(x)=\lim_{N\to\infty}\bigl( \Ghom_N(0,0)-\Ghom_N(x,0)\bigr). \label{abarY}\ee
 In the transient case $d\ge 3$  the limit above exists trivially, 
 since  
 \[ 
G(x,y)=\sum_{k=0}^{\infty} q^k(x,y)<\infty 
\quad\text{and}\quad   \Ghom(x,y)=\sum_{k=0}^{\infty} \qhom^k(x,y)<\infty.  \] 
So for $d\ge 3$ 
\be   \ahom(x)= \Ghom(0,0)-\Ghom(x,0).  \label{abarYtrans}\ee
  For the existence of the limit \eqref{abarY}   in the recurrent case  $d\in\{1,2\}$
  see T1 on p.~352 of \cite{spitzer}.  
In all cases the kernel $\ahom(x)$  satisfies these equations:
 \begin{align}\label{a:eqn}  
 \sum_z \qhom(0,z)\ahom(z)=1 \quad\text{and}\quad  
  \sum_z \qhom(x,z)\ahom(z)=\ahom(x) \ \text{for $x\ne 0$.}
  \end{align}

 The  constant $\beta$ defined   by \eqref{defbeta}   has the alternate 
 representation 
\be \beta=\sum_z q(0,z)\ahom(z).  \label{beta2}\ee
We omit the argument for the equality of the two representations
of $\beta$.  It    is a simple version of the one given at the end of this section for
\eqref{covfm2}.

To prove Theorem \ref{covthm}  we first verify this proposition and then 
derive the Fourier representation \eqref{covfm2}.  

\begin{proposition}  Let $d\ge 1$. 
For $m\in\bZ^d\setminus\{0\}$ 
\begin{align}
  \Cvv[f(\om), f(T_{m,0}\om)] &= \beta^{-1} \sum_z q(0,z)[\ahom(-m)-\ahom(z-m)] 
 \label{covfm} 
\end{align}
 and 
 \be  \Vvv[f(\om)]=\beta^{-1}-1. \label{varf}\ee
\label{covprop}\end{proposition}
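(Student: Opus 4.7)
The strategy is to derive a discrete Poisson equation on $\bZ^d$ for the two-point function $u(m) := \E[f(\om) f(T_{0,m}\om)]$ from the backward recursion for $f$, and then to invert it using the potential kernel $\bar a$. Because Lemma \ref{lm:fNmoment2} provides $f_N \to f$ in $L^2(\P)$, the approximations $u_N(m) := \E[f_N(\om) f_N(T_{0,m}\om)]$ converge pointwise to $u(m)$, so it suffices to establish a recursion for $u_N$ and pass to the limit.

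Expanding $f_N(\om) = \sum_x \om_{-1,x}(-x) f_{N-1}(T_{-1,x}\om)$ and the shifted version $f_N(T_{0,m}\om) = \sum_y \om_{-1,y}(m-y) f_{N-1}(T_{-1,y}\om)$, then using the independence of the layer $\om_{-1,\cdot}$ from the past environment, splitting into the cases $x=y$ and $x\ne y$, and recognizing $\sum_x p(-x) p(m-z-x) = \bar q(0, m-z)$ and $\sum_x \E[\om_{0,0}(-x) \om_{0,0}(m-x)] = q(0, m)$, I obtain the recursion
\[
u_N(m) = \sum_z \bar q(0, m-z)\, u_{N-1}(z) + \kappa(m)\, u_{N-1}(0), \qquad \kappa(m) := q(0,m) - \bar q(0,m).
\]
The perturbation $\kappa$ is finitely supported with $\sum_m \kappa(m) = 0$. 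Letting $N \to \infty$ and setting $C(m) := u(m) - 1 = \Cvv[f(\om), f(T_{0,m}\om)]$ yields the discrete Poisson equation $(I - \bar q) C = u(0)\,\kappa$ on $\bZ^d$.

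The potential-kernel identities displayed before \eqref{beta2} amount to $(I - \bar q) \bar a = -\delta_0$, so the general bounded solution has the form $C(m) = -u(0)(\bar a \ast \kappa)(m) + c$ with $c$ a constant, since the bounded $\bar q$-harmonic functions on $\bZ^d$ are constants. To pin down $c=0$ I will show $C(m) \to 0$ as $|m| \to \infty$: since each $f_N$ depends only on the environments in $[-N,-1] \times \{|x| \le RN\}$, for $|m| > 2RN$ the variables $f_N$ and $f_N \circ T_{0,m}$ are $\bP$-independent, and a Cauchy--Schwarz estimate using $f_N \to f$ in $L^2$ forces $u(m) \to 1$. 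The matching decay $(\bar a \ast \kappa)(m) \to 0$ is immediate from boundedness of $\bar a$ in the transient case $d \ge 3$; for $d \in \{1,2\}$ it uses $\sum_y \kappa(y) = 0$ together with $\sum_y y\,\kappa(y) = 0$, the latter coming from the symmetry $q(0,y)=q(0,-y)$ and the analogous property of $\bar q$, combined with a Taylor-type expansion of $\bar a$.

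It remains to evaluate at $m=0$. Using symmetry of $\bar a$ along with the alternate representation $\sum_z q(0,z) \bar a(z) = \beta$ from \eqref{beta2} and $\sum_z \bar q(0,z) \bar a(z) = 1$, I get $(\bar a \ast \kappa)(0) = \beta - 1$, so $C(0) = u(0)(1-\beta)$. Combined with $C(0) = u(0) - 1 = \Vvv[f]$ this forces $u(0) = \beta^{-1}$, which is \eqref{varf}. Substituting back gives $C(m) = -\beta^{-1}(\bar a \ast \kappa)(m)$; expanding the convolution, using $\sum_z \bar q(0,z) \bar a(m-z) = \bar a(m)$ for $m \ne 0$ and then inserting $\sum_z q(0,z) = 1$ together with $\bar a(m) = \bar a(-m)$, rewrites the answer as $\beta^{-1}\sum_z q(0,z)[\bar a(-m) - \bar a(z-m)]$, which is \eqref{covfm}. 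The principal technical obstacle is the recurrent regime $d \in \{1,2\}$, where $\bar a$ is unbounded; verifying both the boundedness of $C$ and the decay of $\bar a \ast \kappa$ at infinity genuinely requires the mean-zero cancellations in $\kappa$, and this is where the argument diverges from the straightforward transient calculation.
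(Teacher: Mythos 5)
Your proposal is correct, and it proceeds by a genuinely different route from the paper. The paper peels off the environment level at time $-N$ (the far end of the transition kernel), which yields the telescoping identity $C_N(m)-C_{N-1}(m)=\sum_y \hfun(y)\,q^{N-1}(y,m)$ and hence the Green-function representation $C_N(m)=\sum_y\hfun(y)G_{N-1}(y,m)$; it then passes to the limit via Abel summation and Spitzer's excursion/ratio limit theorems in the recurrent case $d\in\{1,2\}$, and by direct Green-function algebra in the transient case $d\ge 3$. You instead peel off the level at time $-1$, which produces the fixed-point recursion $u_N(m)=\sum_z\bar q(0,m-z)u_{N-1}(z)+\kappa(m)u_{N-1}(0)$ and, after $N\to\infty$, the Poisson equation $(I-\bar q)C=u(0)\kappa$ directly for the limit; you then invert it using $(I-\bar q)\bar a=-\delta_0$, the Choquet--Deny/Blackwell Liouville property for $\bar q$, and the decay $C(m)\to 0$ established from the cone-dependence of $f_N$ plus $L^2$-approximation. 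I checked the recursion (the independence of the level-$(-1)$ environment from $f_{N-1}\circ T_{-1,\cdot}$, the $x=y$ versus $x\ne y$ split, and the identities $\sum_x p(-x)p(m-x-z)=\bar q(0,m-z)$ and $\sum_x\E[\om_{0,0}(-x)\om_{0,0}(m-x)]=q(0,m)$), and the $m=0$ evaluation giving $u(0)=\beta^{-1}$ via $(\bar a*\kappa)(0)=\beta-1$. What your approach buys is a unified treatment of recurrent and transient dimensions that avoids Abel summation entirely. What it costs is reliance on two classical inputs the paper does not need: the Liouville theorem for bounded $\bar q$-harmonic functions, and potential-kernel asymptotics sufficient to give $(\bar a*\kappa)(m)\to 0$ — which in $d=1$ genuinely requires both $\sum_y\kappa(y)=0$ and $\sum_y y\,\kappa(y)=0$, as you correctly flag; a full write-up should cite these (both are in Spitzer, Chapter VII) and spell out the Taylor-type step.
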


 A few more notations.   Recall that $Y_n$ denotes the Markov chain with transition $q$
 and $\Yhom_n$ the $\qhom$ random walk.  
   Successive returns to the origin are 
marked as follows:  
\be \text{$\tau_0=0$ and  for $j>0$,   
 $\tau_j=\inf\{n>\tau_{j-1}: Y_n=0\}$.} \label{tau1}\ee
 Abbreviate $\tau=\tau_1$.   The corresponding stopping time
 for $\Yhom_n$ is $\tauhom$.  
For $m\in\bZ^d$ and $N\ge 1$ abbreviate
\[  C_N(m)=  \Cvv[f_N(\om), f_N(T_{m,0}\om)]
=\sum_{z,w\in\bZ^d}\Cvv\bigl[   \pi_{-N,0}(z,0),\,
  \pi_{-N,0}(w,m) \bigr]. \]
Define also the function 
\[
\hfun(y)=\sum_{z\in\bZ^d} \Cvv[\pi_{0,1}(0,y+z),\,\pi_{0,1}(0,z)]=q(0,y)-\qhom(0,y), \quad y\in\bZ^d. \]
Symmetry $h(-y)=h(y)$ holds. 

\begin{lemma}  In all dimensions $d\ge 1$, 
\be C_N(m)=\sum_{y\in\bZ^d} \hfun(y) G_{N-1}(y,m).  \label{covar1a}\ee
\end{lemma}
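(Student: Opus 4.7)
The plan is to establish the one-step recursion
\[ C_N(m) = C_{N-1}(m) + \sum_{y\in\bZ^d} \hfun(y)\, q^{N-1}(y,m) \]
and then iterate from the base case $C_0 \equiv 0$ (which holds because $f_0 \equiv 1$), producing the claimed identity $C_N(m) = \sum_y \hfun(y)\, G_{N-1}(y,m)$.

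To set up the recursion I would split off the oldest environment level $\om_{-N}$. The one-step identity $\pi^\om_{-N,0}(z,0) = \sum_x \om_{-N,z}(x-z)\,\pi^\om_{-N+1,0}(x,0)$ yields the decomposition
\[ f_N(\om) = \sum_x \xi(x)\,\pi^\om_{-N+1,0}(x,0), \qquad \xi(x) := \sum_u \om_{-N,\,x-u}(u). \]
An analogous manipulation of $f_N(T_{0,m}\om)$, tracking how $T_{0,m}$ acts on both $\xi$ and the transition kernel, followed by reindexing, gives $f_N(T_{0,m}\om) = \sum_y \xi(y)\,\pi^\om_{-N+1,0}(y,m)$. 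The key structural input is that $\xi(\cdot)$ depends only on the level $\om_{-N}$, whereas $\pi^\om_{-N+1,0}(\cdot,\cdot)$ depends only on levels $\om_{-N+1},\dotsc,\om_{-1}$, so these two families are $\bP$-independent.

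A direct calculation, using independence of $\om_{-N,z}$ across distinct sites $z$ together with the definition of $\hfun$ in terms of the per-site covariance within a single environment vector, gives $\bE[\xi(x)\xi(y)] = 1 + \hfun(y-x)$. Factoring by independence then produces
\[ \bE[f_N(\om)\,f_N(T_{0,m}\om)] = \sum_{x,y}\bigl(1+\hfun(y-x)\bigr)\,\bE[\pi_{-N+1,0}(x,0)\,\pi_{-N+1,0}(y,m)]. \]
The $1$-piece collapses to $\bE[f_{N-1}(\om)\,f_{N-1}(T_{0,m}\om)] = 1 + C_{N-1}(m)$ by summing the $\pi$'s. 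For the $\hfun$-piece I would substitute $u = y - x$, apply spatial stationarity of $\bP$ to shift $x$ to $0$, and then identify the inner sum $\sum_x \bE[\pi_{-N+1,0}(x,0)\,\pi_{-N+1,0}(x+u,m)]$ as the averaged probability that the difference $Y = \tilde X - X$ of two quenched-independent walks travels from $u$ at time $0$ to $m$ at time $N-1$; by the very definition \eqref{qxy} of $q$, this equals $q^{N-1}(u,m)$. Subtracting $\bE[f_N]\bE[f_N\circ T_{0,m}] = 1$ then delivers the recursion.

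The step I expect to require the most care is the two-walk identification $\sum_x \bE[\pi(x,0)\,\pi(x+u,m)] = q^{N-1}(u,m)$. This combines the spatial translation invariance of $\bP$ (to move the base point $x$ to $0$ after a change of variables) with the observation that the difference of two walks in a common environment is, under the averaged law, a Markov chain on $\bZ^d$ whose transition probability is precisely $q(\cdot,\cdot)$. The rest of the argument is routine bookkeeping with the two sums.
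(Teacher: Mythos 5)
Your proof is correct and follows essentially the same route as the paper's: both arguments split off the oldest environment level $\wlev_{-N}$, exploit the resulting independence between that level and the remaining $N-1$ levels, and identify the inner sum over starting sites as the $(N-1)$-step transition of the difference walk with kernel $q$. The only presentational differences are that you compute the moment $\E[f_N(\om)\,f_N(T_{0,m}\om)]$ directly via the auxiliary function $\xi$ rather than applying the triple covariance decomposition the paper uses, and you anchor the recursion at the cleaner base case $C_0\equiv 0$ (from $f_0\equiv 1$) instead of the paper's $N=1$.
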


\begin{proof} The case $N=1$ follows from a shift of space and time. 
To do induction on $N$   use the Markov property and the 
additivity of covariance.  Abbreviate temporarily $\kappa_{x,y}= \pi_{-N,-N+1}(x,y)$ and 
recall that the mean kernel is $p_{y-x}=\bE\kappa_{x,y}$.
\begin{align}
C_N&(m)=\sum_{z,z_1,w,w_1}\Cvv\bigl[ \kappa_{z,z_1} \pi_{-N+1,0}(z_1,0)\,,\,
\kappa_{w,w_1} \pi_{-N+1,0}(w_1,m) \bigr]\nn\\
&=\sum_{z,z_1,w,w_1} \Bigl\{    \Cvv\bigl[ (\kappa_{z,z_1}-p_{z_1-z}) \pi_{-N+1,0}(z_1,0)\,,\,
(\kappa_{w,w_1}-p_{w_1-w}) \pi_{-N+1,0}(w_1,m) \bigr] \label{covline1}\\[3pt]
&\quad\qquad + \Cvv\bigl[ p_{z_1-z}   \pi_{-N+1,0}(z_1,0)\,,\,
(\kappa_{w,w_1}-p_{w_1-w}) \pi_{-N+1,0}(w_1,m) \bigr] \label{covline2}\\[4pt]
&\qquad \qquad\quad + \Cvv\bigl[  p_{z_1-z} \pi_{-N+1,0}(z_1,0)\,,\,
 p_{w_1-w} \pi_{-N+1,0}(w_1,m) \bigr]\,\Bigr\}. \label{covline3}
\end{align}
Working from the bottom up, the terms on line \eqref{covline3} add up to 
$C_{N-1}(m)$.  The terms on  line \eqref{covline2} vanish because $\kappa_{w,w_1}-p_{w_1-w}$
is mean zero and independent of the other random variables inside the covariance. 
On line \eqref{covline1} the covariance vanishes unless $z=w$. Thus by 
rearranging line \eqref{covline1} we get
\begin{align*}
&C_N(m)-C_{N-1}(m)= {\rm line \,\eqref{covline1}} \\ 
&=\sum_{z,z_1, w_1}   \Cvv  (\kappa_{z,z_1}, \kappa_{z,w_1} ) 
\bE\bigl[ \pi_{-N+1,0}(z_1,0)  \pi_{-N+1,0}(w_1,m) \bigr]  \\
&=\sum_{y,x}  \Cvv  (\kappa_{0,x}, \kappa_{0,x+y} ) 
\sum_\ell \bE\bigl[ \pi_{-N+1,0}(y,m+\ell)  \pi_{-N+1,0}(0,\ell) \bigr]\\
 &=\sum_y \hfun(y) q^{N-1}(y,m).  
\qedhere\end{align*}
\end{proof}

In the recurrent case we will use Abel summation, hence the next lemma.  

 \begin{lemma} Let $d\in\{1,2\}$. 
For $x,m\in\bZ^d$,  the limit 
\be a(x,m)=\lim_{s\nearrow 1}\sum_{k=0}^\infty s^k\bigl( q^k(0,m)- q^k(x,m)\bigr)  \label{axmAbel}\ee
exists. For $m=0$ the limit is 
\be a(x,0)=\frac{\ahom(x)}{\beta} \label{axm0}\ee
and  for $m\ne 0$ 
\be a(x,m)=\frac{\ahom(x)}{\beta}\sum_{z}q(0,z)\bigl[\ahom(-m)-\ahom(z-m)\bigr]
-\ahom(-m)+\ahom(x-m).  \label{axm}\ee
\end{lemma}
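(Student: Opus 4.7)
The plan is to exploit that $q$ and $\bar q$ agree off the row $x=0$: setting $\hfun(z)=q(0,z)-\bar q(0,z)$, which is finitely supported by the bounded-jump assumption and satisfies $\sum_z\hfun(z)=0$, the discrepancy kernel factors as $\Delta(x,y):=q(x,y)-\bar q(x,y)=\delta_{x,0}\hfun(y)$. For $|s|<1$ introduce the absolutely convergent resolvents
\[
G_s(x,m)=\sum_{k\ge 0}s^k q^k(x,m),\qquad \bar G_s(x,m)=\sum_{k\ge 0}s^k \bar q^k(x,m),
\]
and apply the resolvent identity $G_s=\bar G_s+s\,\bar G_s\,\Delta\,G_s$. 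The product structure of $\Delta$ collapses the fixed-point equation to a scalar one for $\sum_z\hfun(z)G_s(z,m)$, which solves explicitly to give
\[
G_s(x,m)=\bar G_s(x,m)+s\,\bar G_s(x,0)\,\phi_s(m),\qquad \phi_s(m)=\frac{\sum_z \hfun(z)\bar G_s(z,m)}{1-s\sum_z \hfun(z)\bar G_s(z,0)}.
\]

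Subtracting the $x=0$ and general-$x$ versions yields
\[
G_s(0,m)-G_s(x,m)=\bigl[\bar G_s(0,m)-\bar G_s(x,m)\bigr]+s\bigl[\bar G_s(0,0)-\bar G_s(x,0)\bigr]\phi_s(m),
\]
and the plan is to take $s\nearrow 1$ in each factor. By the classical Abelian theorem for the symmetric aperiodic recurrent walk $\bar Y$ (Spitzer \cite{spitzer}), $\bar G_s(0,0)-\bar G_s(x,0)\to \bar a(x)$, and, after invoking translation invariance of $\bar q$ and the symmetry $\bar a(-y)=\bar a(y)$, $\bar G_s(0,m)-\bar G_s(x,m)\to \bar a(x-m)-\bar a(-m)$. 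For the $\phi_s$-factor, the zero-sum $\sum_z\hfun(z)=0$ permits the rewriting
\[
\sum_z \hfun(z)\bar G_s(z,y)=-\sum_z \hfun(z)\bigl[\bar G_s(0,y)-\bar G_s(z,y)\bigr],
\]
reducing each divergent-looking sum to a finite linear combination of potential-kernel-type differences. Using the characterizing identity $\sum_z \bar q(0,z)\bar a(z)=1$ and the alternate representation $\sum_z q(0,z)\bar a(z)=\beta$ from \eqref{beta2}, the Abel limit of the denominator of $\phi_s(m)$ is $\beta$, while its numerator tends to $1-\beta$ when $m=0$ and, using the $\bar q$-harmonicity $\sum_z\bar q(0,z)\bar a(z-m)=\bar a(-m)$ (which is valid because $-m\ne 0$), to $\sum_z q(0,z)[\bar a(-m)-\bar a(z-m)]$ when $m\ne 0$. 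Substituting these limits into the displayed identity gives \eqref{axm0} (after simplifying $\bar a(x)-\bar a(0)+\bar a(x)(1-\beta)/\beta=\bar a(x)/\beta$) and \eqref{axm}.

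The main technical point is the termwise passage to the Abel limit in the perturbation identity, since in the recurrent case $d\in\{1,2\}$ the individual resolvents $\bar G_s(x,m)$ blow up as $s\nearrow 1$. This is controlled entirely by the finite support of $\hfun$ together with the zero-sum condition, which render every sum $\sum_z\hfun(z)\bar G_s(z,\cdot)$ a finite linear combination of Green-function differences whose Abel limits exist by the standard potential-kernel theory; all other terms in the identity are already known to converge.
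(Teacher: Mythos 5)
Your proof is correct but takes a genuinely different route from the paper. The paper decomposes $\sum_k s^k q^k(x,m)$ probabilistically into excursions of the $q$-chain away from the origin, introducing $U(x,m,s)=E_x[\sum_{k<\tau}s^k\ind\{Y_k=m\}]$ and expressing the Abel sum as $U(x,m,s)+\frac{E_x(s^\tau)}{1-E_0(s^\tau)}U(0,m,s)$; it then invokes Spitzer's identities P29.4 (to evaluate $U(x,m)=\bar a(x)+\bar a(-m)-\bar a(x-m)$ via $\bar g_{\{0\}}$) and P32.2 (to evaluate the ratio $\lim_{s\nearrow 1}\frac{1-E_x(s^\tau)}{1-E_0(s^\tau)}=\bar a(x)/\beta$), exploiting at each step that the $q$-chain follows the $\bar q$-walk away from $0$. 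You instead treat $q$ as a rank-one perturbation of $\bar q$ at the operator level: the resolvent identity $G_s=\bar G_s+s\,\bar G_s\Delta G_s$ with $\Delta(x,y)=\delta_{x,0}h(y)$ collapses to a scalar fixed-point equation, and the structural facts you need — finite support of $h$, $\sum h=0$, symmetry and harmonicity of $\bar a$, and the alternate representation $\beta=\sum q(0,z)\bar a(z)$ — are exactly those needed to push the Abel limits through. One point worth making explicit (you rely on it implicitly when dividing by the denominator of $\phi_s$): the scalar equation $\Phi_s(0)\,\psi(s)=1-\psi(s)$ with $\psi(s)=1-s\sum_z h(z)\bar G_s(z,0)$ immediately forces $\psi(s)\neq 0$ for all $s\in(0,1)$, since $\psi(s)=0$ would give the contradiction $0=1$; so the formula for $\phi_s$ is always valid. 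Your route is more algebraic and compact; the paper's excursion decomposition is more probabilistically transparent and parallels the first-return calculation it runs in the transient case $d\ge 3$, which is likely why the authors chose it. Both ultimately reduce to the same potential-kernel Abel limits from Spitzer.
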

 
 \begin{proof} Let $s$ vary in $(0,1)$ and let  
\[
U(x,m,s)=E_x\Bigl[ \,\sum_{k=0}^{\tau-1} s^k\ind\{Y_k=m\}\Bigr]
\underset{s\nearrow 1}{\longrightarrow}
E_x\Bigl[ \,\sum_{k=0}^{\tau-1}  \ind\{Y_k=m\}\Bigr]=U(x,m). 
 \]
Decompose the  summation across intervals $[\tau_j,\tau_{j+1})$ and use the Markov 
property: 
\begin{align*}
\sum_{k=0}^\infty s^k q^k(x,m)&= 
E_x\Bigl[ \,\sum_{k=0}^{\tau_1-1} s^k\ind\{Y_k=m\}\Bigr]
+ \sum_{j=1}^\infty E_x\Bigl[ s^{\tau_j}\sum_{k=\tau_j}^{\tau_{j+1}-1} s^{k-\tau_j}\ind\{Y_k=m\}\Bigr]\\
&= U(x,m,s)+ \sum_{j=1}^\infty E_x(s^{\tau}) E_0(s^{\tau})^{j-1} U(0,m,s)\\
&=U(x,m,s)+ \frac{ E_x(s^{\tau})  }{ 1-E_0(s^{\tau})}U(0,m,s). 
\end{align*} 
From this,
\be
\sum_{k=0}^\infty s^k\bigl( q^k(0,m)- q^k(x,m)\bigr)
=\frac{ 1-E_x(s^{\tau})  }{ 1-E_0(s^{\tau})}U(0,m,s) -U(x,m,s).
\label{covaraux7}\ee 

We analyze the quantities on the right in \eqref{covaraux7}.  

Suppose first  $x\ne 0$.  
Then $U(x,m)$ is the same for the Markov chain $Y_k$ as for the random walk 
$\Yhom_k$ because these processes agree until the first visit to $0$. 
In the notation of Spitzer \cite{spitzer}, with a check added to refer to the random walk
$\Yhom_k$,   $\ghom_{\{0\}}(x,m)=U(x,m)$.  By {P29.4} on p.~355 of \cite{spitzer}
and {D11.1} on p.~115, 
for recurrent random walk 
\[
U(x,m)=\ghom_{\{0\}}(x,m)
=\ahom(x)+\ahom(-m)-\ahom(x-m). \] 

For $x=0$ we have $U(0,0)=1$, and for $m\ne 0$,
\begin{align*}
U(0,m)&=\sum_{y\ne 0} q(0,y) U(y,m)
=\sum_{y\ne 0} q(0,y)\bigl[ \ahom(y)+\ahom(-m)-\ahom(y-m) \bigr]\\
&=\beta + \sum_{y} q(0,y)\bigl[\ahom(-m)-\ahom(y-m) \bigr]. 
\end{align*}

For the asymptotics of the fraction on the right in \eqref{covaraux7} 
we can assume again $x\ne 0$ for otherwise the value is 1. It will be convenient to look at the reciprocal.  
A computation gives 
\begin{align*}
&\frac{ 1-E_0(s^{\tau})  }{ 1-E_x(s^{\tau})} = 
\frac{\sum_{k=0}^\infty s^k P_0(\tau>k)}{\sum_{k=0}^\infty s^k P_x(\tau>k)}\\
&\quad = \frac1{\sum_{k=0}^\infty s^k P_x(\tau>k)} \;+\; 
s\sum_{z\ne 0}q(0,z)\frac{\sum_{k=0}^\infty s^k P_z(\tau>k)}{\sum_{k=0}^\infty s^k P_x(\tau>k)}. 
\end{align*}
Again we can take advantage of known random walk limits because both $x,z\ne 0$ so
the probabilities are the same as those for $\Yhom_k$.  By {P32.2} on p.~379 of \cite{spitzer},
as $s\nearrow 1$, for recurrent random walk the above converges to (note that $E_x(\tau)=\infty$)
\[ \sum_{z\ne 0}q(0,z)\frac{\ahom(z)}{\ahom(x)}=\frac\beta{\ahom(x)}. \]

Letting $s\nearrow 1$ in \eqref{covaraux7} gives   \eqref{axm0} and  \eqref{axm}.
 \end{proof}
 
For $m=0$ we can obtain the convergence as in \eqref{abarY} without the Abel summation.  
But we do not need this   for further development. 

\begin{proof}[Proof of Proposition \ref{covprop}]
Since $f_N\to f$ in $L^2(\bP)$, the covariance in \eqref{covfm} is given by the 
 limit of $C_N(m)$, so by   \eqref{covar1a}
\begin{align*}
 &\Cvv[f(\om), f(T_{m,0}\om)] = 
 \lim_{N\to\infty} \Bigl\{  \sum_y  q(0,y) G_{N-1}(y,m) - \sum_y  \qhom(0,y) G_{N-1}(y,m)\Bigr\}.
\end{align*}
Next, 
\begin{align*} 
 \sum_y  q(0,y) G_{N-1}(y,m)=G_N(0,m)-\delta_{0,m} 
 =q^N(0,m)-\delta_{0,m}+G_{N-1}(0,m).
\end{align*}
Since the  Markov chain $q$ follows the random walk $\qhom$ away from $0$
it is  null recurrent for $d=1,2$  and transient for $d\ge 3$. 
So $q^N(0,m)\to 0$ 
\cite[Theorem 1.8.5]{norr}.  Thus the limiting covariance now has the form
\be   -\delta_{0,m} +  \lim_{N\to\infty}  \sum_y  \qhom(0,y) [G_{N-1}(0,m)-G_{N-1}(y,m)]. 
\label{C3}\ee
At this point the treatment separates into recurrent and transient cases. This is because the Green's function is uniformly bounded only in the transient case.

\medskip

{\bf Case 1.}  $d\in\{1,2\}$

\medskip

Convergence in \eqref{C3}  implies Abel convergence (Theorem 12.41 in \cite{whee-zygm}
or Theorem 1.33 in Chapter III of \cite{zygm}), so the limiting covariance equals 
\[
 -\delta_{0,m} + \lim_{s\nearrow 1}    \sum_y  \qhom(0,y)  \sum_{k=0}^\infty s^k\bigl( q^k(0,m)- q^k(y,m)\bigr). \]
 By substituting in \eqref{axm0} and \eqref{axm} we obtain \eqref{varf} and \eqref{covfm}.

\medskip

{\bf Case 2.}  $d\ge 3$

\medskip

In the transient case we can pass directly to the limit in \eqref{C3} and obtain  
 \be
 \Cvv[f(\om), f(T_{m,0}\om)]   = -\delta_{0,m} +   \sum_y  \qhom(0,y) [G(0,m)-G(y,m)]. 
\label{C4}\ee
The sum above can be restricted to $y\ne 0$. By restarting after the first return 
 to $0$,   
\be    G(y,m)=  E_y\Bigl[\;\sum_{k=0}^{\tau-1} \ind\{Y_k=m\}\Bigr]
+P_y(\tau<\infty) G(0,m).  \label{C5}\ee
Next,
\be \begin{aligned}
G(0,m)&= \sum_{j=0}^\infty E_0\Bigl[  \ind\{\tau_j<\infty\}
\sum_{k=\tau_j}^{\tau_{j+1}-1}  \ind\{Y_k=m\}\Bigr]\\
&= \sum_{j=0}^\infty P_0  (\tau<\infty)^j   E_0\Bigl[ \,\sum_{k=0}^{\tau -1}  \ind\{Y_k=m\}\Bigr]\\
&=\frac1{P_0  (\tau=\infty)}\Bigl(  \delta_{0,m}   + (1-\delta_{0,m}) \sum_{z\ne 0} q(0,z)
E_z\Bigl[ \,\sum_{k=0}^{\tau -1}  \ind\{Y_k=m\}\Bigr] \, \Bigr). 
\end{aligned}\label{C6}\ee
Now consider first $m\ne 0$.  Combining the above,
\begin{align*}
 &\Cvv[f(\om), f(T_{m,0}\om)]   =    \sum_{y\ne 0}  \qhom(0,y) \bigl\{ G(0,m)-G(y,m)\bigr\}\\
 &\quad= \sum_{y\ne 0}  \qhom(0,y) \Bigl\{  \; \frac{P_y(\tau=\infty)}{P_0(\tau=\infty)}
 \sum_{z\ne 0} q(0,z)  E_z\Bigl[ \,\sum_{k=0}^{\tau -1}  \ind\{Y_k=m\}\Bigr]
- E_y\Bigl[\;\sum_{k=0}^{\tau-1} \ind\{Y_k=m\}\Bigr]\; \Bigr\} 
\intertext{using equality of $q$ and $\qhom$ away from $0$}
 &\quad= \sum_{y\ne 0}  \qhom(0,y) \Bigl\{  \; \frac{P_y(\bar\tau=\infty)}{P_0(\tau=\infty)}
 \sum_{z\ne 0} q(0,z)  E_z\Bigl[ \,\sum_{k=0}^{\bar\tau -1}  \ind\{\Yhom_k=m\}\Bigr]
- E_y\Bigl[\;\sum_{k=0}^{\bar\tau-1} \ind\{\Yhom_k=m\}\Bigr]\; \Bigr\} \\
 &\quad=   \frac{P_0(\bar\tau=\infty)}{P_0(\tau=\infty)}
 \sum_{z\ne 0} q(0,z)  E_z\Bigl[ \,\sum_{k=0}^{\bar\tau -1}  \ind\{\Yhom_k=m\}\Bigr]
- \sum_{y\ne 0}  \qhom(0,y) E_y\Bigl[\;\sum_{k=0}^{\bar\tau-1} \ind\{\Yhom_k=m\}\Bigr] 
\intertext{applying \eqref{C5} and \eqref{C6} to the $\qhom$ walk}
 &\quad=  \frac{P_0(\bar\tau=\infty)}{P_0(\tau=\infty)}
 \sum_{z\ne 0} q(0,z) \Bigl\{ \Ghom(z,m) - P_z(\bar\tau<\infty) \Ghom(0,m) \Bigr\} 
-  P_0(\bar\tau=\infty) \Ghom(0,m)\\
 &\quad=  \frac{P_0(\bar\tau=\infty)}{P_0(\tau=\infty)}
 \sum_{z\ne 0} q(0,z) \Bigl\{ \Ghom(z,m) - P_z(\bar\tau<\infty) \Ghom(0,m) \Bigr\} \\
&\qquad\qquad \qquad\qquad\qquad\qquad -     \frac{P_0(\bar\tau=\infty)}{P_0(\tau=\infty)}  
 \sum_{z\ne 0} q(0,z)  P_z(\tau=\infty)   \Ghom(0,m)\\
 &\quad=  \frac{P_0(\bar\tau=\infty)}{P_0(\tau=\infty)}
 \sum_{z\ne 0} q(0,z) \bigl[ \Ghom(z,m) -   \Ghom(0,m)  \bigr].  
\end{align*}
To finish this case, note that
\begin{align*}
 \beta&=\sum_z q(0,z)\ahom(z) =\sum_{z\ne 0} q(0,z) (\Ghom(0,0)-\Ghom(z,0)) 
 =\sum_{z\ne 0} q(0,z)  \frac{P_z(\bar\tau=\infty)}{P_0(\bar\tau=\infty)}\\
 &= \frac{P_0(\tau=\infty)}{P_0(\bar\tau=\infty)}.
\end{align*}
We have arrived at 
\[ \Cvv[f(\om), f(T_{m,0}\om)]  = \beta^{-1}
 \sum_{z\ne 0} q(0,z) \bigl[  \ahom(-m)  -\ahom (z-m)\bigr].  \]   
 
Return to \eqref{C4}--\eqref{C6} to cover the case $m=0$:
\begin{align*}
&\Cvv[f(\om), f(\om)]   =    \sum_y  \qhom(0,y) [G(0,0)-G(y,0)] -1
=  \sum_{y\ne 0}  \qhom(0,y)  \frac{P_y(\tau=\infty)}{P_0(\tau=\infty)} -1\\
&\quad =  \frac{P_0(\bar\tau=\infty)}{P_0(\tau=\infty)} -1=\beta^{-1}-1.  
\end{align*} 

This completes the proof of Proposition \ref{covprop}.   
\end{proof}

 \begin{proof}[Completion of the proof of Theorem \ref{covthm}] 
It remains to prove the Fourier representation \eqref{covfm2} from  \eqref{covfm}.  
In several stages symmetry of $\ahom$ and the transitions  is used.
\begin{align*}
 &\Cvv[f(\om), f(T_{m,0}\om)]   = \beta^{-1} \sum_z q(0,z)[\ahom(m)-\ahom(m-z)] \\
&\quad= \lim_{N\to\infty} \beta^{-1} \sum_{k=0}^N \sum_z q(0,z)[\qhom^k(m-z,0)-\qhom^k(m,0)] \\
&\quad= \lim_{N\to\infty} \frac{\beta^{-1}}{(2\pi)^d} \sum_{k=0}^N \sum_z q(0,z)
\int_{\bT^d}[e^{-i\theta\cdot(m-z)}- e^{-i\theta\cdot m}  ]\chfnhom^k(\theta)\,d\theta \\
&\quad= \lim_{N\to\infty} \frac{-\beta^{-1}}{(2\pi)^d}  \int_{\bT^d}   \cos(\theta\cdot m)
 \frac{1-\chfn(\theta)}{1-\chfnhom(\theta)}  (1-\chfnhom^{N+1}(\theta))\,d\theta \\
&\quad=  \frac{-\beta^{-1}}{(2\pi)^d}  \int_{\bT^d}   \cos(\theta\cdot m)
 \frac{1-\chfn(\theta)}{1-\chfnhom(\theta)}  \,d\theta. 
 \end{align*}
 The last equality comes from  $0\le \chfnhom(\theta)<1$ 
 for $\theta\in\bT^d\setminus\{0\}$ and  dominated convergence. The ratio 
 $(1-\chfn(\theta))/{(1-\chfnhom(\theta))} $   stays bounded as $\theta\to 0$ because  both transitions $q$ and $\qhom$ have zero mean and $\qhom$ has a nonsingular covariance matrix    \cite[{P7} p.~74]{spitzer}.  
\end{proof}




\section{Convergence of centered current fluctuations}\label{sfdd}

We prove Theorem \ref{fddthm} by proving the following proposition. 
Recall the definition of the current $Y_n(t,r)$  from \eqref{Y}, and let 
$\{Z(t,r): (t,r)\in\bR_+\times\bR\}$
be the mean zero Gaussian process  defined by \eqref{Zint} or equivalently through the
covariance \eqref{Zcov}.  
Recall also the definitions 
 \[ \overline{Y}_n(t,r)=n^{-1/4}\bigl\{ Y_n(t,r)-E^\om[Y_n(t,r)]\bigr\},  \]
 \[ \overline Y_n(\thvec)= \sum_{i=1}^N \theta_i \overline Y_n(t_i,r_i)
\quad\text{and}\quad  Z(\thvec)= \sum_{i=1}^N \theta_i  Z(t_i,r_i). \]

\begin{proposition}
 \be   E^{\om} \bigl[ \exp \bigl\{ i \overline Y_n(\thvec) \bigr\}\bigr]
 \to \mE \bigl[\exp\bigl\{i Z(\thvec)\bigr\} \bigr]
\mbox{ in $\bP$-probability.}\label{fddlim}\ee
\label{fddprop}\end{proposition}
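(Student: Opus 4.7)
I would decompose the centered current into two pieces that are conditionally independent given $\om$ and $\eta_0$, apply a conditional Lindeberg CLT to each, and identify the limiting quenched variances with the covariance structure \eqref{Zcov} of $Z$. Introduce the signed indicator
\[ \psi^{x,j}_n(t,r)=\begin{cases}\ind\{X^{x,j}_{nt}\le\fl{nvt}+r\sqn\}, & x>0,\\ -\ind\{X^{x,j}_{nt}>\fl{nvt}+r\sqn\}, & x\le 0,\end{cases} \]
and set $g_n(x,t,r)=E^\om[\psi^{x,j}_n(t,r)]$, $\rho^\om(x)=E^\om[\eta_0(x)]$. Then
\[ Y_n(t,r)-E^\om Y_n(t,r)=A_n(t,r)+B_n(t,r), \]
\[ A_n(t,r)=\sum_x\sum_{j=1}^{\eta_0(x)}\bigl(\psi^{x,j}_n(t,r)-g_n(x,t,r)\bigr),\qquad B_n(t,r)=\sum_x\bigl(\eta_0(x)-\rho^\om(x)\bigr)g_n(x,t,r). \]
Under $P^\om$ the walks are independent of $\eta_0$ and mutually independent, so given $(\om,\eta_0)$ the combination $\overline A_n(\thvec)=n^{-1/4}\sum_i\theta_i A_n(t_i,r_i)$ is a sum of independent centered $O(n^{-1/4})$-summands, while $\overline B_n(\thvec)$ is a deterministic function of $\eta_0$. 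Conditional independence makes the characteristic function factor as
\[ E^\om[e^{i\overline Y_n(\thvec)}]=E^\om\bigl[e^{i\overline B_n(\thvec)}\cdot E^\om[e^{i\overline A_n(\thvec)}\,\vert\,\eta_0]\bigr]. \]

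\textbf{Conditional Lindeberg CLT.} Because per-walk summand sizes are $O(n^{-1/4})$ while the $(\om,\eta_0)$-conditional variance of $\overline A_n(\thvec)$ is $O(1)$, Lindeberg's theorem yields, uniformly in $\eta_0$,
\[ E^\om[e^{i\overline A_n(\thvec)}\,\vert\,\eta_0]-\exp\bigl(-\tfrac12 V^A_n(\thvec)\bigr)\to 0,\qquad V^A_n(\thvec)=n^{-1/2}\sum_x\eta_0(x)\,\Var^\om\Bigl(\sum_i\theta_i\psi^{x,j}_n(t_i,r_i)\Bigr). \]
A straightforward $\eta_0$-variance bound using \eqref{momass} shows $V^A_n(\thvec)$ concentrates in $P^\om$-probability around its $\eta_0$-mean $n^{-1/2}\sum_x\rho^\om(x)\Var^\om(\cdots)$. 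An analogous Lindeberg argument for $\overline B_n(\thvec)$, a sum of independent centered summands in $x$ whose size is controlled by \eqref{momass}, gives
\[ E^\om[e^{i\overline B_n(\thvec)}]-\exp\bigl(-\tfrac12 V^B_n(\thvec)\bigr)\to 0,\qquad V^B_n(\thvec)=n^{-1/2}\sum_x\Var^\om(\eta_0(x))\Bigl(\sum_i\theta_i g_n(x,t_i,r_i)\Bigr)^2. \]
Thus the proposition will follow, with an $L^1(\bP)$ upgrade via boundedness and dominated convergence, once it is shown that
\[ V^A_n(\thvec)+V^B_n(\thvec)\longrightarrow \sum_{k,\ell}\theta_k\theta_\ell\bigl(\rho_0\Gamma_1((t_k,r_k),(t_\ell,r_\ell))+\qvar^2\Gamma_2((t_k,r_k),(t_\ell,r_\ell))\bigr) \]
in $\bP$-probability, the right side being $\Var Z(\thvec)$.

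\textbf{Identification of the limits and the main obstacle.} The individual limits are extracted via the quenched CLT for a single walk: for $\bP$-a.e.~$\om$ and $\hat x\in\bR$ fixed,
\[ g_n(\fl{\hat x\sqn},t,r)\longrightarrow \Phi_{\sigma^2 t}(r-\hat x)-\ind\{\hat x\le 0\}, \]
and the two-time joint quenched probabilities $E^\om[\psi^{x,j}_n(t_i,r_i)\psi^{x,j}_n(t_\ell,r_\ell)]$ converge to the corresponding two-time Gaussian joint probabilities by the Markov property applied at time $n(t_i\wedge t_\ell)$. Summing with the prefactor $n^{-1/2}$ converts the sums into Riemann integrals, and combining this with the ergodic theorem for the stationary sequences $\rho^\om(\cdot)$ and $\Var^\om(\eta_0(\cdot))$ (means $\rho_0$ and $\qvar^2$), plus the explicit form of $\Psi_{\nu^2}$, produces $\rho_0\Gamma_1$ and $\qvar^2\Gamma_2$. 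The main technical obstacle is the $\bP$-concentration of $V^A_n$ and $V^B_n$: these are sums of $O(\sqn)$ terms whose $\om$-fluctuations do not vanish termwise, so one must bound annealed second moments of the form $\E[\pi^\om_{0,n}(x,y)^2]$, which reduce to Green's-function bounds for the two-walk chain with transition $q$. Precisely this estimate---Lemma \ref{lm:GGbar}, which already drove the $L^2$-boundedness of $f_N$ in Section \ref{inv}---is the decisive tool, and concentrating the quenched variances around their deterministic limits is where the real work lies.
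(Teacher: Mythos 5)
Your decomposition of $\overline Y_n(\thvec)$ into a walk-fluctuation piece $A_n$ and an occupation-fluctuation piece $B_n$, combined via conditioning on $\eta_0$, is a legitimate alternative to what the paper actually does. The paper groups by starting site $m$ into the variables $\bar U_m$ of \eqref{umbar}, which are genuinely independent under $P^\om$ (product initial measure plus independent walks), so a single Lindeberg--Feller argument suffices; the split of the limiting variance into $\rho_0\Gamma_1 + \qvar^2\Gamma_2$ then emerges from the identity \eqref{varsum} for $E^\om[U_m^2]$. Your route achieves the same split by isolating $A_n$ and $B_n$, at the cost of a conditional CLT and a concentration step for the $\eta_0$-dependent conditional variance $V^A_n$, plus a nonuniformity in the Lindeberg step that you elide (the bound $|\bar U_m|\le Cn^{-1/4}$ is uniform, but your $V^A_n(\eta_0)$ is not, so ``uniformly in $\eta_0$'' needs to be replaced by a dominated-convergence argument over $\eta_0$). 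These are real but fixable points; the decomposition is workable.

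The genuine gap is in your identification of the decisive tool. You assert that $\bP$-concentration of $V^A_n,V^B_n$ ``must'' be established by bounding annealed second moments $\E[\pi^\om_{0,n}(x,y)^2]$ via Lemma~\ref{lm:GGbar}, and that this Green's-function estimate is the crux. It is not. Lemma~\ref{lm:GGbar} is used in Section~\ref{inv} to prove $L^2$-boundedness of $f_N$ and plays no role in Section~\ref{sfdd}. Moreover, a variance/Chebyshev argument for $V^B_n$ would require controlling cross-correlations $\E\bigl[P^\om(X^x_{nt}\le\cdot)\,P^\om(X^{x'}_{nt'}\le\cdot)\bigr]$ across distinct sites $x\ne x'$, which is not what the two-walk Green's function for $q$ gives; the second-moment route is substantially harder than what is needed. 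The paper instead leans on the quenched CLT for space-time RWRE \cite{rass-sepp-05}: for $\bP$-a.e.~$\om$, $P^\om(X_{ns}\le\fl{nvs}+x\sqn)\to\mP(B_{\sigma^2 s}\le x)$, and because these are monotone functions with a continuous limit the convergence is automatically uniform in $x$, yielding $D_n(\om)\to0$ $\P$-a.s.\ as in \eqref{S1(n)} and the lemma following it. This deterministically replaces the quenched probabilities by their Gaussian limits; what remains is only the stationary sequence $\Var^\om(\eta_0(\cdot))$, whose spatial average is handled by the $L^1$ ergodic theorem. The choice \eqref{a(n)} of the cutoff $a(n)$ is tuned exactly so this works. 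So the $\bP$-concentration that you flag as the ``real work'' collapses once you invoke the quenched CLT; reaching for Lemma~\ref{lm:GGbar} here is a misdirection.
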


The remainder of the section proves this proposition and thereby Theorem \ref{fddthm}. 
  We write $\overline Y_n(\thvec) $ as a sum of independent mean zero random variables
(under $P^\om$)  so that we can apply Lindeberg-Feller \cite{durr}:
 \be \overline Y_n(\thvec) =
n^{-1/4}\sum_{i=1}^N\theta_i\big{\{}Y_{n}(t_i,r_i)-E^\om Y_{n}(t_i,r_i)\big{\}}=
 W_n=\sum_{m=-\infty}^{\infty} \bar{U}_m  \label{sum1}\ee
with 
\begin{equation}
\label{umbar}
\bar{U}_m=\, \sum_{i=1}^N \theta_i \Bigl( U_m(t_i,r_i) \,\ind\{m >0\}  
- V_m(t_i,r_i) \, \ind\{m \le 0\} \Bigr), 
\end{equation}
and 
\begin{align}
&\begin{split} U_m(t,r)&=n^{-1/4}\sum_{j=1}^{\eta_0(m)} \mathbf{1}\lbrace X^{m,j}_{\fl{nt}}
\le \fl{nvt}+\fl{r\sqn}\rbrace \\
&\qquad\qquad \qquad\qquad-n^{-1/4} E^\om(\eta_0(m)) P^\om(X^m_{\fl{nt}}\le \fl{nvt}+\fl{r\sqn}\,),\end{split} \label{Udef}\\
&\begin{split}V_m(t,r)&=n^{-1/4}\sum_{j=1}^{\eta_0(m)} \mathbf{1}\lbrace X^{m,j}_{\fl{nt}}
> \fl{nvt}+\fl{r\sqn}\rbrace \\
&\qquad\qquad \qquad\qquad  -n^{-1/4} E^\om(\eta_0(m))  P^\om(X^m_{\fl{nt}}> \fl{nvt}+\fl{r\sqn}\,). 
\end{split}\nn\end{align}
The $n$-dependence is suppressed from the notations $\bar{U}_m$, $U_m(t,r)$ and $V_m(t,r)$. 
The variables $\{\bar{U}_m \}_{m\in\bZ}$ are independent under $P^\om$
because initial occupation variables and walks are independent.   We will also use
repeatedly this formula, a consequence of the independence of $\eta_0$ and the
walks under $P^\om$:   
\be \begin{aligned}
 E^\om[ U_m(t,r)^2 ]&=
n^{-1/2} \Var^\om\biggl(\, \sum_{j=1}^{\eta_0(m)} \mathbf{1}\lbrace X^{m,j}_{\fl{nt}} \le \fl{nvt}+\fl{r\sqn}\,\rbrace
\biggr) \\
&=n^{-1/2}E^\om(\eta_0(m)) P^\om(X^m_{\fl{nt}}\le \fl{nvt}+\fl{r\sqn}\,) P^\om(X^m_{\fl{nt}}> \fl{nvt}+\fl{r\sqn}\,)\\[5pt]
&\qquad\qquad 
+ n^{-1/2}\Var^\om(\eta_0(m)) P^\om(X^m_{\fl{nt}}\le \fl{nvt}+\fl{r\sqn}\,)^2
\end{aligned}\label{varsum}\ee
and the corresponding formula for $V_m(t,r)$.  
  
Let $a(n)\nearrow\infty$ be a sequence that will be determined precisely in the proof.  
Define the finite sum   
\be   W_n^*=\sum_{\abs m\le a(n)\sqn}  \bar{U}_m . \label{sum2}\ee
We observe that the terms $\abs m> a(n)\sqn$ can be discarded from \eqref{sum1}.

\begin{lemma}   $E\abs{W_n-W_n^*}^2\to 0$ as $n\to\infty$.  \label{Slemma}\end{lemma}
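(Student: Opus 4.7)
The first step is to exploit the independence of $\{\bar U_m\}_{m\in\bZ}$ under $P^\om$ and the mean-zero property $E^\om[\bar U_m]=0$. Since $W_n-W_n^*=\sum_{\abs m>a(n)\sqn}\bar U_m$, independence gives
\[ E^\om\abs{W_n-W_n^*}^2 = \sum_{\abs m>a(n)\sqn} E^\om[\bar U_m^2]. \]
Taking expectation under $\bP$ and expanding the definition \eqref{umbar} of $\bar U_m$, by Cauchy--Schwarz it is enough to control $E[U_m(t,r)^2]$ for $m>a(n)\sqn$ and $E[V_m(t,r)^2]$ for $m<-a(n)\sqn$ for each of the finitely many pairs $(t_i,r_i)$.

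The plan for bounding these second moments is to use \eqref{varsum}. The key point is that, under $\bP$, the variable $E^\om(\eta_0(m))$ (and similarly $\Var^\om(\eta_0(m))$) depends only on $\wlev_{-\infty,-1}$ by assumption \eqref{Dass}, while $P^\om(X^m_{nt}\le \fl{nvt}+r\sqn)$ depends only on $\wlev_{0,nt-1}$. These $\sigma$-algebras are $\bP$-independent, so
\[ E\bigl[ E^\om(\eta_0(m))\, P^\om(X^m_{nt}\le \fl{nvt}+r\sqn)\bigr]
= \rho_0\, P(X^m_{nt}\le \fl{nvt}+r\sqn), \]
and analogously for the variance factor. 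Combined with the moment assumption \eqref{momass} and the definitions \eqref{Mass}, this yields a bound of the form
\[ E[U_m(t,r)^2]\le C\,n^{-1/2}\, P\bigl(X^m_{nt}\le \fl{nvt}+r\sqn\bigr), \qquad C=C(\rho_0,\qvar^2). \]

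Next, I would use that under the averaged measure $P$, the walk $X^m_\centerdot$ is the classical random walk with mean $v$ and variance $\sigma^2$ per step (since initial positions are deterministic), whose step distribution has bounded support by \eqref{step-bd}. A standard CLT/Chebyshev (or Berry--Esseen) bound then gives, uniformly in $n$ and in $m>a(n)\sqn$,
\[ P\bigl(X^m_{nt}\le \fl{nvt}+r\sqn\bigr)\le \Psi\!\left(\frac{m/\sqn - r}{\sigma\sqrt{t}}\right), \]
where $\Psi$ is a Gaussian-type tail that is integrable on $[1,\infty)$. Converting the sum over $m>a(n)\sqn$ into a Riemann sum yields
\[ \sum_{m>a(n)\sqn} E[U_m(t,r)^2] \le C\,n^{-1/2}\sum_{m>a(n)\sqn}\Psi\!\left(\tfrac{m/\sqn-r}{\sigma\sqrt t}\right)
\le C' \int_{a(n)}^{\infty}\Psi\!\left(\tfrac{x-r}{\sigma\sqrt t}\right)dx, \]
which tends to zero as $a(n)\to\infty$. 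An entirely symmetric argument using the bound on $V_m(t,r)$ handles the sum over $m<-a(n)\sqn$, via $P(X^m_{nt}>\fl{nvt}+r\sqn)$ being small when $m\ll -\sqn$.

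The result then follows by choosing $a(n)\to\infty$ sufficiently slowly (any rate $a(n)\to\infty$ suffices here; the precise rate will be dictated by later parts of the Lindeberg--Feller argument). The main obstacle is obtaining the uniform Gaussian-type tail bound on $P(X^m_{nt}\le\fl{nvt}+r\sqn)$ that is simultaneously valid for all $n$ and all $m$ in the relevant range and that, after the Riemann sum approximation, produces an integrable tail on $[a(n),\infty)$. Because the step distribution has bounded support, moment-based estimates (or a Berry--Esseen comparison with the limiting Gaussian) provide this, and the rest is bookkeeping.
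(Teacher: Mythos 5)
Your proposal follows essentially the same route as the paper: decompose $W_n - W_n^*$ using $P^\om$-independence of the $\bar U_m$, reduce to second moments of the individual $U_m(t,r)$ (and $V_m$), exploit the $\bP$-independence of $\wlev_{-\infty,-1}$ and $\wlev_{0,\infty}$ to replace the quenched probability by the averaged one, and show that $n^{-1/2}\sum_{m>a(n)\sqn} P\bigl(X^m_{nt}\le\fl{nvt}+r\sqn\bigr)\to 0$. The only divergence is the final estimate: the paper rewrites this last expression as $C\,E\bigl[\bigl((X_{nt}-\fl{nvt})/\sqn - r + a(n)\bigr)^-\bigr]$ and invokes uniform integrability of the normalized averaged walk, whereas you propose a pointwise tail bound on the averaged probability followed by a Riemann-sum comparison. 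Your Chebyshev route does work: $P\bigl(X_{nt}-nvt\le -(m-r\sqn)\bigr)\le nt\sigma^2/(m-r\sqn)^2$ gives $\Psi(u)=1/u^2$, which is integrable on $[1,\infty)$, and the conclusion follows after the Riemann-sum approximation.

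One caution about your parenthetical Berry--Esseen suggestion, though: Berry--Esseen provides an \emph{additive} error $O(n^{-1/2})$ uniformly in $x$, not a multiplicative one. Since the averaged probability is nonzero for roughly $O(nt)$ values of $m$ (the walk cannot travel farther than $Rnt$), the error-term contribution to $n^{-1/2}\sum_m$ is of order $n^{-1/2}\cdot nt\cdot n^{-1/2}=O(t)$, which does not vanish. So Berry--Esseen alone does not yield the integrable tail bound you assert. Chebyshev, a Hoeffding/large-deviation estimate (available because the steps are bounded), or the paper's uniform integrability argument are all correct ways to close the step; just drop the Berry--Esseen option.
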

\begin{proof} By the mutual independence of occupation variables and  walks under $P^\om$, 
and as eventually
$a(n)>\abs{r_i}$, the task boils down to showing that sums of this type vanish:
\begin{align*}  &E\biggl[\biggl(\, \sum_{m>a(n)\sqn}  U_m(t,r) \biggr)^2\,\biggr] 
=\bE  \sum_{m>a(n)\sqn}  E^\om[ U_m(t,r)^2 ] \\
&\le  n^{-1/2}\,  \bE  \sum_{m>a(n)\sqn}   \bigl[ E^\om(\eta_0(m))
+ \Var^\om(\eta_0(m)) \bigr]  P^\om\{ X^{m,j}_{\fl{nt}} \le \fl{nvt}+\fl{r\sqn}\,\} \\
&\le  Cn^{-1/2}   \sum_{m>a(n)\sqn}  P\{ X_{\fl{nt}} \le \fl{nvt}+\fl{r\sqn}-m\,\}\\
&=C  E\biggl[ \biggl(\frac{X_{\fl{nt}}-\fl{nvt}}{\sqn} -r+a(n)  \biggr)^-\,\biggr].
\end{align*}
Under the averaged measure $P$ the walk $X_s$ is a sum of bounded  i.i.d.\ random
variables, hence by uniform integrability the last line vanishes as $a(n)\nearrow\infty$. 
There is also a term for $m<a(n)\sqn$ involving $V_m(t,r)$ that is handled in the same way.
\end{proof} 
 
The limit $\thvec\cdot \Zvec$ in our goal \eqref{fddlim}  has variance 
\be 
\sigma_\theta^2=\sum_{1\le i,j\le N}  \theta_i\theta_j \Bigl[  
\rho_0\Gamma_1\big((t_i,r_i),(t_j,r_j)\big)+\qvar^2\Gamma_2\big((t_i,r_i),(t_j,r_j)\big) \Bigr]  
\label{sitheta}\ee 
and the two $\Gamma$-terms, defined earlier in \eqref{Ga1} and \eqref{Ga2}, 
 have the following expressions in terms of a standard 1-dimensional
Brownian motion $B_t$:
 \be \begin{aligned}
\Gamma_1\bigl((s,q),(t,r)\bigr)  
&=  \int_{-\infty}^{\infty}\Bigl(  \mP[B_{\sigma^2s}\le q-x]\mP[ B_{\sigma^2t}> r-x] \\
&\qquad\qquad\qquad- \; \mP[B_{\sigma^2s}\le  q-x, B_{\sigma^2t}> r-x] \Bigr)\,dx 
\end{aligned}\label{Ga1a}\ee
and 
 \be \begin{aligned}
\Gamma_2\bigl((s,q),(t,r)\bigr)  
&=
 \int_{0}^\infty \mP[B_{\sigma^2s}\le  q-x]\mP[ B_{\sigma^2t}\le r-x]\,dx\\
 &\qquad\qquad +\; \int_{-\infty}^{0} \mP[B_{\sigma^2s}>  q-x]\mP[ B_{\sigma^2t}> r-x]\,dx. 
\end{aligned}\label{Ga2a}\ee

By Lemma \ref{Slemma},  the desired limit \eqref{fddlim} follows from showing  
\be   E^{\om}(e^{iW_n^*}) 
 \to   e^{-\sigma_\theta^2/2} \quad 
\mbox{ in $\bP$-probability as $n\to\infty$.}\label{goal2}\ee 
 This limit will be achieved by showing that the usual conditions of the Lindeberg-Feller
theorem hold in $\P$-probability:
\be 
\sum_{\abs m\le a(n)\sqn} E^\om(\bar{U}_m^2) \to  \sigma_\theta^2 \label{LF-1}\ee
and 
\be  \sum_{\abs m\le a(n)\sqn}E^\om \big(\, \vert\bar{U}_m\vert^2 
\mathbf{1}\lbrace\vert \bar{U}_m \vert \ge \e  \rbrace \big) \to 0.    \label{LF-2}\ee
The standard Lindeberg-Feller theorem can then be applied to subsequences.
The limits   \eqref{LF-1}--\eqref{LF-2} in $\P$-probability
 imply that every subsequence has a further subsequence
 along which these limits hold for $\P$-almost every $\om$.  Thus along this
further subsequence 
  $W_n^*$ converges weakly  to $\cN(0,\sigma_\theta^2)$ under 
$P^\om$ for $\P$-almost every $\om$. 
So, every subsequence has a further subsequence
 along which the limit \eqref{goal2} holds for $\P$-almost every $\om$. 
 This implies the limit \eqref{goal2}
in $\P$-probability.

We check the negligibility condition  \eqref{LF-2} in the $L^1$ sense. 

\begin{lemma}  
Under assumption \eqref{momass}, 
\be  \lim_{n\to\infty}  \sum_{\abs m\le a(n)\sqn}E\Big[ \vert\bar{U}_m\vert^2 
\mathbf{1}\lbrace\vert \bar{U}_m \vert \ge \e  \rbrace \Big] =0.  \label{lf:tech0}\ee
\end{lemma}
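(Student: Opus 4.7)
The plan is to use a crude pointwise bound on $\abs{\bar U_m}$, combined with the moment assumption \eqref{momass} and the spatial shift invariance coming from \eqref{Dass}, and then fix the truncation level $a(n)$ retroactively so that it grows slowly enough. The previous lemma (Lemma \ref{Slemma}) only required $a(n)\nearrow\infty$, so we still have complete freedom in choosing the rate.

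\emph{Step 1 (pointwise bound).} Each indicator contributing to $U_m(t_i,r_i)$ has absolute value at most $1$, so
\[ \abs{U_m(t_i,r_i)}\le n^{-1/4}\bigl(\eta_0(m)+E^\om\eta_0(m)\bigr), \]
and the same bound holds for $\abs{V_m(t_i,r_i)}$. Writing $\xi_m=\eta_0(m)+E^\om\eta_0(m)$ and $K=2\sum_{i=1}^N\abs{\theta_i}$, we obtain $\abs{\bar U_m}\le K n^{-1/4}\xi_m$, so the event $\{\abs{\bar U_m}\ge\e\}$ is contained in $\{\xi_m\ge \e n^{1/4}/K\}$ and
\[ \abs{\bar U_m}^2\ind\{\abs{\bar U_m}\ge\e\}\le K^2 n^{-1/2}\xi_m^2\ind\{\xi_m\ge \e n^{1/4}/K\}. \]

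\emph{Step 2 (shift invariance and sum).} Under \eqref{Dass}, the identity $\mu_x^\om=\mu_0^{T_{0,x}\om}$ combined with the product structure of $\P$ implies that the joint law of $(\eta_0(m),E^\om\eta_0(m))$ under $P$ is the same for every $m\in\bZ$. Consequently $\xi_m$ has the same distribution as $\xi_0$ under $P$, so summing the bound of Step 1 yields
\[ \sum_{\abs m\le a(n)\sqn} E\bigl[\,\abs{\bar U_m}^2\ind\{\abs{\bar U_m}\ge\e\}\bigr]\le C\,a(n)\,E\bigl[\xi_0^2\ind\{\xi_0\ge \e n^{1/4}/K\}\bigr] \]
for some constant $C$ depending on $\thvec$ and all $n$ large enough (the factor $\sqn$ from the range of summation cancels against $n^{-1/2}$).

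\emph{Step 3 (moment input and DCT).} By Jensen's inequality and assumption \eqref{momass},
\[ E[\xi_0^2]\le 2E[\eta_0(0)^2]+2E\bigl[(E^\om\eta_0(0))^2\bigr]\le 4E[\eta_0(0)^2]<\infty. \]
Dominated convergence therefore gives $g(n):=E\bigl[\xi_0^2\ind\{\xi_0\ge\e n^{1/4}/K\}\bigr]\to 0$.

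\emph{Step 4 (choice of $a(n)$).} We now fix $a(n)=g(n)^{-1/2}\wedge\log n$, so that $a(n)\to\infty$ while $a(n)g(n)\to 0$. Plugging into Step 2 proves \eqref{lf:tech0}. No real obstacle arises: the only delicate point is coordinating $a(n)$ between Lemma \ref{Slemma} and the present lemma, but the former is insensitive to the rate of divergence and the latter provides arbitrary slack through $g(n)\to 0$.
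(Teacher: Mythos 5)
Your proof is correct as a standalone argument, but it takes a genuinely different and weaker route than the paper, and the coordination claim in Step~4 has a gap.

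The paper never bounds $P^\om(A_m)$ crudely by $1$: it retains the annealed walk probability and uses the concentration estimate in \eqref{lf:tech3}, namely $\sum_{0\le m\le a(n)\sqn}P(A_m)\le E[(X_{nt}-\fl{nvt}-r\sqn)^-]\le C(n^{1/2}+1)$. Because this sum is already $O(\sqn)$ \emph{uniformly in} $a(n)$, the factor $n^{-1/2}$ in \eqref{varsum} cancels it exactly and the resulting bound $C\,\E\bigl[(\cdots)\,\ind\{\eta_0(0)>n^{1/4}\e\}\bigr]$ has no $a(n)$ in it at all. The paper's proof of \eqref{lf:tech0} therefore holds for \emph{every} sequence $a(n)$ (even $a(n)=\infty$), which is what makes the organization consistent: $a(n)$ is only pinned down later, in \eqref{a(n)}, by the requirement that \eqref{tech9} vanish in the proof of \eqref{LF-1}. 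Your Step~1 replaces the quenched probability by $1$, which is wasteful: it discards the fact that for $m\gg\sqn$ almost no walker from $m$ ends up to the left of $\fl{nvt}+r\sqn$. The price is the spurious factor $a(n)$ in Step~2 that you must then kill by shrinking $a(n)$.

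The gap is in Step~4. You write that the only constraints to coordinate are $a(n)\to\infty$ (Lemma~\ref{Slemma}) and $a(n)g(n)\to0$ (your Step~2), and you set $a(n)=g(n)^{-1/2}\wedge\log n$. But $a(n)$ is a single sequence that must also make \eqref{tech9} vanish, i.e.\ $a(n)\,\E[\Var^\w(\eta_0(0))D_n(\w)]\to0$; that is the constraint which actually drives the paper's choice \eqref{a(n)}, and your $a(n)$ need not satisfy it. This is easily repaired — take the minimum of your choice and the one dictated by \eqref{a(n)} — but as written, your argument proves the lemma only for an $a(n)$ that may be incompatible with the rest of the Lindeberg--Feller verification, whereas the paper's argument is indifferent to $a(n)$ and hence imposes no extra coordination burden.
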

\begin{proof}  First
\begin{align*}
\bar{U}_m^2
&= \biggl(\;\sum_{i=1}^N \theta_i \Big[U_m(t_i,r_i)\m1\{m\ge0\}-V_m(t_i,r_i)\m1\{m<0\} \Big]
\biggr)^2\\
&\le
C \sum_{i=1}^N U_m(t_i,r_i)^2\m1\{m\ge0\}
+ C \sum_{i=1}^N V_m(t_i,r_i)^2\m1\{m<0\}.  
\end{align*}
The arguments for the terms  above are the same.  So take a term from the first sum, 
 let $(t,r)=(t_i,r_i)$, and the task is now 
\be  \lim_{n\to\infty}  
\sum_{m=0}^{a(n)\sqn}   E\bigl[U_m(t,r)^2\ind\{ \abs{\bar{U}_m} \ge \e\}\bigr] 
=0.  \label{lf:tech0.5}  \ee 
   Since 
\[\vert\bar{U}_m\vert \le C{n^{-{1}/{4}}}\bigl[ \eta_0(m) + E^\om(\eta(m))\bigr]  \] 
and by
adjusting $\e$,  limit \eqref{lf:tech0.5} follows if we can show the limit for these sums: 
\be \begin{aligned}
&\sum_{m=0}^{a(n)\sqn}   E\bigl[U_m(t,r)^2\ind\{\eta_0(m)> n^{1/4}\e\}\bigr] \\[5pt]
&\qquad + \;  \sum_{m=0}^{a(n)\sqn}  E\bigl[U_m(t,r)^2\ind\{E^\om(\eta_0(m))> n^{1/4}\e\}\bigr].
\end{aligned} \label{lf:tech1}\ee
 
Abbreviate 
\[ A_{m} =\lbrace X^{m}_{nt}\le \fl{nvt}+\fl{r\sqn}\,\rbrace.\]
 The    terms of the second sum in \eqref{lf:tech1}  develop as follows, using \eqref{varsum}, the
independence of $\wlev_{-\infty,-1}$ and $\wlev_{0,\infty}$, and the shift invariance:
\begin{align*}
&\E\bigl[ E^\om(U_m(t,r)^2) \ind\{E^\om(\eta_0(m))> n^{1/4}\e\}\bigr]\\[4pt]
&\le  n^{-1/2} \E\bigl[  \bigl(  E^\om(\eta_0(m))
+ \Var^\om(\eta_0(m))  \bigr)  \ind\{E^\om(\eta_0(m))> n^{1/4}\e\}\bigr] P(A_{m})\\[4pt]
&= n^{-1/2} \E\bigl[  \bigl(  E^\om(\eta_0(0))
+ \Var^\om(\eta_0(0))  \bigr)  \ind\{E^\om(\eta_0(0))> n^{1/4}\e\}\bigr] P(A_{m}).
\end{align*}
Since the averaged walk is a walk with bounded i.i.d.~steps, 
\be  \sum_{m=0}^{a(n)\sqn} P(A_{m}) \le E\bigl[(X_{\fl{nt}}-\fl{nvt}-\fl{r\sqrt n}\,)^-\bigr]\le C(n^{1/2}+1). 
\label{lf:tech3}\ee
Thus 
\begin{align*}
&\sum_{m=0}^{a(n)\sqn}  E\bigl[U_m(t,r)^2\ind\{E^\om(\eta_0(m))> n^{1/4}\e\}\bigr]\\
&\qquad \le 
C  \E\bigl[  \bigl(  E^\om(\eta_0(0))
+ \Var^\om(\eta_0(0))  \bigr)  \ind\{E^\om(\eta_0(0))> n^{1/4}\e\}\bigr].  \end{align*}
 The last line   vanishes as $n\to\infty$ by dominated convergence, by assumption
 \eqref{momass}. 
 
 For the first sum in \eqref{lf:tech1}   first take quenched expectation   of the walks 
while conditioning on  $\eta_0$, to get the bound
 \[  E^\om_{\eta_0}[U_m(t,r)^2] \le 2n^{-1/2} P^\om(A_{m}) \bigl[\eta_0(m)^2 
+  E^\om(\eta_0(m))^2\bigr].  \]
Using again the independence of $\wlev_{-\infty,-1}$ and $\wlev_{0,\infty}$,  shift-invariance, 
and \eqref{lf:tech3}, 
\begin{align*} &\sum_{m=0}^{a(n)\sqn}  E\bigl[U_m(t,r)^2\ind\{\eta_0(m)> n^{1/4}\e\}\bigr]\\
 &  \le Cn^{-1/2}  \sum_{m=0}^{a(n)\sqn} P(A_{m})
\cdot  E\bigl[ \bigl(\eta_0(0)^2+  E^\om(\eta_0(0))^2 \bigr) \ind\{\eta_0(0)> n^{1/4}\e\}\bigr]\\
&\le C  E\bigl[ \bigl(\eta_0(0)^2+  E^\om(\eta_0(0))^2 \bigr) \ind\{\eta_0(0)> n^{1/4}\e\}\bigr]
 \end{align*}
 The last line   vanishes as $n\to\infty$ by dominated convergence, by assumption
 \eqref{momass}. 
\end{proof} 

We turn to checking \eqref{LF-1}.  
\be \begin{aligned} 
& \sum_{\abs m\le a(n)\sqn} E^{\om}\big[\bar{U}_m^2 \big]\\
&=
\sum_{1 \le i, j \le N}   \theta_i\theta_j   \sum_{\abs m\le a(n)\sqn}
\Bigl[  \ind_{\{m>0\}}E^\w\bigl(U_m(t_i,r_i)U_m(t_j,r_j)\bigr)\\[3pt]
  &\qquad\qquad +\;   
 \ind_{\{m\le 0\}}E^\w\bigl(V_m(t_i,r_i)V_m(t_j,r_j)\bigr)\Bigr].
\nn\end{aligned}\label{vpfindim1}\ee
Each quenched expectation  of a product of two mean zero random
variables is handled in the manner of \eqref{varsum} 
 that we demonstrate with the second expectation:
\begin{align*} 
&E^\w\bigl(V_m(t_i,r_i)V_m(t_j,r_j)\bigr)\\
&= n^{-1/2}
 \Cov^\om\biggl(\, \sum_{k=1}^{\eta_0(m)} \mathbf{1}\lbrace X^{m,k}_{\fl{nt_i}} > \fl{nvt_i}+r_i\sqn\,\rbrace,
\sum_{\ell=1}^{\eta_0(m)} \mathbf{1}\lbrace X^{m,\ell}_{\fl{nt_j}} > \fl{nvt_j}+r_j\sqn\,\rbrace 
\biggr) \\
&=n^{-1/2} E^\om(\eta_0(m)) \Bigl[ P^\om(X^m_{\fl{nt_i}}> \fl{nvt_i}+r_i\sqn, \, 
X^m_{\fl{nt_j}}> \fl{nvt_j}+r_j\sqn\,)\\[4pt]
&\qquad  \qquad\qquad
 -\; P^\om(X^m_{\fl{nt_i}}> \fl{nvt_i}+r_i\sqn\,) P^\om(X^m_{\fl{nt_j}}> \fl{nvt_j}+r_j\sqn\,)\Bigr]\\[5pt]
&\qquad 
+n^{-1/2} \Var^\om(\eta_0(m)) P^\om(X^m_{\fl{nt_i}}> \fl{nvt_i}+r_i\sqn\,)
P^\om(X^m_{\fl{nt_j}}> \fl{nvt_j}+r_j\sqn\,).
 \end{align*}
 
 After some rearranging of the resulting probabilities, we arrive at 
\be  \begin{aligned}
& \sum_{\abs m\le a(n)\sqn} E^{\om}\big[\bar{U}_m^2 \big] \\
 &= n^{-1/2} \sum_{1\le i,j\le N}\theta_i\theta_j \,\biggl[ \;
 \sum_{\abs m\le a(n)\sqn}  E^\w(\eta_0(m)) \\
 &\qquad\quad \times \Bigl\{
P^\w(X^{m}_{\fl{nt_i}}\le \fl{nvt_i}+\fl{r_i\sqrt{n}}\,)P^\w( X^{m}_{\fl{nt_j}}> \fl{nvt_j}+\fl{r_j\sqrt{n}} \,)
\\
& \qquad \qquad\qquad \qquad
 -\; P^\w(X^{m}_{\fl{nt_i}}\le \fl{nvt_i}+\fl{r_i\sqrt{n}},\, X^{m}_{\fl{nt_j}}>  \fl{nvt_j}+\fl{r_j\sqrt{n}} \,)\Bigr\} 
 \\[4pt]
&+ \sum_{\abs m\le a(n)\sqn}  \Var^\w(\eta_0(m)) \\
&\qquad \quad \ \times \Bigl\{ 
 \ind_{\{m>0\}} 
 P^\w(X^{m}_{\fl{nt_i}}\le \fl{nvt_i}+\fl{r_i\sqrt{n}}\,)
   P^\w( X^{m}_{\fl{nt_j}}\le \fl{nvt_j}+\fl{r_j\sqrt{n}} \,) 
 \\[6pt]
 & \qquad \qquad  + \ind_{\{m\le 0\}} 
 P^\w(X^{m}_{\fl{nt_i}}> \fl{nvt_i}+\fl{r_i\sqrt{n}}\,)P^\w( X^{m}_{\fl{nt_j}}> \fl{nvt_j}+\fl{r_j\sqrt{n}} \,) \,\Bigr\}\;\biggr].    \end{aligned}   \label{fd6}\ee
The terms above have been arranged so that the sums match up with the integrals 
in \eqref{sitheta}--\eqref{Ga2a}.  Limit \eqref{LF-1} is now proved by showing that, term
by term, the sums above converge to the integrals.  In each case the argument is the
same.   We illustrate the case of the sum of the first term with the factor $ \Var^\w(\eta_0(m))$
in front.  To simplify notation we let $((s,q),(t,r))=((t_i,r_i),(t_j,r_j))$.
 In other words,  we show this convergence in $\bP$-probability:   
\be\begin{aligned}
&S_0(n)\equiv n^{-1/2} \sum_{ 0 < m\le a(n)\sqn}  \Var^\w(\eta_0(m))  
  P^\w(X^{m}_{\fl{ns}}\le \fl{nvs}+\fl{q\sqrt{n}}\,)\\
&\qquad \qquad \qquad \qquad\qquad \qquad
 \ \times   P^\w( X^{m}_{\fl{nt}}\le \fl{nvt}+\fl{r\sqrt{n}} \,)\\
&\underset{n\to\infty}\longrightarrow \; 
\qvar^2 \int_{0}^\infty \mP[B_{\sigma^2s}\le  q-x]\mP[ B_{\sigma^2t}\le r-x]\,dx\equiv I.
\end{aligned}\label{fdgoal6}\ee
  The proof of $S_0(n)\overset{\bP}\to I$ is divided  into two lemmas. 
Let 
\be\begin{aligned}
S_1(n)= &n^{-1/2} \sum_{ 0 < m\le a(n)\sqn}  \Var^\w(\eta_0(m))  \\
&\qquad \qquad \qquad \times
\mP\Bigl(B_{\sigma^2s}\le  q-\frac{m}{\sqn}\,\Bigr)\mP\Bigl( B_{\sigma^2t}\le r-\frac{m}{\sqn}\,\Bigr). 
 \end{aligned}\label{S1(n)}\ee

\begin{lemma}   $\ddd\lim_{n\to\infty} \E\abs{S_0(n)-S_1(n)}=0.$
\end{lemma}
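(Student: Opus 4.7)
Plan. The strategy is to use the product structure of assumption \eqref{Dass} to pull $\qvar^2$ out of the expectation, reduce the problem to an $L^1(\bP)$ comparison of a single quenched walk tail with a Gaussian tail, and then sum these comparisons via a Riemann approximation controlled by dominated convergence.

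First, by \eqref{Dass}, $\Var^\w(\eta_0(m))$ is $\kS_{-\infty,-1}$-measurable while each factor $P^\w(X^m_{nt}\le\fl{nvt}+r\sqn\,)$ is $\sigma(\wlev_{0,\infty})$-measurable; these families are $\bP$-independent, and spatial shift invariance gives $\bE\Var^\w(\eta_0(m))=\qvar^2$. Abbreviate
\[
\phi_m^\w(t,r)=P^\w(X^m_{nt}\le\fl{nvt}+r\sqn\,),\qquad
\phi_m(t,r)=\mP\bigl(B_{\sigma^2 t}\le r-m/\sqn\,\bigr).
\]
Using the elementary inequality $|ab-cd|\le|a-c|+|b-d|$ valid for $a,b,c,d\in[0,1]$ together with the above independence, the lemma reduces to
\be
I_n\equiv n^{-1/2}\!\!\sum_{0<m\le a(n)\sqn}\!\!\bE\bigl|\phi_m^\w(t,r)-\phi_m(t,r)\bigr|\longrightarrow 0
\label{redgoal2}
\ee
together with the same statement in which $(t,r)$ is replaced by $(s,q)$.

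Second, for each fixed ratio $y=m/\sqn$ I would show the integrand in \eqref{redgoal2} vanishes via the bias/variance split
\[
\bE\bigl|\phi_m^\w-\phi_m\bigr|\le\bigl|\bE\phi_m^\w-\phi_m\bigr|+\bigl(\Var_\bP\phi_m^\w\bigr)^{1/2}.
\]
The first summand tends to $0$ by the classical CLT for the averaged walk, whose steps are bounded i.i.d.\ with mean $v$ and variance $\sigma^2$. For the second summand, expand
\[
\Var_\bP\bigl(\phi_m^\w(t,r)\bigr)=P\bigl(A_m^1\cap A_m^2\bigr)-P(A_m^1)P(A_m^2),
\]
where $A_m^i=\{X^{m,i}_{nt}\le\fl{nvt}+r\sqn\,\}$ for two walks $X^{m,1},X^{m,2}$ in a common environment. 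A step-by-step covariance computation shows that only simultaneous visits to the same site contribute, yielding
$\Cov_\bP(X^{m,1}_{nt},X^{m,2}_{nt})=\alpha^2\sum_{k=0}^{nt-1}q^k(0,0)=O(\sqn)$
(with $\alpha^2=\Var_\bP\sum_z z\w_{0,0}(z)<\infty$ by boundedness of jumps and recurrence of $q$ on $\bZ$). Thus the rescaled positions $(X^{m,1}_{nt}-nvt)/\sqn$ and $(X^{m,2}_{nt}-nvt)/\sqn$ are asymptotically uncorrelated; the joint two-dimensional CLT forces independent Gaussian limits, so $P(A_m^1\cap A_m^2)$ factorizes in the limit and $\Var_\bP(\phi_m^\w)\to0$.

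Finally, to promote the pointwise convergence to \eqref{redgoal2}, rewrite the sum as a Riemann approximation of an integral over $y=m/\sqn\in(0,\infty)$ and apply dominated convergence. Azuma--Hoeffding applied to the centered bounded-step averaged walk gives
\[
P\bigl(X^m_{nt}\le\fl{nvt}+r\sqn\,\bigr)\le C\exp\!\bigl(-c(y-r)^2/t\bigr)\quad\text{for } y>|r|+1,
\]
and $\phi_m(t,r)$ admits the identical Gaussian bound directly, producing a uniform-in-$n$ integrable majorant $Ce^{-c(y-r)_+^2/t}$ on $(0,\infty)$. Pointwise vanishing of the integrand together with dominance yields \eqref{redgoal2}.

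I expect the decorrelation step $\Var_\bP(\phi_m^\w)\to0$ to be the main obstacle, since it is exactly where the two walks are coupled through the shared environment and one must invoke recurrence of the auxiliary chain $q$ (the same mechanism used in Lemma~3.3) to show that decoupling occurs on the diffusive scale. The remaining ingredients—the averaged CLT, Azuma--Hoeffding, and dominated convergence—are then routine.
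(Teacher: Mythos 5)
Your approach is a genuinely different route from the paper's and, modulo one unstated step, it is sound. The paper's proof is much shorter because it invokes the quenched central limit theorem for space-time RWRE of \cite{rass-sepp-05}, which delivers the $\P$-almost sure convergence $P^\om(X_{ns}\le \fl{nvs}+x\sqn)\to\mP(B_{\sigma^2 s}\le x)$; monotonicity upgrades this to uniformity in $x$ (a Polya-type argument), yielding a single random quantity $D_n(\om)\to 0$ a.s.\ and the bound $\E|S_0(n)-S_1(n)|\le 2a(n)\,\E[\Var^\om(\eta_0(0))D_n(\om)]$, after which $a(n)$ is chosen to grow slowly enough in \eqref{a(n)}. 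You instead bypass the quenched CLT and prove only the $L^1(\P)$ statement $\E|\phi_m^\om-\phi_m|\to 0$ directly, via an annealed-CLT bias term plus a decorrelation estimate for the $\P$-variance. What your route buys is self-containedness and flexibility: since dominated convergence with the Azuma--Hoeffding tail majorant handles the whole half-line at once, \emph{any} sequence $a(n)\nearrow\infty$ would work for this lemma, so the delicate choice in \eqref{a(n)} becomes unnecessary here. What it costs is that the decorrelation step is essentially re-deriving (a quantitative, $L^1$ version of) the fact that two walks in a common environment become asymptotically independent on the diffusive scale, which is close to the heart of the quenched CLT that the paper simply cites.

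The one place that needs more care is the sentence ``the joint two-dimensional CLT forces independent Gaussian limits.'' Vanishing of $\Cov_\P(X^{m,1}_{nt},X^{m,2}_{nt})$ alone does not by itself yield asymptotic independence; you are implicitly invoking a bivariate CLT for the pair $\bigl((X^{m,1}_{nt}-nvt)/\sqn,(X^{m,2}_{nt}-nvt)/\sqn\bigr)$. This does hold, but it requires a martingale CLT argument: the pair is a bounded-increment martingale under $P$ whose diagonal predictable brackets are deterministic ($\sigma^2 k$), and whose cross-bracket is $\alpha^2\sum_{k<nt}\ind\{Y_k=0\}$, which after division by $n$ tends to $0$ in probability by null recurrence of the $q$-chain in $d=1$ (this is where the mechanism behind $\sum_{k<n}q^k(0,0)=O(\sqn)$ actually enters, not merely as a covariance estimate). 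Spelling this out would make the argument complete; as written the step is plausible but not yet a proof.
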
 
 \begin{proof}  By the quenched central limit theorem for space-time RWRE
 \cite{rass-sepp-05},  for each $x\in\bR$ the limit 
 \[  
 P^\om (X_{\fl{ns}}\le \fl{nvs}+\fl{x\sqrt{n}}\,)  \to \mP( B_{\sigma^2s}\le x) \]
 holds for $\P$-a.e.\ $\om$.  Since these are distribution functions (monotone and 
 between $0$ and $1$) with a continuous limit  the convergence is uniform in $x$.  Set 
 \begin{align*}
 D_n(\om)=\sup_{x,y\in\bR} \; \bigl\lvert    P^\w(X_{\fl{ns}}\le \fl{nvs}+\fl{x\sqrt{n}}\,)&
  P^\w( X_{\fl{nt}}\le \fl{nvt}+\fl{y\sqrt{n}} \,) \\
  &\qquad   -\;  \mP( B_{\sigma^2s}\le x)  \mP( B_{\sigma^2t}\le y)
\, \bigr\rvert 
\end{align*}
and then $D_n(\om)\to 0$ $\P$-a.s.   By shift-invariance 
\begin{align}
&\E\abs{S_0(n)-S_1(n)} \le  n^{-1/2} \sum_{ 0 < m\le a(n)\sqn}  
\E \Var^{T_{m,0}\w}(\eta_0(0)) \nn\\
&\qquad\times  \Bigl\lvert 
  P^{T_{m,0}\w}(X_{\fl{ns}}\le \fl{nvs}+\fl{q\sqrt{n}}-m\,)  P^{T_{m,0}\w}( X_{\fl{nt}}\le \fl{nvt}+\fl{r\sqrt{n}} -m\,)\nn\\[4pt]
&\qquad\qquad \qquad\qquad- \; \mP\Bigl(B_{\sigma^2s}\le  q-\frac{m}{\sqn}\,\Bigr)
\mP\Bigl( B_{\sigma^2t}\le r-\frac{m}{\sqn}\,\Bigr)\,\Bigr\rvert\nn \\
&\le n^{-1/2} \sum_{ 0 < m\le a(n)\sqn}  
\E\bigl[ \Var^{T_{m,0}\w}(\eta_0(0)) D_n(T_{m,0}\w)\bigr]\nn \\
&\le  2a(n) \E\bigl[  \Var^{\w}(\eta_0(0)) D_n(\w)\bigr].  \label{tech9} 
\end{align}
Moment assumption \eqref{momass} and dominated convergence guarantee
that \[\E\bigl[  \Var^{\w}(\eta_0(0)) D_n(\w)\bigr]\longrightarrow 0.\]  Thus we can take 
\be
a(n)=  \Bigl( \, \sup_{k:k\ge n}  \E\bigl[ \Var^{\w}(\eta_0(0)) D_k(\w)\bigr]\,  \Bigr)^{-1/2}  
\label{a(n)}\ee
to have $a(n)\nearrow \infty$ while still line \eqref{tech9} vanishes as $n\to\infty$. 
\end{proof}

The choice of $a(n)$ made above depends on $s,t$ but that is not problematic 
since we have only finitely many time points $t_i$ to handle. 

\begin{lemma}   $\ddd\lim_{n\to\infty} \E\abs{S_1(n)-I}=0.$
 \end{lemma}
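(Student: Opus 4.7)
The plan is to decompose
\[
\E|S_1(n) - I| \le \E|S_1(n) - \E S_1(n)| + |\E S_1(n) - I|
\]
and treat each piece separately: the second as a deterministic Riemann-sum error, the first via the ergodic/mixing structure of the environment. For the deterministic term, let $g(x) = \mP(B_{\sigma^2 s}\le q-x)\mP(B_{\sigma^2 t}\le r-x)$, which is continuous, bounded by $1$, and of Gaussian decay as $x\to\infty$, hence Riemann integrable on $[0,\infty)$. Spatial shift invariance of $\P$ gives $\E \Var^\omega(\eta_0(m)) = \qvar^2$ for every $m$, so
\[
\E S_1(n) = \frac{\qvar^2}{\sqn}\sum_{0<m\le a(n)\sqn} g(m/\sqn),
\]
which converges to $\qvar^2\int_0^\infty g\,dx = I$ by a standard Riemann-sum argument on $[0,A]$ together with the tail estimate $\int_A^\infty g\,dx\to 0$ as $A\to\infty$, once $a(n)\to\infty$.

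The stochastic term is more delicate because the sequence $V_m(\omega) := \Var^{T_{0,m}\omega}(\eta_0(0))$ is stationary under $T_{0,1}$ with mean $\qvar^2$, but by \eqref{momass} only in $L^1(\P)$, so a direct variance computation for $S_1(n)$ may diverge. I will therefore truncate: set $V_m^M = V_m \wedge M$ and let $S_1^M(n)$ denote the resulting sum. Using $\frac{1}{\sqn}\sum_{0<m\le a(n)\sqn} g(m/\sqn) \le C$ uniformly in $n$, the truncation error satisfies $\E|S_1(n) - S_1^M(n)| \le C\,\E[V_0 - V_0^M]$, which tends to $0$ as $M\to\infty$ by dominated convergence, and the analogous bound holds for the centered versions. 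It then suffices to prove, for each fixed $M$, that $\E|S_1^M(n) - \E S_1^M(n)|^2 \to 0$.

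For the latter I would exploit that $V_0^M$ is bounded, hence in $L^2(\P)$, and that the spatial shift $T_{0,1}$ is mixing under the product measure $\P$, so the covariance $c_M(h) := \Cov(V_0^M, V_h^M)$ tends to $0$ as $|h|\to\infty$. The variance equals
\[
\frac{1}{n}\sum_{m,m'} g(m/\sqn)\,g(m'/\sqn)\,c_M(m'-m),
\]
which I would split at $|h|:=|m'-m|\le H$ versus $|h|>H$. Using $|c_M(h)|\le M^2$ together with $\sum_m g(m/\sqn)\,g((m+h)/\sqn) \le \sum_m g(m/\sqn) \le C\sqn$ in the first range, and using $\sum_{m,m'} g(m/\sqn)g(m'/\sqn) \le Cn$ in the second, gives a bound of the form $CHM^2/\sqn + C\sup_{|h|>H}|c_M(h)|$. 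Sending $n\to\infty$, then $H\to\infty$, and finally $M\to\infty$ closes the argument.

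The main obstacle is the weakness of the moment assumption: without $V_0 \in L^2$, a direct second-moment calculation for $S_1(n)$ is unavailable, which forces the truncation plus mixing route, with the Gaussian tail of $g$ absorbing the error uniformly in $n$. A routine point to check is that the shift $T_{0,1}$ acts mixingly on the sub-$\sigma$-algebra generated by past environment variables (where $V_0$ lives by \eqref{Dass}), which follows from the product structure of $\P$.
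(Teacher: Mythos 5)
Your proof is correct, and it takes a genuinely different route from the paper's. The paper decomposes $S_1(n)-I$ by first cutting the sum at $c\sqrt{n}$, then replacing $\Var^\omega(\eta_0(m))$ by $\qvar^2$ block by block at an intermediate scale $L$, using Lipschitz continuity of the Gaussian factors to freeze them on each block; the final step is the $L^1$ ergodic theorem applied to block averages $L^{-1}\sum_{m=1}^L \Var^\omega(\eta_0(m))$ as $L\to\infty$, after sending $n\to\infty$. That argument needs nothing beyond stationarity and ergodicity of $m\mapsto\Var^{T_{0,m}\omega}(\eta_0(0))$ and the integrability from \eqref{momass}. You instead split $S_1(n)-I$ into a centered stochastic term plus a deterministic Riemann-sum error, then handle the stochastic term via truncation $V_m^M=V_m\wedge M$ and a second-moment calculation exploiting that the spatial shift $T_{0,1}$ is a Bernoulli shift under the product measure $\P$, hence mixing, so $c_M(h)\to 0$. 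Your near/far diagonal split and the order of limits $n\to\infty$, $H\to\infty$, $M\to\infty$ are all fine, and the truncation error is controlled uniformly in $n$ by the uniform boundedness of $n^{-1/2}\sum_m g(m/\sqrt n)$. The trade-off: your route leans on the i.i.d.\ product structure of $\P$ (for mixing), whereas the paper's $L^1$-ergodic-theorem route would survive a weakening to merely ergodic environments; on the other hand your decomposition into a clean variance computation plus a Riemann sum is arguably more transparent, at the cost of the extra truncation step that the $L^1$ ergodic theorem lets the paper avoid.
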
 
 \begin{proof}  First we discard tails of the sum and integral. 
Given $\e>0$, we can choose a large enough $c<\infty$  such that 
\begin{align*}
S_1^*(n)= &n^{-1/2} \sum_{ 0 < m\le c\sqn}  \Var^\w(\eta_0(m))  \\
&\qquad \qquad \qquad \times
\mP\Bigl(B_{\sigma^2s}\le  q-\frac{m}{\sqn}\,\Bigr)\mP\Bigl( B_{\sigma^2t}\le r-\frac{m}{\sqn}\,\Bigr) 
 \end{align*}
 satisfies $\E\abs{S_1(n)-S_1^*(n)}\le \e$, and so that 
\[    I^*= \qvar^2 \int_{0}^c \mP[B_{\sigma^2s}\le  q-x]\mP[ B_{\sigma^2t}\le r-x]\,dx  \]
satisfies $I-I^*\le \e$.   
   Thus it suffices to prove 
 $S_1^*(n)\to I^*$.  
 
 Next, since the Gaussian distribution functions are Lipschitz continuous, 
 \begin{align*}  
&S_1^*(n)- I^* = n^{-1/2} \sum_{ 0 < m\le c\sqn}  
\bigl[ \Var^\w(\eta_0(m)) - \qvar^2\bigr]   \\
&\qquad \qquad   \times
\mP\Bigl(B_{\sigma^2s}\le  q-\frac{m}{\sqn}\,\Bigr)\mP\Bigl( B_{\sigma^2t}\le r-\frac{m}{\sqn}\,\Bigr) 
+ O(n^{-1/2}). 
\end{align*}
Introduce an intermediate scale $1<<L<<\sqn$ and use again  
 Lipschitz continuity of the probabilities: 
 \begin{align*}  
&S_1^*(n)- I^* =  \frac{L}{n^{1/2}} \sum_{ 0\le j \le \frac{ c\sqn}L-1  }  
\biggl( \frac1{L}  \sum_{m=jL+1}^{(j+1)L} \Var^\w(\eta_0(m)) - \qvar^2\biggr) \\[3pt]
&\quad   \times
\biggl\{ \mP\Bigl(B_{\sigma^2s}\le  q-\frac{jL}{\sqn}\,\Bigr)
\mP\Bigl( B_{\sigma^2t}\le r-\frac{jL}{\sqn}\,\Bigr) 
+O\Bigl(\frac{L}{\sqn}\Bigr) \biggr\} +\frac{R_n}{\sqn} + O(n^{-1/2}). 
\end{align*}
The error term $R_n$ consists of order $L$ terms bounded by 
$\abs{ \Var^\w(\eta_0(m)) - \qvar^2}$ that appear because the 
collection of summation
intervals $(jL,(j+1)L]$ may not exactly cover the original summation interval
$0 < m\le c\sqn$.   It satisfies $\E R_n \le CL$. 
Finally, bounding the probabilities crudely by 1 and by shift-invariance, 
 \begin{align*}  
&\E\abs{S_1^*(n)- I^*} \le  C 
\E \biggl\lvert  \frac1{L}  \sum_{m=1}^{L}  \Var^\w(\eta_0(m)) 
- \qvar^2\biggr\rvert   + O(Ln^{-1/2}).  
\end{align*}
This vanishes as we let first $n\to\infty$ and then $L\to\infty$ and apply the 
$L^1$ ergodic theorem.  
 \end{proof}
 
 Limit \eqref{fdgoal6} has now been verified.  All  terms in \eqref{fd6} are treated
 the same way to show that they converge, in $L^1(\P)$ and therefore in 
 $\P$-probability, to the corresponding  integrals in  \eqref{sitheta}--\eqref{Ga2a}. 
 This verifies limit \eqref{LF-1}. Since both \eqref{LF-1} and \eqref{LF-2} have been 
 checked,  the Gaussian limit in \eqref{goal2} has been proved, as explained in the
 paragraph following  \eqref{LF-2}.    The proof of Proposition 
 \ref{fddprop} and thereby also the proof of Theorem \ref{fddthm} are complete. \qed

\section{The quenched mean process} \label{var-covar} 

We now prove Theorems \ref{th:SHE} and \ref{th:fBM}.
We will use a simplified notation for the quenched jump probabilities:   $\w_{x,n}=\w_{x,n}(1)$ and   $\w'_{x,n}=\w_{x,n}(0)=1-\w_{x,n}(1)$.
Note that when the steps are $0$ and $1$ we have $v=p(1)=\E\w_{0,0}$.
Potential kernel $\ahom$ can be easily computed from equations \eqref{a:eqn} and seen to equal $\ahom(x)=\frac{\abs{x}}{2v(1-v)}$.
Recall that $\alpha=\bE\w_{0,0}\w'_{0,0}$. 
Then formula \eqref{beta2} gives 
	\[\beta=\frac{\alpha}{v(1-v)}\,.\]

 \begin{proof}[Proof of Theorem \ref{th:SHE}]
Define 
	\[H_n(x)=E^\w\Bigl[\sum_{y>0}\sum_{j=1}^{\eta_0(y)}\ind\{X_n^{y,j}\le x\}-\sum_{y\le0}\sum_{j=1}^{\eta_0(y)}\ind\{X_n^{y,j}> x\}\Bigr].\]
Then $Y_n(t,r)=H_{\fl{nt}}(\fl{nvt}+\fl{r\sqrt n})$.
Compute
	\begin{align*}
	H_{n+1}(x)
	&=E^\w\Bigl[\sum_{y>0}\sum_{j=1}^{\eta_0(y)}\ind\{X_n^{y,j}\le x-1\}\Bigl]+E^\w\Bigl[\sum_{y>0}\sum_{j=1}^{\eta_0(y)}\ind\{X_n^{y,j}=x\}\Bigl]\w'_{x,n}\\
	&\qquad-\sum_{y\le0}\sum_{j=1}^{\eta_0(y)}\ind\{X_n^{y,j}> x\}\Bigr]-\sum_{y\le0}\sum_{j=1}^{\eta_0(y)}\ind\{X_n^{y,j}=x\}\Bigr]\w_{x,n}.
	\end{align*}
Also,
	\begin{align*}
	&\w_{x,n} H_n(x-1)+\w'_{x,n} H_n(x)\\
	&\qquad=E^\w\Bigl[\sum_{y>0}\sum_{j=1}^{\eta_0(y)}\ind\{X_n^{y,j}\le x-1\}\Bigl]\w_{x,n}-\sum_{y\le0}\sum_{j=1}^{\eta_0(y)}\ind\{X_n^{y,j}> x-1\}\Bigr]\w_{x,n}\\
	&\qquad\qquad+E^\w\Bigl[\sum_{y>0}\sum_{j=1}^{\eta_0(y)}\ind\{X_n^{y,j}\le x\}\Bigl]\w'_{x,n}-\sum_{y\le0}\sum_{j=1}^{\eta_0(y)}\ind\{X_n^{y,j}> x\}\Bigr]\w'_{x,n}.
	\end{align*}
Taking the difference of the two expressions one finds that
	\[H_{n+1}(x)=\w_{x,n} H_n(x-1)+\w'_{x,n} H_n(x).\]
In other words, $H$ is the random average process introduced by Ferrari and Fontes \cite{ferr-font-rap}.
The initial conditions are given by
	\[H_0(x)=\begin{cases}0&\text{if }x=0,\\ \displaystyle\sum_{y=1}^x E^\w\eta_0(y)&\text{if }x>0,\quad\text{and}\\[12pt]\displaystyle\sum_{y=x+1}^0 E^\w\eta_0(y)&\text{if }x<0.\end{cases}\]
The claim now follows by applying \cite[Thm.~4.1]{sepp-10-ens} and the characterization on page 13 of \cite{sepp-10-ens}. 
(\cite[Thm.~4.1]{sepp-10-ens} as reproduced from \cite[Thm.~2.1]{bala-rass-sepp} where the limiting stochastic heat equation
  is slightly altered because the process studied was $H_{\fl{nt}}(\fl{nvt}+\fl{r\sqrt n})-H_0(\fl{r\sqrt n})$.)
\end{proof}
 
\begin{proof}[Proof of Theorem \ref{th:fBM}]
Now, we have $\beta=\alpha/(v(1-v))=4\alpha$. We will write $p_{x,y}^k$ for the $k$-step averaged transition. 
%
For $t\ge0$ define
 \begin{align*}
 Y(t)= \sum_{x>0}  \sum_{j=1}^{\eta_0(x)} \mathbf{1}\lbrace X^{x,j}_{\fl{t}}\le \fl{vt}\,\rbrace
- \sum_{x\le 0} \sum_{j=1}^{\eta_0(x)} \mathbf{1}\lbrace X^{x,j}_{\fl{t}}> \fl{vt}\,\rbrace.
\end{align*}  

By stationarity
	\[E^\w Y_n(t,0)-E^\w Y_n(s,0)=E^\w Y(nt)-E^\w Y(ns)\]
has the same distribution as the $E^\w$-mean of 
\[Y'=\sum_{x>0}  \sum_{j=1}^{\eta_0(x)} \mathbf{1}\lbrace X^{x,j}_{\fl{nt}-\fl{ns}}\le \fl{nvt}-\fl{nvs}\,\rbrace
- \sum_{x\le 0} \sum_{j=1}^{\eta_0(x)} \mathbf{1}\lbrace X^{x,j}_{\fl{nt}-\fl{ns}}> \fl{nvt}-\fl{nvs}\,\rbrace.\]
The difference $\abs{Y'-Y(\fl{nt}-\fl{ns})}$ is bounded by the number of particles that are at time $\fl{nt}-\fl{ns}$
between $\fl{nvt}-\fl{nvs}$ and $\fl{(\fl{nt}-\fl{ns})v}$. Since $\abs{\fl{nvt}-\fl{nvs}-\fl{(\fl{nt}-\fl{ns})v}}\le2$ we are talking about at most $5$ sites and,
consequently, $\E[\abs{E^\w Y'-E^\w Y(\fl{nt}-\fl{ns})}]\le 5\E[f]=5$. A similar reasoning gives a bound on $\E[\abs{Y(nt)-Y(\fl{nt})}]$ and $\E[\abs{Y(ns)-Y(\fl{ns})}]$.
Therefore,
	\begin{align*}
	&\lim_{n\to\infty}\frac1{\sqrt{n}}\Vvv\bigl(E^\w Y(\fl{nt}-\fl{ns})\bigr)
	=\lim_{n\to\infty}\frac1{\sqrt{n}}\Vvv\bigl(E^\w Y_n(t,0)-E^\w Y_n(s,0)\bigr)\\
	&=\lim_{n\to\infty}\frac1{\sqrt{n}}\Bigl[\Vvv\bigl(E^\w Y(\fl{nt})\bigr)+\Vvv\bigl(E^\w Y(\fl{ns})\bigr)-2\Cvv\bigl(E^\w Y_n(s,0),E^\w Y_n(t,0)\bigr)\Bigr].
	\end{align*}
Hence, it is enough to prove that 
\[ \lim_{n\to\infty} \frac1{\sqrt{n}} \Vvv\bigl(E^\w Y(n)\bigr)=\frac{1}{\sqrt{2\pi}}\bigl( \tfrac14 \alpha^{-1}-1\bigr). \]  
Since
	\[E^\w Y(2n+1)-E^\w Y(2n)=-f(T_{n,2n}\w)\w_{n,2n}\]
we see that it is enough to  prove the above limit along the subsequence of even integers.

Let
\begin{align*}
h(\w)=f(T_{1,0}\w)\w'_{1,0}\w'_{1,1}-f(\w)\w_{0,0}\w_{1,1}.
\end{align*}
Then $\bE(h)=0$, $\bE(h^2) = \frac1{8\alpha}-\frac12$ (here we use Corollary \ref{cor:covf} and $p_0=p_1=1/2$), and
\[E^\w Y(2n+2)-E^\w Y(2n)=h(T_{n,2n}\w).\]   
Let $c_0=\Vvv(f)=\beta^{-1}-1$.  To compute $\bE h(\w)h(T_{n,2n}\w)$ write 
\begin{align*}
h(\w)=(f(T_{1,0}\w)-1)\w'_{1,0}\w'_{1,1}-(f(\w)-1)\w_{0,0}\w_{1,1}
+ \w'_{1,0}\w'_{1,1}-\w_{0,0}\w_{1,1}
\end{align*}
and 
\begin{align*}
h(T_{n,2n}\w)&=\sum_{-n+1\le x\le n+1}(f(T_{x,0}\w)-1) \pi_{0,2n}(x,n+1)\w'_{n+1,2n}\w'_{n+1,2n+1}\\
&\qquad-\sum_{-n\le y\le n}(f(T_{y,0}\w)-1) \pi_{0,2n}(y,n)\w_{n,2n}\w_{n+1,2n+1}\\
&\qquad+ \sum_{-n+1\le x\le n+1} \pi_{0,2n}(x,n+1)\w'_{n+1,2n}\w'_{n+1,2n+1}\\
&\qquad-\sum_{-n\le y\le n}  \pi_{0,2n}(y,n)\w_{n,2n}\w_{n+1,2n+1}.  
\end{align*}
Due to $\kS_{-\infty,-1}$-measurability the $f$-terms are independent of the $\w$'s. 
Also, distinct shifts are uncorrelated by Corollary \ref{cor:covf}.  
Multiplying
these terms together and separating the expectations of the factors on levels $2n$ and 
$2n+1$  leads to 
\begin{align}
\bE h(\w)h(T_{n,2n}\w) &= \tfrac14c_0 \bE \pi_{0,2n}(1,n+1)\w'_{1,0}\w'_{1,1}  \label{t1}\\[5pt]
&\qquad- \tfrac14c_0  \bE \pi_{0,2n}(0,n+1) \w_{0,0}\w_{1,1}  \label{t2} \\[5pt]
&\qquad- \tfrac14c_0 \bE \pi_{0,2n}(1,n)\w'_{1,0}\w'_{1,1}  \label{t3}\\[5pt]
&\qquad+ \tfrac14c_0  \bE \pi_{0,2n}(0,n) \w_{0,0}\w_{1,1}  \label{t4} \\[5pt]
&\qquad+ \tfrac14\sum_{-n+1\le x\le n+1} \bE\pi_{0,2n}(x,n+1) \bigl(\w'_{1,0}\w'_{1,1}
-\w_{0,0}\w_{1,1}\bigr)  \label{t5} \\[5pt]
&\qquad-\tfrac14\sum_{-n\le y\le n}  \bE\pi_{0,2n}(y,n) \bigl(\w'_{1,0}\w'_{1,1}
-\w_{0,0}\w_{1,1}\bigr) .   \label{t6} 
\end{align}
Thinking through the possible jumps shows that the terms in \eqref{t5} and \eqref{t6} 
survive only for $x,y\in\{0,1\}$.  And some of these terms can be combined with the ones
above.  This gives 
\begin{align}
\bE h(\w)h(T_{n,2n}\w) &= \tfrac14(c_0+1) \bE \pi_{0,2n}(1,n+1)\w'_{1,0}\w'_{1,1}  \label{t2.1}\\[5pt]
&\qquad- \tfrac14(c_0+1)  \bE \pi_{0,2n}(0,n+1) \w_{0,0}\w_{1,1}  \label{t2.2} \\[5pt]
&\qquad- \tfrac14(c_0+1) \bE \pi_{0,2n}(1,n)\w'_{1,0}\w'_{1,1}  \label{t2.3}\\[5pt]
&\qquad+ \tfrac14(c_0+1)  \bE \pi_{0,2n}(0,n) \w_{0,0}\w_{1,1}  \label{t2.4} \\[5pt]
&\qquad+ \tfrac14  \bE\bigl[\pi_{0,2n}(0,n+1) \w'_{1,0}\w'_{1,1}
-\pi_{0,2n}(1,n+1)\w_{0,0}\w_{1,1} \bigr]  \label{t2.5} \\[5pt]
&\qquad-\tfrac14   \bE\bigl[\pi_{0,2n}(0,n) \w'_{1,0}\w'_{1,1}
-\pi_{0,2n}(1,n)\w_{0,0}\w_{1,1}  \bigr].   \label{t2.6} 
\end{align}
Now transform each term. For example, term \eqref{t2.1} becomes
\begin{align*}
&\text{\eqref{t2.1}} \\
&= \tfrac14(c_0+1) \bE \Bigl[\bigl(\w'_{1,0}\w'_{1,1}p_{1,n+1}^{2n-2}
+ \w'_{1,0}\w_{1,1}p_{2,n+1}^{2n-2} + \w_{1,0}\w'_{2,1}p_{2,n+1}^{2n-2} 
+\w_{1,0}\w_{1,1}p_{3,n+1}^{2n-2} \bigr)\,\w'_{1,0}\w'_{1,1} \Bigr]\\
&= \tfrac14(c_0+1)  \Bigl\{  (\tfrac12-\alpha)^2 p_{1,n+1}^{2n-2}
+ (\tfrac34\alpha-\alpha^2)  p_{2,n+1}^{2n-2} 
+ \tfrac14\alpha p_{3,n+1}^{2n-2} \Bigr\}. 
\end{align*}
After these steps we get  
\begin{align*}
&\bE h(\w)h(T_{n,2n}\w) \\
&\qquad= \tfrac14(c_0+1)\Bigl[-\tfrac14\alpha (p_{0,n+1}^{2n-2} +p_{0,n-3}^{2n-2})\\
&\qquad \qquad \qquad \qquad +(2\alpha^2-\tfrac32\alpha+\tfrac14) (p_{0,n}^{2n-2}+p_{0,n-2}^{2n-2}) 
+ (-4\alpha^2+\tfrac72\alpha-\tfrac12) p_{0,n-1}^{2n-2}  \Bigr]\\[4pt]
&\qquad\qquad\qquad\qquad +\; \tfrac14\Bigl[\tfrac1{16} (p_{0,n+1}^{2n-2} +p_{0,n-3}^{2n-2}) 
 +(\tfrac18-\tfrac12\alpha) (p_{0,n}^{2n-2}+p_{0,n-2}^{2n-2}) 
+ (\alpha-\tfrac38) p_{0,n-1}^{2n-2}  \Bigr].
\end{align*}
Letting $X_k$ denote the (averaged) Markov chain with transition $p_{x,y}$,  introduce  $Z_k=X_{2k}-k$
with transition $r_{0,0}=1/2$,  $r_{0,\pm 1}=1/4$.  For higher order transitions
$p^{2n-2}_{0,n+i}=r^{n-1}_{i+1}$.  Replace the $p$'s with $r$'s and combine them using
symmetry:  $r^{n-1}_2=r^{n-1}_{-2}$, etc. Then
\begin{align}
\bE h(\w)h(T_{n,2n}\w) &= \tfrac14(c_0+1)\Bigl[\tfrac12\alpha (r_{0}^{n-1} -r_{2}^{n-1} )
-(4\alpha^2-3\alpha+\tfrac12) (r_{0}^{n-1} -r_{1}^{n-1} )  
 \Bigr]\notag\\[4pt] 
 &\qquad +\; \tfrac14\Bigl[ -\tfrac18 (r_{0}^{n-1} -r_{2}^{n-1} )  
 + (\alpha-\tfrac14) (r_{0}^{n-1} -r_{1}^{n-1} )  \Bigr]\notag\\[4pt]
 &= \tfrac14(\tfrac12-\tfrac18 \alpha^{-1})(r_{0}^{n-1} -r_{1}^{n-1} )\label{h-cov}
\end{align}
where in the last step we used $c_0+1=(4\alpha)^{-1}$.

Use the potential kernel $a^Z$ of the $r$-walk:  the variance is $1/2$ so $a^Z(x)=2\abs{x}$. 
From \cite{spitzer} and symmetry,   $a^Z(x)=\lim_{m\to\infty} a^Z_m(x)$ with 
\begin{align}\label{aZform}
  a^Z_m(x) =    \sum_{k=0}^m (r^k_{0,0}-r^k_{x,0}) =   \sum_{k=0}^m (r^k_{0}-r^k_{x}).
\end{align}
Then
\begin{align}
\Vvv\bigl(E^\w Y(2n)\bigr)&=\Vvv\biggl[\;\sum_{k=0}^{n-1}  h(T_{k,2k}\w)  \biggr]\nn\\
&=n\bE(h^2) + 2\sum_{k=1}^{n-1} (n-k) \bE h(\w)h(T_{k,2k}\w)\nn\\
&=\bigl( \tfrac1{16}\alpha^{-1}-\tfrac14\bigr)\biggl[ 2n  
-   \sum_{k=1}^{n-1} (n-k)  (r_{0}^{k-1} -r_{1}^{k-1} )    \biggr] \nn\\
&=\bigl( \tfrac1{16}\alpha^{-1}-\tfrac14\bigr)\biggl[ 2n  
-   \sum_{j=1}^{n-1}  \sum_{k=1}^{j}   (r_{0}^{k-1} -r_{1}^{k-1} )    \biggr] \nn\\
&=\bigl( \tfrac1{16}\alpha^{-1}-\tfrac14\bigr)\biggl[ na^Z(1)  -  \sum_{j=1}^{n-1}   a^Z_{j-1}(1) 
\biggr]\biggr] \nn\\
&=\bigl( \tfrac1{16}\alpha^{-1}-\tfrac14\bigr)\biggl[ a^Z(1)  +  \sum_{j=1}^{n-1}  (a^Z(1) - a^Z_{j-1}(1) ) \biggr].
\label{var-3}\end{align}
Let us look at $a^Z_k(x)$.  The characteristic function is 
\[ \zeta(\theta)=\sum_x r_x e^{ix\theta}= \tfrac12(1+\cos\theta). \]
By symmetry:
\begin{align*}
a^Z_m(x)&= \sum_{k=0}^m (r^k_{0}-r^k_{x}) =  \sum_{k=0}^m\frac1{2\pi}\int_{-\pi}^\pi
\bigl(\zeta^k(\theta)- e^{-ix\theta} \zeta^k(\theta)\bigr)\,d\theta \\
&=\frac1{2\pi}\int_{-\pi}^\pi \frac{1-\cos x\theta}{1-\zeta(\theta)}
\bigl(1- \zeta^{m+1}(\theta)\bigr)\,d\theta \\
&=\frac2{2\pi}\int_{-\pi}^\pi \frac{1-\cos x\theta}{1-\cos\theta}
\bigl(1- \zeta^{m+1}(\theta)\bigr)\,d\theta \\
&=2\abs{x} -\frac1{\pi}\int_{-\pi}^\pi \frac{1-\cos x\theta}{1-\cos\theta}
\biggl( \frac{1+\cos\theta}2\biggr)^{m+1}\,d\theta.
\end{align*}
The value of the first  integral above is on p.~61 in \cite{spitzer}.  But actually we only need
$x=1$: 
\begin{align*}
a^Z_m(1)&= a^Z(1) -\frac1{\pi}\int_{-\pi}^\pi  
\biggl( \frac{1+\cos\theta}2\biggr)^{m+1}\,d\theta
=a^Z(1) -\frac2{\pi}\int_{-\pi/2}^{\pi/2}  
 \cos^{2m+2}x\,dx\\
&= a^Z(1) -2\prod_{\ell=1}^{m+1}  \biggl(1-\frac1{2\ell}\biggr)
= a^Z(1) -\frac{2}{(m+1)!}\prod_{\ell=1}^{m+1} (\ell-\tfrac1{2}).
\end{align*}
Put this back into \eqref{var-3}: 
\begin{align*}
\Vvv\bigl(E^\w Y(2n) \bigr)&= 
\bigl( \tfrac1{16}\alpha^{-1}-\tfrac14\bigr)\biggl[ a^Z(1)  +
  \sum_{j=1}^{n-1}  \frac{2}{j!}\prod_{\ell=1}^{j} (\ell-\tfrac1{2})    \biggr] \\
&=\bigl( \tfrac1{8}\alpha^{-1}-\tfrac12\bigr)\biggl[1  +
  \sum_{j=1}^{n-1}    \frac{j^{-1/2}}{\Gamma_j(-1/2) \cdot(-1/2)}\biggr]. 
 \end{align*}
Above we used the definition
\[   \Gamma_m(x)=\frac{m!\, m^x}{x(x+1)\dotsm(x+m)}.  \]
According to p.~461 of \cite{stro-ca},  $\Gamma_m(x)\to \Gamma(x)$
 for $x\notin\bZ_-$.  Plugging back into the above:
\begin{align*}
\Vvv\bigl(E^\w Y(2n)\bigr)&=  \bigl( \tfrac1{8}\alpha^{-1}-\tfrac12\bigr)4\sqrt n\biggl[\frac1{4\sqrt n}  +
\frac1{2\sqrt n}   \sum_{j=1}^{n-1}    \frac{j^{-1/2}}{-\Gamma_j(-1/2)}\biggr]\\
&\sim \sqrt{2n} \cdot  \sqrt 2\bigl( \tfrac14 \alpha^{-1}-1\bigr) \cdot  \frac{1}{-\Gamma(-1/2)}  \\
&=\sqrt{2n} \cdot  \sqrt 2\bigl( \tfrac14 \alpha^{-1}-1\bigr) \cdot  \frac{1}{2\sqrt \pi}.
\end{align*}
The theorem is proved. 
\end{proof}

We close this section with a remark regarding the expected limit of the quenched mean process in the stationary case.

In the setting of Theorem \ref{th:fBM} the random average process $H$ from the proof of Theorem \ref{th:SHE} has the initial profile
	\[H_0(x)=\begin{cases}0&\text{if }x=0,\\ \displaystyle\sum_{y=1}^x f(T_{y,0}\w)&\text{if }x>0,\quad\text{and}\\[12pt]\displaystyle\sum_{y=x+1}^0 E^\w f(T_{y,0}\w)&\text{if }x<0.\end{cases}\]
Thus, to extend the convergence result of Theorem \ref{th:SHE} to include the stationary case we need to prove a functional central limit theorem for the partial sums $\sum_{y=1}^x f(T_{y,0}\w)$.
 
 Finally, note that if indeed the claim of Theorem \ref{th:SHE} holds in the stationary setting of Theorem \ref{th:fBM}, then we would have
 	\[\sigma_0^2=\Vvv(f)=\frac1\beta-1=\frac1{4\alpha}-1=\frac{1/4-\alpha}{\alpha}=\frac{\rho_0^2\sigma_D^2}{\alpha}.\]
(Recall that we assumed the mean $\rho_0=1$ in Theorem \ref{th:fBM}.)
Thus one would have 
	\begin{align*}
	\mE[z(s,0)z(t,0)]
	&=\frac{\rho_0^2\sigma^2_D}{\alpha}\frac{\sigma}{\sqrt{2\pi}}(\sqrt t+\sqrt s-\sqrt{\abs{t-s}}\,)\\
	&=\frac{1}{2\sqrt{2\pi}}\bigl( \tfrac14 \alpha^{-1}-1\bigr)(\sqrt t+\sqrt s-\sqrt{\abs{t-s}}\,),
	\end{align*}
as stated in Theorem \ref{th:fBM}.

\bibliography{CurrentPaper}
\bibliographystyle{plain}
\end{document}